\documentclass{article}

 \usepackage[preprint]{arxiv}

\usepackage[utf8]{inputenc} %
\usepackage[T1]{fontenc}    %
\usepackage{hyperref}       %
\usepackage{url}            %
\usepackage{booktabs}       %
\usepackage{amsfonts}       %
\usepackage{nicefrac}       %
\usepackage{microtype}      %
\usepackage{xcolor}         %

\usepackage{bbm}
\usepackage{algorithm}
\usepackage{algorithmic}
\usepackage{comment}
\usepackage{nicematrix}
\usepackage{mathtools}
\usepackage{subfigure}
\usepackage{xcolor}
\usepackage[capitalize,noabbrev]{cleveref}
\usepackage{lastpage}
\usepackage{amsmath,amssymb}
\usepackage{amsthm}
\usepackage[table]{xcolor}
\usepackage{tikz}
\usetikzlibrary{decorations.pathreplacing,tikzmark,calc}

\newtheorem{theorem}{Theorem}
\newtheorem{lemma}[theorem]{Lemma}
\newtheorem{proposition}[theorem]{Proposition}

\theoremstyle{definition}
\newtheorem{definition}{Definition}
\newtheorem{assumption}{Assumption}
\newtheorem{condition}{Condition}

\theoremstyle{remark}
\newtheorem{remark}{Remark}

\title{Fast and Efficient Parallel Sampling Using Higher Order Langevin Dynamics}

\author{%
  Jaideep Mahajan\thanks{These authors contributed equally to this work.} \\
  Department of Statistics\\
  University of Illinois Urbana-Champaign\\
  \texttt{jaideep3@illinois.edu}
  \And
  Kaihong Zhang\footnotemark[1] \\
  Department of Statistics\\
  University of Illinois Urbana-Champaign\\
  \texttt{kaihong5@illinois.edu}
  \And
  Feng Liang\\
  Department of Statistics\\
  University of Illinois Urbana-Champaign\\
  \texttt{liangf@illinois.edu}
  \And
  Jingbo Liu\\
  Department of Statistics\\
  University of Illinois Urbana-Champaign\\
  \texttt{jingbol@illinois.edu}
}

\begin{document}

\maketitle

\begin{abstract}
We study parallel sampling from high-dimensional strongly log-concave distributions. Langevin-based samplers converge rapidly in continuous time, but their discretizations are typically sequential and often require polynomially many steps in the dimension \(d\), the target accuracy \(\varepsilon^{-1}\), or both. Picard-based parallel sampling methods reduce this sequential depth to polylogarithmic scale by solving for many time-discretization points in parallel; however, existing guarantees often require a polynomial number of processors, leading to substantial memory and gradient-evaluation costs in high dimensions.

We show that higher-order Langevin structure can reduce this parallel resource burden while preserving polylogarithmic sequential depth. Our method combines arbitrary-order Langevin dynamics with blockwise Lagrange polynomial interpolation. This sharper discretization reduces the number of parallel points required to achieve a target accuracy. Our results cover both higher-order smooth potentials and ridge-separable potentials, including models such as Bayesian logistic regression and two-layer neural networks, and improve upon the space complexity of the current literature on parallel log-concave sampling.
\end{abstract}

\section{Introduction}

Sampling from high-dimensional log-concave distributions is a central algorithmic primitive in statistics, machine learning, and scientific computing \citep{robert2004monte, marin2007bayesian, nakajima2019variational}. Formally, the goal is to generate approximate samples from a target density
\begin{equation} \label{targetfun:p*}
p^*(x) \;\propto\; \exp(-U(x)), \qquad x \in \mathbb{R}^d,
\end{equation}
where $U : \mathbb{R}^d \to \mathbb{R}$ is a smooth, strongly convex potential function. Directly computing the normalizing constant \( Z = \int_{\mathbb{R}^d} e^{-U(x)}\,dx \) is generally infeasible, since numerical integration scales exponentially with the dimension $d$. This motivates the study of sampling algorithms that avoid explicit evaluation of $Z$.

A widely used approach is based on Langevin diffusion, which admits \(p^*\) as its stationary distribution and converges exponentially fast in continuous time under mild assumptions. In practice, however, the diffusion must be discretized, and standard schemes such as the Euler-Maruyama method introduce bias that can be controlled by taking small step sizes. Consequently, sequential Langevin algorithms often require a polynomial number of iterations in the dimension \(d\), the target accuracy \(\varepsilon^{-1}\), or both. A large literature has sought to improve this dependence through sharper discretizations and structured dynamics, including overdamped methods, underdamped methods (see \cite{Chewi26Book}), and higher-order variants \citep{mou2021high}. While these methods exploit smoother trajectories and auxiliary variables to reduce discretization error, they are still typically implemented sequentially, with each update depending on the previous one.

This sequential structure is natural from the viewpoint of numerical discretization, but it can be limiting in modern large-scale computation, where parallel hardware is often readily available. In optimization and machine learning, parallelism is routinely used to accelerate expensive computations by distributing gradient evaluations, data batches, or parameter updates across multiple processors \citep{recht2011hogwild,  abadi2016tensorflow, you2017large, jain2018parallelizing}. This motivates an analogous question for sampling: can parallel computation reduce the number of sequential rounds needed to generate an accurate sample?

Indeed, recent approaches \citep{lee2018algorithmic, shen2019randomized,  anari2024fast} have shown that Picard-based discretization methods are naturally parallelizable. In particular, \citet{anari2024fast} shows that the required sequential depth can be reduced to polylogarithmic dependence on \(d\) and \(\varepsilon^{-1}\). At a high level, one takes an integration interval of length \(h=\Omega(1)\) and divides it into \(M\) smaller subintervals. Rather than advancing through these subintervals one at a time, the algorithm introduces approximate path values at the corresponding \(M\) grid points and refines them jointly using Picard iterations. During each Picard round, drift evaluations based on the current path approximation can be performed in parallel across the grid. Thus, the sequential depth is governed by the number of Picard rounds rather than by the number of subintervals. Since Picard iteration converges geometrically under suitable contraction conditions, this can lead to substantially smaller sequential depth. Moreover, because the discretization error is controlled by the "effective step size" \(h/M\), increasing \(M\) improves accuracy without increasing the number of sequential time steps as in a purely sequential method.

However, this speedup comes with an important resource cost. Each of the \(M\) processors must store the relevant state variables and evaluate gradients of the potential during each Picard iteration. In existing guarantees, \(M\) typically has polynomial dependence on \(d\) and \(\varepsilon^{-1}\). Thus, even if the sequential depth is only polylogarithmic, each Picard round can require substantial computational power and memory, especially in high-dimensional regimes. This motivates the central question of our work:

\begin{quote}
\emph{Can one retain the polylogarithmic sequential depth of parallel log-concave sampling while reducing the required parallel resources?}
\end{quote}

We answer this question affirmatively. Our contributions are as follows:

\begin{itemize}
    \item We propose a general higher-order Langevin dynamics framework of arbitrary order $K$. We develop a unified analytical framework for studying these dynamics, extending existing analyses of overdamped and underdamped Langevin algorithms to general order in the parallel sampling setting.

    \item Existing Picard-based parallel sampling schemes typically approximate the drift $\nabla U$ by a constant value over each effective step of size $h/M$. By leveraging higher-order dynamics, we replace piecewise-constant drift approximations on each effective step with Lagrange-polynomial interpolation, improving local accuracy. Consequently, our method requires fewer parallel points than prior Picard-based approaches while preserving polylogarithmic sequential depth. 
\end{itemize}

We analyze two statistically motivated regimes: higher-order smooth potentials, where smooth trajectories enable sharper discretization guarantees \citep{mou2021high}, and ridge-separable potentials, which arise in Bayesian logistic regression and two-layer neural networks \citep{mangoubi2017rapid, mangoubi2018dimensionally, lee2018algorithmic, mou2021high, chen2023does}. In the latter case, the target depends on \(x\in\mathbb{R}^d\) only through \(r\) ridge directions, often with \(r\ll d\), making it possible to obtain more efficient dimension-dependent guarantees. Table~\ref{tab:complexity-comparison} compares our bounds with prior parallel sampling results.

\begin{table}[t]
\caption{
Comparison of adaptive and space complexity across Langevin dynamics results.
In the complexity bounds, \(\widetilde O(\cdot)\) hides logarithmic factors in \(d\) and \(\varepsilon^{-1}\), while \(\widetilde{O}_{\log\log}(\cdot)\) preserves the displayed polylogarithmic dependence while suppressing secondary \(\log\log(d/\varepsilon^2)\)-type
factors when present. Throughout, constants depending only on problem parameters such as \(m,L,L_{\max}\), and \(\Lambda_{\Theta}\) are suppressed.
}
\label{tab:complexity-comparison}
\centering
\small
\setlength{\tabcolsep}{5pt}
\renewcommand{\arraystretch}{1.15}
\begin{tabular}{@{}p{0.39\linewidth}ccc@{}}
\toprule
Method & Metric & Adaptive complexity & Space complexity \\
\midrule

\citet{shen2019randomized}\\
Underdamped Langevin diffusion
& \(W_2\)
& \(\widetilde{O}_{\log\log}\!\left(\log^2\!\frac{d}{\varepsilon^2}\right)\)
& \(\widetilde O\!\left(\frac{d^{3/2}}{\varepsilon}\right)\) \\

\citet{anari2024fast}\\
Underdamped Langevin diffusion
& TV
& \(\widetilde{O}_{\log\log}\!\left(\log^2\!\frac{d}{\varepsilon^2}\right)\)
& \(\widetilde O\!\left(\frac{d^{3/2}}{\varepsilon}\right)\) \\

\citet{anari2024fast}\\
Overdamped Langevin diffusion
& KL
& \(\widetilde{O}_{\log\log}\!\left(\log^2\!\frac{d}{\varepsilon^2}\right)\)
& \(\widetilde O\!\left(\frac{d^2}{\varepsilon^2}\right)\) \\

\citet{zhou2024parallel}\\
Overdamped Langevin diffusion
& KL
& \(\widetilde O_{\log\log}\!\left(\log\!\frac{d}{\varepsilon^2}\right)\)
& \(\widetilde O\!\left(\frac{d^2}{\varepsilon^2}\right)\) \\

\citet{yu2025parallelized}\\
Underdamped Langevin diffusion
& \(W_2\)
& \(\widetilde{O}_{\log\log}\!\left(\log^2\!\frac{d}{\varepsilon^2}\right)\)
& \(\widetilde O\!\left(\frac{d^{3/2}}{\varepsilon}\right)\) \\

\rowcolor{blue!20}
Ours (Higher-order Langevin, Order \(K\))\\
Higher-order smoothness 
& \(W_2\)
& \(\widetilde{O}_{\log\log}\!\left(\log^2\!\frac{d}{\varepsilon^2}\right)\)
& \(\widetilde O\!\left(
d^{3/2}\varepsilon^{-\frac{1}{K-1}}
\right)\) \\

\rowcolor{blue!20}
Ours (Higher-order Langevin, Order \(K\))\\
Ridge-separable, \(r=O(\sqrt d)\)
& \(W_2\)
& \(\widetilde{O}_{\log\log}\!\left(\log^2\!\frac{d}{\varepsilon^2}\right)\)
& \(\widetilde O\!\left(
d^{5/4}\varepsilon^{-\frac{1}{K-1}}
\right)\) \\

\rowcolor{blue!20}
Ours (Higher-order Langevin, Order \(K\))\\
Ridge-separable, \(r=O(\log d)\)
& \(W_2\)
& \(\widetilde{O}_{\log\log}\!\left(\log^2\!\frac{d}{\varepsilon^2}\right)\)
& \(\widetilde O\!\left(
d^{1+\frac{1}{2K-2}}\varepsilon^{-\frac{1}{K-1}}
\right)\) \\

\bottomrule
\end{tabular}
\end{table}

\subsection{Related work}

The work most closely related to ours is the recent study of \citet{dang2025high}, which develops a high-order method based on Taylor expansions combined with a splitting scheme. Their algorithm constructs high-order expansions using derivatives of \(\nabla U\) up to order \(K\), and the resulting method is sequential in nature. In addition, their Condition~H2 imposes a stronger structural assumption on the potential than the assumptions used in our work.

Several recent works study parallel sampling for log-concave distributions using Picard-based discretization schemes. \citet{anari2024fast} study parallel sampling under a log-Sobolev inequality, rather than strong log-concavity, while \citet{shen2019randomized} and \citet{yu2025parallelized} consider smooth log-concave potentials in the Langevin setting. A related Picard-based approach for Hamiltonian Monte Carlo appears in \citet{lee2018algorithmic}. More recently, \citet{zhou2024parallel} focus on improving the sequential depth of parallel sampling from \(\log^2(d/\varepsilon)\) to \(\log(d/\varepsilon)\). These works motivate the use of Picard iterations as a mechanism for reducing sequential depth in sampling algorithms.

There is also a large body of work on sequential log-concave sampling \cite{dalalyan2017theoretical, cheng2018underdamped, durmus2019high, vempala2019rapid, dalalyan2020sampling, ma2021there, srinivasan2025poisson}. Splitting-based approaches \citep{mou2021high,  sanz2021wasserstein, paulin2024correction} improve discretization accuracy, but the resulting sequential complexity typically remains polynomial in \(d\), \(\varepsilon^{-1}\), or both. Another line of work develops high-accuracy samplers that achieve polylogarithmic dependence on \(\varepsilon^{-1}\), while retaining polynomial dependence on \(d\). Prominent examples include analyses of MALA \citep{dwivedi2019log, chen2020fast, wu2022minimax, altschuler2024faster}, proximal sampling methods \citep{wibisono2019proximal, chewi2020exponential, salim2020primal, jiang2021mirror, ahn2021efficient, fan2023improved}, and the algorithmic warm-start framework of \citet{altschuler2024faster}. These methods are sequential, and hence use only \(\Theta(d)\) memory per chain, so they are not directly comparable to our parallel setting; nevertheless, their total number of gradient evaluations provides a useful point of comparison.

Finally, several recent works study Picard-based or parallelized frameworks for diffusion-model sampling \citep{gupta2024faster, chen2024accelerating, li2025faster, gatmiry2026high}. This setting is different from Langevin Monte Carlo: the target score is not available as an oracle, but is instead learned or approximated from data through a trained score model. Consequently, their main sources of error include score-estimation and denoising errors, whereas our focus is on sampling from a known log-concave density given access to \(\nabla U\). These works are therefore complementary to the framework studied here.

\subsection{Organization}
The remainder of the paper is organized as follows. 
Section~\ref{sec:prelim} introduces the notation and assumptions that will be used throughout. We then develop the higher-order Langevin dynamics and present the discretization scheme in Section~\ref{Sec: Higher_Order_Langevin_Dynamics}. Our main convergence results are stated in Section~\ref{sec:main}, with a brief outline of the proof strategy. Finally, Section~\ref{sec:discussion} concludes with a discussion, and technical lemmas are deferred to the appendix.

\section{Preliminaries}
\label{sec:prelim}

\begin{definition}[Adaptive Complexity and Space Complexity]
For a sampling algorithm, we measure \emph{adaptive complexity} as the number of sequential, non-parallelizable evaluations of \(\nabla U \). We use \emph{space complexity} to describe the amount of memory required during sampling. Following prior work, space complexity is measured in words rather than bits (see \cite{chen2024accelerating}).
\end{definition}

We denote the standard Euclidean norm by $\|\cdot\|$ and the inner product 
by $\langle \cdot, \cdot \rangle$. 
To quantify higher-order derivatives, we next introduce the notion of a tensor spectral norm.

\begin{definition}[Tensor spectral norm]
For a vector–valued $k$-th order tensor $T$, we define
\begin{equation*}
\|T\|_{\mathrm{tsr}}
:= \sup_{v_1,\ldots,v_{k-1}\in S^{d-1}}
   \big\|T\cdot[v_1,\ldots,v_{k-1}]\big\|_2 ,
\end{equation*}
where $S^{d-1}$ is the unit sphere in $\mathbb{R}^d$, and 
``$\cdot$'' denotes the natural tensor contraction.
\end{definition}

\paragraph{Assumptions on $U$}
We make the following assumptions on the potential function $U$ in \eqref{targetfun:p*}.

\begin{assumption}[Strong convexity and smoothness]
\label{assump:U-strong-smooth}
There exist positive constants $m\le L$ such that
\begin{align*}
\frac{m}{2}\,\|x' - x\|^2
&\;\le\;
U(x') - U(x)
  - \langle \nabla U(x), x' - x\rangle
\;\le\;
\frac{L}{2}\,\|x' - x\|^2 ,
\qquad \forall\, x,x' \in \mathbb{R}^d .
\end{align*}
\end{assumption}

Assumption~\ref{assump:U-strong-smooth} is equivalent to $m I_d \preceq \nabla^2 U(x) \preceq L I_d$.

\begin{assumption}[Centered potential]
\label{assump:centered}
Without loss of generality, assume $\nabla U(0)=0$ and $U(0)=0$, 
which can be enforced by shifting $U$.
\end{assumption}

\paragraph{Convergence metrics and notations}
Consider an iterative algorithm that generates a random vector $\widehat{X}(nh)$ at the $n$-th step, corresponding to time $t=nh$ with step size $h$. Let $\hat{\pi}^{(n)}$ denote the law of $\hat{X}(nh)$. We study convergence $\hat{\pi}^{(n)} \to \pi$, where $\pi$ is the target measure with density $p^*$. The distance between two measures is quantified using the \emph{Wasserstein-2} distance:
\begin{equation*}
\mathcal{W}^2_2(p,q) = \inf_{\gamma \in \Gamma(p,q)}
         \int_{\mathbb{R}^{Kd} \times \mathbb{R}^{Kd}}
           \|x-y\|^2 \, d\gamma(x,y),
\end{equation*}
where $\Gamma(p,q)$ is the set of couplings of $p$ and $q$, with each coupling $\gamma$ being a joint distribution whose marginals are $p$ and $q.$

We use standard asymptotic notation to express convergence rates: for $f,g:\mathbb{R}^n \to [0,\infty)$, we write 
$f=O(g)$ if $\exists\,C>0$ with $f(x)\le C g(x)$, 
$f=\Theta(g)$ if $\exists\,c_1,c_2>0$ with $c_1 g(x) \leq f(x)\leq  c_2 g(x)$, 
and $f=\tilde{O}(g)$ if $\exists\,C_1,C_2>0$ with 
$f(x)\le C_1 g(x)\bigl(\log(1+g(x))\bigr)^{C_2}$ (correspondingly for \(\widetilde \Theta\)), all for sufficiently large $\|x\|$. Unless stated otherwise, the implicit constants may depend on fixed problem parameters such as \(m,L,L_{\max}, \Lambda_{\Theta}, K\), and on norm-equivalence constants associated with the positive definite Lyapunov matrix \(S\), all of which are introduced below.

\section{Higher-order Langevin dynamics and its discretization}
\label{Sec: Higher_Order_Langevin_Dynamics}

We begin with the family of SDEs of the form
\begin{equation}\label{eq:SDE-D+Q}
 \mathrm{d}X(t) \;=\; -(D + Q)\,\nabla H(X(t))\,\mathrm{d}t \;+\; \sqrt{2 D}\,\mathrm{d}B_t,
\end{equation}
where $D$ is a constant positive semidefinite matrix and $Q$ is a constant skew-symmetric matrix. It can be shown \citep{shi2012relation, ma2015complete} that for any such choice of $D$ and $Q$, the SDE \eqref{eq:SDE-D+Q} admits 
\[
p^*(x) \;\propto\; \exp\,(\,-H(x))
\]
as the invariant distribution. Notably, both the underdamped Langevin dynamics \citep{cheng2018underdamped} and the 
more recent third-order Langevin dynamics \citep{mou2021high} are special cases of this framework. Here we consider general $K$-th order dynamics. For arbitrary $K \ge 2$, 
we expand the ambient space as $X = (X_1, X_2, \ldots, X_K),$ where $ X_i \in \mathbb{R}^d$ and
 $X_1$ denotes the variable of interest, and define
$H(X) = U(X_1) + \tfrac{1}{2} \sum_{i=2}^K \|X_i\|^2.$ Also, introduce the matrices
\begingroup
\setlength{\arraycolsep}{5pt}
\renewcommand{\arraystretch}{0.95}
\begin{align}
D =
\bigl(\operatorname{diag}(0,\ldots,0,\gamma)_{K\times K}\otimes I_d\bigr), \quad
Q =
\begin{bmatrix}
0      & -\,1     & 0        & \cdots & 0 \\
1      & 0        & -\,\gamma & \ddots & \vdots \\
0      & \gamma   & 0        & \ddots & 0 \\
\vdots & \ddots   & \ddots   & \ddots & -\,\gamma \\
0      & \cdots   & 0        & \gamma & 0
\end{bmatrix}_{K\times K}
\otimes I_d ,\label{eq:D-and-Q}
\end{align}
\endgroup
for some $\gamma >0$. The resulting $K$-th order Langevin dynamics on $\mathbb{R}^{Kd}$ take the form
\begin{equation}
\mathrm{d} X(t) = -\,\bigl(D+Q\bigr)
   \begin{pmatrix}
     \nabla U(X_{1}(t))\\
     X_{2}(t)\\
     \vdots\\
     X_{K}(t)
   \end{pmatrix} dt + \sqrt{2D}\, \mathrm{d} B_t, 
\label{eq: high_order_langevin}
\end{equation}
and the marginal law of $X_1$ under its invariant distribution is the target measure. Thus, the framework in \eqref{eq: high_order_langevin} provides a unified formulation of higher-order Langevin dynamics, generalizing both underdamped and third-order cases.

\subsection{Discretization} \label{sec:discretization}

We briefly describe the discretization used by our parallel sampler; full details, as well as the intuition behind the construction, are given in Appendix~\ref{appendix-B}. Consider one step over \([0,h]\) for \eqref{eq: high_order_langevin}:
\begin{equation}
    dX(t)=AX(t)\,dt+g(X(t))\,dt+\sqrt{2D}\,dB_t,
    \label{eq:main-high-order-sde}
\end{equation}
where \(J := \operatorname{diag}(0,1,\ldots,1)\otimes I_d\), \(A:=-(D+Q)J\) and \(g(X):=-(e_2\otimes I_d)\nabla U(X_1)\). Let \(\Delta:=h/M\), partition \([0,h]\) into blocks
\[
B_i=[i\Delta,(i+1)\Delta],
\qquad i=0,\ldots,M-1,
\]
and place \(K-1\) equispaced nodes
\[
t_{i,j}=i\Delta+c_j\Delta,
\qquad
c_j=\frac{j-1}{K-2},
\qquad j=1,\ldots,K-1 ,
\]
with the convention \(c_1=0\) when \(K=2\). For \(t\in[0,\Delta]\), the variation-of-constants formula on block \(B_i\) gives
\begin{equation}
X(t_{i,1}+t)
=
e^{tA}X(t_{i,1})
+
\int_{t_{i,1}}^{t_{i,1}+t}
e^{(t_{i,1}+t-s)A}g(X(s))\,ds
+
W_{i,1}(t),
\label{eq:main-local-voc}
\end{equation}
where
\[
W_{i,1}(t)
:=
\int_{t_{i,1}}^{t_{i,1}+t}
e^{(t_{i,1}+t-s)A}\sqrt{2D}\,dB_s .
\]
For \(t,t'\in[0,\Delta]\),
\begin{equation}
\operatorname{Cov}\bigl(W_{i,1}(t),W_{i,1}(t')\bigr)
=
2\int_0^{t\wedge t'}
e^{(t-s)A}D e^{(t'-s)A^\top}\,ds .
\label{eq: sample_gaussian}
\end{equation}

For each block \(B_i\), we evaluate \eqref{eq:main-local-voc} at the collocation times
\(t=c_j\Delta\), replacing \(g(X(s))\) by its Lagrange interpolant on \(B_i\),
while the Gaussian increments
\(\{W_{i,1}(t_{i,j}-t_{i,1})\}_{j=1}^{K-1}\) are sampled jointly according to
the centered Gaussian law with covariance \eqref{eq: sample_gaussian}. Specifically, on each block we use the following approximation of \(g(X(s))\):
\[
P_{B_i}(s;\widehat X)
:=
\sum_{\ell=1}^{K-1}
\ell_{i,\ell}(s)\,
g\!\left(\widehat X(t_{i,\ell})\right),
\qquad s\in B_i .
\]
Freezing this interpolant at the previous Picard iterate gives the local update (see Fig.~\ref{fig:Algorithm})
\begin{equation}
\widehat X^{[k]}(t_{i,j})
=
e^{(t_{i,j}-t_{i,1})A}
\widehat X^{[k]}(t_{i,1})
+
\int_{t_{i,1}}^{t_{i,j}}
e^{(t_{i,j}-s)A}
P_{B_i}(s;\widehat X^{[k-1]})\,ds
+
W_{i,1}(t_{i,j}-t_{i,1}).
\label{eq:main-local-picard-update}
\end{equation}
Equivalently, unrolling the block recursion gives the global form
\begin{equation}
\widehat X^{[k]}(t_{i,j})
=
e^{t_{i,j}A}\widehat X(0)
+
\sum_{m=0}^{i}
\sum_{\ell=1}^{K-1}
\alpha_{i,j}^{m,\ell}
g\!\left(\widehat X^{[k-1]}(t_{m,\ell})\right)
+
\sum_{m=0}^{i}\widetilde W_{i,j}^{m},
\qquad k=1,\ldots,\nu ,
\label{eq:main-global-picard-update}
\end{equation}
where
\begin{equation}
\alpha_{i,j}^{m,\ell}
:=
\int_{B_m\cap[0,t_{i,j}]}
e^{(t_{i,j}-s)A}
\ell_{m,\ell}(s)\,ds ,
\label{eq:main-global-alpha}
\end{equation}
and
\[
\widetilde W_{i,j}^{m}
:=
e^{(t_{i,j}-t_{m,K-1})A}
W_{m,1}(t_{m,K-1}-t_{m,1}),
\quad m<i,
\qquad
\widetilde W_{i,j}^{i}
:=
W_{i,1}(t_{i,j}-t_{i,1}).
\]

For equal-length blocks and fixed reference nodes, the global weights
\(\alpha_{i,j}^{m,\ell}\) depend only on \(A\), \(\Delta\), and the relative source-target node locations, and can therefore be precomputed. The Gaussian increments are sampled independently across blocks and jointly within each block using the covariance formula above. Thus, each Picard round can be implemented in parallel over the collocation grid, as summarized in Algorithm~\ref{alg:picard_sampler}. Moreover, the drift matrix \(A\) inherits the Kronecker structure of \(D,Q\), and \(J\): it acts only through a \(K\times K\) block matrix tensored with \(I_d\). Consequently, the matrix exponential and covariance computations are carried out in this small block space and then applied coordinatewise across the \(d\) dimensions. Thus, for fixed order \(K\), each Picard round uses \(M\) parallel processors and has space complexity \(O(Md)\), coming from storing the blockwise collocation states and gradients.

\begin{figure}[t]
    \centering
    \includegraphics[width=0.9\linewidth]{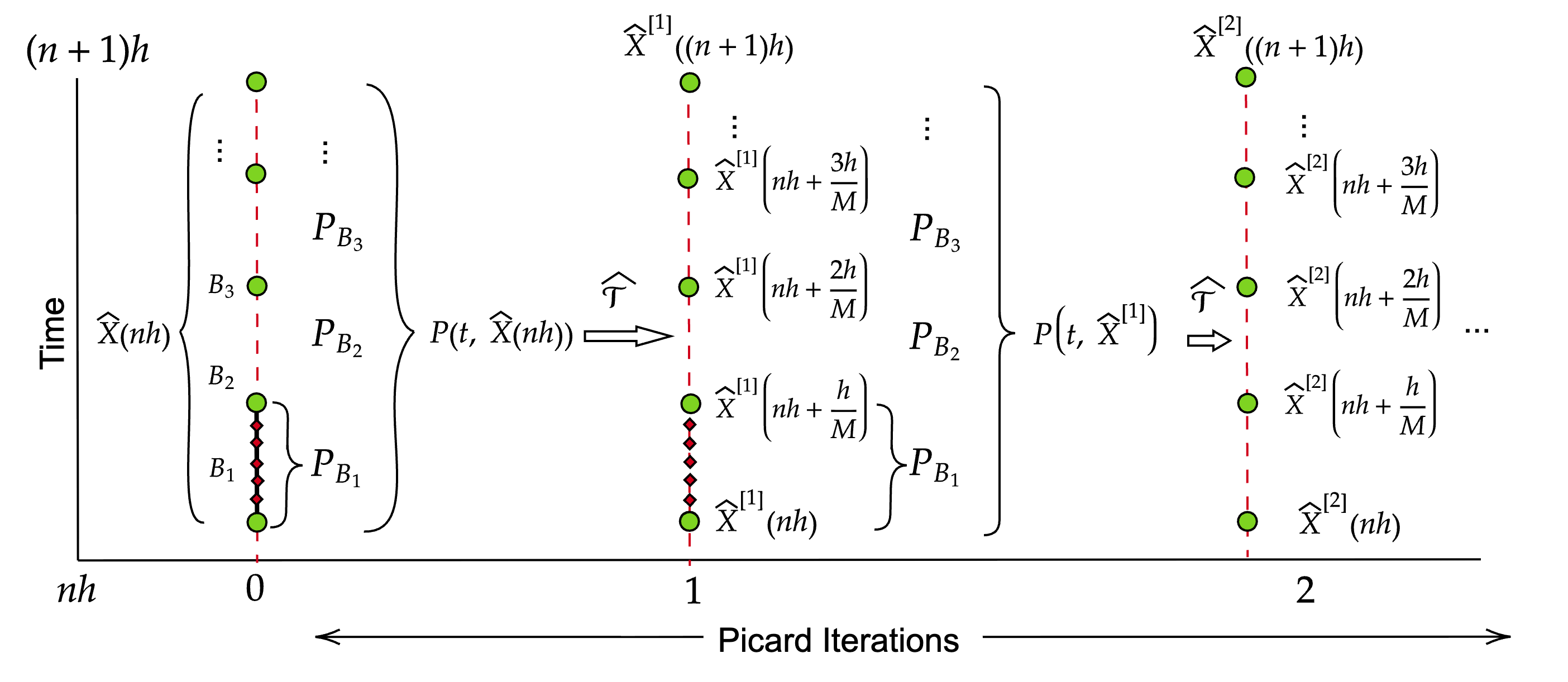}
    \caption{Schematic representation of Picard iterations. 
The procedure begins with constant values, which are used to form a 
Lagrange interpolant. Applying the update operator $\widehat {\mathcal{T}}$ at the 
interpolation nodes yields the next set of  values, which then feed 
into the following Picard iteration.}
    \label{fig:Algorithm}
\end{figure}

\begin{algorithm}[t]
\caption{Blockwise higher-order Langevin Monte Carlo}
\label{alg:picard_sampler}
\begin{algorithmic}[1]
\REQUIRE Gradient map \(\nabla U\), order \(K\), initial state \(\widehat X(0)\in\mathbb{R}^{Kd}\), number of steps \(N\), step size \(h\), number of blocks \(M\), Picard iterations \(\nu\), friction parameter \(\gamma\).
\STATE Partition \([0,h]\) into blocks \(B_i=[i h/M,(i+1)h/M]\), \(i=0,\ldots,M-1\), and place \(K-1\) collocation nodes \(t_{i,j}\) on each block.
\STATE Precompute the weights \(\alpha_{i,j}^{m,\ell}\) in \eqref{eq:main-global-alpha}.
\FOR{\(n=0,\ldots,N-1\)} 
   \FOR{\(m=0,\ldots,M-1\) \textbf{in parallel}}
    \STATE Set \(\widehat X^{[0]}(nh+t_{m,j})\gets \widehat X(nh)\) for \(j=1,\ldots,K-1\), and jointly sample \(\{W_{m,1}(t_{m,j}-t_{m,1})\}_{j=1}^{K-1}\) using \eqref{eq: sample_gaussian}.
\ENDFOR
    \FOR{\(k=1,\ldots,\nu\)}
        \FOR{\(i=0,\ldots,M-1\), \(j=1,\ldots,K-1\) \textbf{in parallel}}
            \STATE Compute \(\widehat X^{[k]}(nh+t_{i,j})\) using \eqref{eq:main-global-picard-update}.
        \ENDFOR
    \ENDFOR
    \STATE Set \(\widehat X((n+1)h)\gets \widehat X^{[\nu]}(nh+h)\).
\ENDFOR
\RETURN \(\widehat X(Nh)\).
\end{algorithmic}
\end{algorithm}

\section{Main results} \label{sec:main}

In this section, we present informal versions of our main convergence theorems for the Higher-order Langevin Monte Carlo method in Algorithm~\ref{alg:picard_sampler}. First, we state the smoothness condition used to control the blockwise Lagrange interpolation error.

\begin{assumption}[Higher–order smoothness]
\label{assump:higher-smooth}
There exist constants $L_1,\ldots,L_{K-1}$ such that
\begin{equation*}
\big\|D^i \nabla U(x)\big\|_{\mathrm{tsr}} \;\le\; L_i,
\qquad \forall\, x\in\mathbb{R}^d,\; i\le K-1,
\end{equation*}
that is, the $i$-th order derivatives of $\nabla U$ are uniformly bounded in tensor spectral norm.
\end{assumption}

The effectiveness of higher-order polynomial approximations relies on higher-order smoothness of the underlying function. Assumption~\ref{assump:higher-smooth} ensures this  by bounding the higher-order derivatives of $U$, which in turn controls the Lagrange-interpolation remainder when approximating the drift.

\begin{theorem}\label{thm:main-W2-simplified}
Assume that \(U\) satisfies Assumptions~\ref{assump:U-strong-smooth}--\ref{assump:higher-smooth}, and fix \(K \ge 2\).
Run Algorithm~\ref{alg:picard_sampler} with \(M\) parallel blocks using \(K-1\) equispaced nodes and \(\nu\) Picard iterations per step.  Then, for any target accuracy \(\varepsilon \in (0,1)\), it suffices to choose
\[
h = \Theta(1), \qquad
\nu = \Theta(\log M), \qquad
M = \widetilde{\Theta}\!\left(\sqrt{d}\,\varepsilon^{-\frac{1}{K-1}}\right), \qquad
N = \Theta\!\left(\log\!\Bigl(\frac{d}{\varepsilon^2}\Bigr)\right),
\]
to guarantee
\[
W_2^2\bigl(\hat\pi^{(N)}, \pi\bigr)
\;\le\; \varepsilon^2.
\]
\end{theorem}

\begin{remark}
Our method retains polylogarithmic adaptive complexity while improving the accuracy dependence in the parallel space complexity to \(\varepsilon^{-1/(K-1)}\), without worsening the dimension dependence compared with existing parallel samplers. See Table~\ref{tab:complexity-comparison}.
\end{remark}

\begin{remark}
For underdamped Langevin dynamics ($K=2$), the best known query complexity is 
$\widetilde O(d^{1/3}\,\varepsilon^{-2/3})$~\citep{shen2019randomized}. 
The third-order scheme of \citet{mou2021high}, under $\alpha$-th order smoothness assumptions, 
achieves $\widetilde O(d^{1/4}\,\varepsilon^{-1/2} + d^{1/2}\varepsilon^{-1/(\alpha-1)})$. 
Our general result shows that for every $K > 3$, the proposed sampler 
achieves $\varepsilon$-accuracy within
$
NM\nu = \widetilde{\Theta} \,(d^{\frac{1}{2}}\,\varepsilon^{-\frac{1}{K-1}})
$
gradient evaluations, thereby improving the dependence on the accuracy 
parameter~$\varepsilon$ compared to existing Langevin-based methods. 
The dimension exponent is suboptimal relative to the $d^{1/3}$
scaling achieved by \cite{shen2019randomized} for the underdamped Langevin dynamics.
\end{remark}

\begin{remark}
\label{rem:kappa_dependence}
The condition-number dependence in our bounds is implicit mainly in the continuous-time contraction estimate. We work with the existence of a positive definite Lyapunov matrix \(S\), rather than an explicit closed-form construction, and the constants \(C_S\) and the associated norm-equivalence factors may depend on \(\kappa=L/m\). Making this dependence explicit is an interesting direction for future work.
\end{remark}

Higher-order smoothness can be restrictive in practical high-dimensional models because it requires control of full derivative tensors in all directions. We therefore also study ridge-separable potentials, a practically important class that appears in logistic regression, generalized linear models, and many Bayesian posterior inference problems. Unlike the general higher-order smoothness assumption, which controls full derivative tensors in arbitrary directions of \(\mathbb R^d\), ridge-separable structure assumes that the nonlinear part of the potential depends on \(x\) only through projections \(\theta_k^\top x\). Thus the main nonlinear variation is concentrated along \(r\) directions \(\{\theta_k\}_{k=1}^r\), while the ambient dimension \(d\) may be much larger. In statistical settings, \(r\) often corresponds to the number of observations, so regimes with \(r\ll d\) arise naturally. This motivates the following assumption.

\begin{assumption}[Ridge-separable potential]\label{assump:ridge-separable}
Let \(\boldsymbol{\Theta}=\{\theta_1,\ldots,\theta_r\}\subset S^{d-1}\), with \(r\ll d\). We assume that
\[
U(x)=U_{\mathrm{ridge}}(x)+\frac{\eta}{2}\|x\|^2,
\qquad
U_{\mathrm{ridge}}(x)=\sum_{k=1}^r \phi_k(\theta_k^\top x),
\]
where each \(\phi_k:\mathbb R\to\mathbb R\) is \(K-1\) times continuously differentiable, and there are constants \(L_1,\ldots,L_{K-1}\), independent of \(d\) and \(r\), such that
\[
\max_{1\le k\le r}\|\phi_k^{(j)}\|_\infty < L_j,
\qquad j=1,\ldots,K-1.
\]
Moreover, let \(\Theta\in\mathbb R^{r\times d}\) be the matrix whose \(k\)-th row is \(\theta_k^\top\). We assume
\[
\lambda_{\max}(\Theta\Theta^\top)\le \Lambda_\Theta,
\]
where \(\Lambda_\Theta\) is independent of \(d\) and \(r\). 
\end{assumption}
\begin{remark}
The bounded derivative condition in Assumption~\ref{assump:ridge-separable} is mild in common examples; logistic regression, for instance, satisfies it since the derivatives of the logistic loss are uniformly bounded. The spectral condition on \(\Theta\) is also natural: Typically, \(\lambda_{\max}(\Theta\Theta^\top)\) is of the same order as the smoothness constant \(L\). In particular, when the ridge directions are sampled independently and uniformly from \(S^{d-1}\), standard random matrix concentration implies \( \lambda_{\max}(\Theta\Theta^\top)=O(1) \)
with high probability in the regime \(r\ll d\).
\end{remark}

\begin{theorem}\label{thm:main-W2-simplified-ridge}
Assume that \(U\) satisfies Assumptions~\ref{assump:U-strong-smooth}--\ref{assump:centered} and \ref{assump:ridge-separable}, and fix \(K \ge 2\).
Run Algorithm~\ref{alg:picard_sampler} with \(M\) parallel blocks using \(K-1\) equispaced nodes and \(\nu\) Picard iterations per step.  Then, for any target accuracy \(\varepsilon \in (0,1)\), it suffices to choose
\[
h = \Theta(1), \quad
\nu = \Theta(\log M), \quad
M = \widetilde{\Theta}\!\left(
\max\left\{\sqrt r,\; d^{\frac{1}{2K-2}}\right\}
\varepsilon^{-\frac{1}{K-1}}
\right), \quad
N = \Theta\!\left(\log\!\Bigl(\frac{d}{\varepsilon^2}\Bigr)\right),
\]
to guarantee
\[
W_2^2\bigl(\hat\pi^{(N)}, \pi\bigr)
\;\le\;\varepsilon^2.
\]
\end{theorem}

\begin{remark}
The processor requirement in Theorem~\ref{thm:main-W2-simplified-ridge} interpolates between the number of ridge directions and the ambient dimension. In particular, when \(r\ll d\), the dimension-dependent factor improves from the generic \(\sqrt d\) scaling to \( \max\{\sqrt r,\ d^{1/(2K-2)}\}.\) Thus, in the worst case over \(r\), the ridge-separable bound never exceeds the \(\sqrt d\) scaling, while for small \(r\) it can be substantially smaller. Table~\ref{tab:complexity-comparison} compares the space complexity for \(r=O(\sqrt d)\) and \(r=O(\log d)\). These choices also imply that the query complexity of our sampler is \(\widetilde{\Theta}\!\left(d^{1/4}\varepsilon^{-\frac{1}{K-1}}\right)\) and \(\widetilde{\Theta}\!\left(d^{\frac{1}{2K-2}}\varepsilon^{-\frac{1}{K-1}}\right)\), respectively.
\end{remark}

\paragraph{Proof overview}
\label{sec:proof_outline}

We give a short overview of the proof of Theorem~\ref{thm:main-W2-simplified}. The proof of the ridge-separable result, Theorem~\ref{thm:main-W2-simplified-ridge}, follows the same decomposition, with only the interpolation-error bound replaced by its ridge-separable analogue. The proof is based on a path-space operator view. For an initial point \(y\), let
\(\mathcal T_y\) denote the map whose fixed point is the exact higher-order
Langevin path on \([0,h]\). In other words, \(X_y^\ast=\mathcal T_y[X_y^\ast]\)
is simply the continuous-time solution initialized at \(y\). Similarly, let
\(\widehat{\mathcal T}_y\) denote the discretized operator obtained by replacing
\(\nabla U\) with its blockwise Lagrange interpolant, and write
\(\widehat X_y^\ast=\widehat{\mathcal T}_y[\widehat X_y^\ast]\) for its fixed
point. The Picard iterations used in Algorithm~\ref{alg:picard_sampler} are precisely
iterations of \(\widehat{\mathcal T}_y\). Starting from
\(y=\widehat X(nh)\), the algorithm runs \(\nu\) Picard steps and outputs
\[
\widehat X((n+1)h)=\widehat X_y^{[\nu]}(h).
\]
Appendix~\ref{Appendix-F2} shows that this operator formulation is equivalent to
Algorithm~\ref{alg:picard_sampler}.

Let \(E_n:=\mathbb E\|X(nh)-\widehat X(nh)\|_S^2\), where \(S\succ0\) is the Lyapunov matrix used for the continuous-time contraction. Under a synchronous coupling and with \(X(0)\sim\pi\), where \(\pi\) denotes the target distribution with density \(p^\ast\), we have
\[
W_2^2(\pi,\widehat \pi^{(N)})
\le
\|S^{-1}\|_{\mathrm{op}}\,E_N .
\]
The key step is a one-step decomposition. Adding and subtracting the exact and discretized fixed-point paths initialized at \(\widehat X(nh)\) gives

\begin{align}
E_{n+1}
\leq&
(1+c h)\,
\underbrace{
\mathbb E\|X^\ast_{X(nh)}(h)-X^\ast_{\widehat X(nh)}(h)\|_S^2
}_{\text{(I) continuous-time contraction}}
+
\left(2+\frac{2}{c h}\right)
\underbrace{
\mathbb E\|X^\ast_{\widehat X(nh)}(h)-\widehat X^\ast_{\widehat X(nh)}(h)\|_S^2
}_{\text{(II) interpolation error}}
\nonumber\\
&+
\left(2+\frac{2}{c h}\right)
\underbrace{
\mathbb E\|\widehat X^\ast_{\widehat X(nh)}(h)-\widehat X_{\widehat X(nh)}^{[\nu]}(h)\|_S^2
}_{\text{(III) Picard error}},
\label{eq:one-step-ED}
\end{align}
for some constant \(c>0\). Each term is controlled separately. Term (I) is bounded using exponential contraction of the higher-order Langevin dynamics in the \(S\)-norm:
\[
\mathbb E\|X^\ast_{X(nh)}(h)-X^\ast_{\widehat X(nh)}(h)\|_S^2
\le e^{-2C_S h}E_n .
\]
Term (II) is controlled by the blockwise Lagrange interpolation remainder. The error is governed by
\[
I_U
:=
\sup_{s\in[nh,(n+1)h]}
\|\nabla U(\widehat X^\ast_1(s))-P(s;\widehat X^\ast_1)\|^2,
\]
and the higher-order smoothness assumption yields
\[
\mathbb E I_U \le C_{\mathrm{IR}}\,\frac{N h^{2K-2}}{M^{2K-2}}\,d^{K-1}.
\]
Term (III) is controlled by the contraction of the discretized Picard operator:
\[
\|\widehat{\mathcal T}_y[X]-\widehat{\mathcal T}_y[Y]\|_\infty
\le \rho\|X-Y\|_\infty,
\qquad \rho=O(h)<1,
\]
which gives a geometric Picard error of order
\[
\mathbb E\sup_{t\in[0,h]}
\|\widehat X^\ast(t)-\widehat X^{[\nu]}(t)\|^2
\lesssim
\frac{\rho^{2\nu}}{(1-\rho)^2}(h^2E_n+dh).
\]

Combining the three estimates yields a one-step recursion for \(E_n\). Solving this recursion over \(n=0,\ldots,N\), and then choosing \(h\), \(M\), \(N\), and \(\nu\) so that the contraction, interpolation, and Picard errors are each at most \(\varepsilon^2/3\), proves Theorem~\ref{thm:main-W2-simplified}.

\section{Discussion}
\label{sec:discussion}

We introduced a blockwise higher-order Langevin Monte Carlo method that combines
arbitrary-order Langevin dynamics with Picard iteration and Lagrange
interpolation. Our results show that one can retain polylogarithmic adaptive
complexity while reducing the number of parallel blocks required by existing
Picard-based sampling schemes. The improvement comes from exploiting
higher-order structure to obtain sharper local discretization error on each
effective step. Our analysis applies both to general higher-order smooth potentials and to
ridge-separable potentials. In the ridge-separable case, the bounds can exploit
the number of ridge directions \(r\), yielding sharper dimension dependence
when \(r\ll d\). This suggests that parallel higher-order Langevin methods may
be especially useful for structured statistical models, including Bayesian
generalized linear models and related posterior sampling problems.

There are several limitations and trade-offs. First, while the generic theorem requires ambient higher-order smoothness, our ridge-separable result shows that this requirement can be weakened for structured potentials. Extending this type of structure-aware analysis beyond ridge-separable models remains an important direction for future work. Second, the condition-number dependence in our bounds is implicit through the continuous-time contraction estimate and the Lyapunov matrix \(S\). Making this dependence explicit is an interesting direction for future work.

\newpage
\bibliographystyle{apalike}
\bibliography{ref}

\newpage

\appendix

\section*{Appendix roadmap}
\label{app:roadmap}

The appendix is organized as follows. Appendix~\ref{appendix-A} recalls the
continuous and discretized operators used throughout the analysis.
Appendix~\ref{appendix-B} gives additional details for
Algorithm~\ref{alg:picard_sampler}, including the blockwise collocation system,
the Gaussian increments, and the equivalence between the algorithm and Picard
iterates of the discretized operator. Appendix~\ref{Appendix-C} proves the
continuous-time \(S\)-norm contraction. Appendix~\ref{Appendix-D} proves the
main convergence theorems by combining the one-step error decomposition with
the continuous-time contraction, interpolation-error, and Picard-error bounds.
Appendix~\ref{Appendix-E} contains the technical estimates needed for the
blockwise interpolation error. Appendix~\ref{Appendix-F} proves the existence and contraction of
the Picard fixed point and controls the finite-iteration error. Appendix~\ref{Appendix-G} provides the technical bounds used in
Appendix~\ref{Appendix-E} for the interpolation error, including derivative and
moment bounds for the fixed-point path. Appendix~\ref{Appendix-H} presents the ridge-separable refinement, which improves
the interpolation term while reusing the same continuous-time contraction and
Picard arguments. Finally, Appendix~\ref{Appendix-I} collects auxiliary lemmas
used throughout the proof.

\section{Settings and notations}
\label{appendix-A}
We begin by defining the main operators used throughout the appendix.

\paragraph{Continuous time operator} \(\mathcal T_y:\mathcal{C}\!\left([0,h],\mathbb{R}^{Kd}\right)\to \mathcal{C}\!\left([0,h],\mathbb{R}^{Kd}\right)\):
\begin{equation}
    \big(\mathcal{T}_y [X]\big)(t)
        := y - \int_{0}^{t} (D+Q)\,\nabla H\big(X(s)\big)\,\mathrm{d} s
        +
        \int_{0}^{t} \sqrt{2D}\,\mathrm{d} B_s,
        \quad 0 \le t \le h . \label{eq:operator_T}
\end{equation}

\paragraph{Discretized operator} \(\widehat{\mathcal T}_y:\mathcal{C}\!\left([0,h],\mathbb{R}^{Kd}\right)\to \mathcal{C}\!\left([0,h],\mathbb{R}^{Kd}\right)\), where for given process $X$, $\widehat{\mathcal T}_y[X]$ is the solution $\tilde{X}$ to the following SDE:
\begin{equation}
\tilde{X}(t)
        = y + \int_{0}^{t} A\,\tilde{X}(s)\,ds \;-\; \int_{0}^{t} P(s;X)\,ds \;+\; \int_{0}^{t} \sqrt{2D}\,dB_s,
        \label{eq: operator_hat_T2}
\end{equation}
where the blockwise Lagrange interpolant \(P(s;X)\) is defined by
\[
P(s;X)
:=
\sum_{i=0}^{M-1}\mathbf 1_{\{s\in B_i\}}\,P_{B_i}(s;X),
\]
with
\[
P_{B_i}(s;X)
:=
\sum_{\ell=1}^{K-1}
\ell_{i,\ell}(s)\,
g\bigl(X(t_{i,\ell})\bigr),
\qquad s\in B_i ,
\]
$J:=\mathrm{diag}(0,1,\dots,1)\otimes I_d\in\mathbb{R}^{Kd\times Kd}$, $e_2=(0,1,0,\ldots,0)^\top\in\mathbb{R}^{K}$, $A\;:=\;-\,(D+Q)\,J$, and \(g(X):=-(e_2\otimes I_d)\nabla U(X_1)\). Moreover, we denote the fixed point of $\mathcal{T}_y$ and $\widehat {\mathcal{T}}_y$ by $X^\ast_y$ and $\widehat X^\ast_y$ correspondingly.

\paragraph{Picard iterates}
Given \(y\), define the Picard sequence on \([0,h]\) by
\[
\widehat X_y^{[0]}(t)\equiv y,\quad
\widehat X_y^{[\nu+1]} := \widehat{\mathcal T}_y\big[\widehat X_y^{[\nu]}\big],\ \nu\ge0.
\]
Starting from \(y=\widehat X(nh)\), Algorithm~\ref{alg:picard_sampler} outputs the next state by evaluating the \(\nu\)-th Picard iterate at the end of the step:
\(\widehat X((n{+}1)h)=\widehat X_y^{[\nu]}(h).\)

\paragraph{Additional notations}
For either Assumption~\ref{assump:higher-smooth} or Assumption~\ref{assump:ridge-separable}, define
\[
L_{\max}:=\max\left\{1,\max_{1\le i\le K-1}L_i\right\}.
\]

Unless otherwise indicated, we write \(a \lesssim b\) to mean that there exists a constant \(C>0\) such that \(a\le Cb\). The constant \(C\) may depend on the fixed problem parameters \(\gamma,m,L,L_{\max}, \Lambda_{\Theta},K\) and the norm-equivalence constants associated with \(S\), but is independent of
\(r, d,N,M,\nu,\varepsilon\), and the step size \(h\). Here, \(N\) is the total number of steps in
Algorithm~\ref{alg:picard_sampler}, \(M\) is the number of blocks, \(\nu\) is
the number of Picard iterations, \(h\) is the step size, and \(d\) is the
dimension of the target distribution \(\pi\propto e^{-U(x)}\). In the
ridge-separable setting, \(\Theta\in\mathbb R^{r\times d}\) denotes the data
matrix with rows \(\theta_1^\top,\ldots,\theta_r^\top\), and \(
\Lambda_\Theta:=\lambda_{\max}(\Theta\Theta^\top).
\)

For a vector \(x \in \mathbb{R}^d\), we use \(\|x\|\) and $\|x\|_2$ to denote its Euclidean norm. 
For a matrix \(A \in \mathbb{R}^{M\times N}\), we use \(\|A\|\) to denote the operator norm induced by the Euclidean norm, i.e.
$\|A\| = \sup_{x \neq 0} \frac{\|Ax\|}{\|x\|}$. For any positive semidefinite matrix \(S\) and vector \(x \in \mathbb{R}^d\), we define the $S$-norm as follows:
$\|x\|_S := \sqrt{x^\top S x}$.
For a matrix \(A\), we denote by \(\lambda_{\min}(A)\), \(\lambda_{\max}(A)\), and \(\lambda(A)\) its smallest eigenvalue, largest eigenvalue, and the set of all its eigenvalues, respectively.
The symbols \(\preceq\) and \(\succeq\) denote the Loewner order between matrices, 
and \(\prec\) and \(\succ\) denote the corresponding strict inequalities.

For two probability distributions \(p\) and \(q\), we denote by \(\Gamma(p, q)\) the set of all their couplings, i.e., the set of joint distributions on the product space whose marginals are \(p\) and \(q\), respectively.
For a complex number \(x \in \mathbb{C}\), \(\mathrm{Re}(x)\) denotes its real part.

For the Lagrange basis $\{\tilde\ell_j\}_{j=1}^{K-1}$ defined on interval $[0,1]$, we define the standard Lebesgue constant
\[
\Gamma_\phi \;:=\; \sup_{\tau\in[0,1]}\sum_{j=1}^{K-1} \big|\tilde\ell_j(\tau)\big|.
\]

\section{Additional details of Algorithm~\ref{alg:picard_sampler}}
\label{appendix-B}

In this section, we analyze a single update of the algorithm. For simplicity, we focus on the step over the interval \([0,h]\). Recall that the higher-order Langevin dynamics \eqref{eq:main-high-order-sde} can be written as
\begin{align}
    dX(t) \;=\; A\,X(t)\,dt \;+\; g(X(t))\,dt \;+\; \sqrt{2\,D}\,dB_t.\; \label{eq-appendix:high_order_langevin2}
\end{align}
We partition the interval $[0,h]$ into $M$ blocks
\[
B_i := \Bigl[\frac{ih}{M},\, \frac{(i+1)h}{M}\Bigr], \qquad i=0,\dots,M-1,
\]
and on each block $B_i$ we place $K-1$ equispaced collocation nodes
\[
t_{i,j} := \frac{ih}{M} + \frac{j-1}{K-2}\frac{h}{M}, 
\qquad j=1,\dots,K-1.
\]

For \(t\in[0,h/M]\), the variation-of-constants formula on the block \(B_i\) starting at
\(t_{i,1}\) gives
\begin{equation}
 X(t_{i,1}+t)
=
e^{tA}X(t_{i,1})
+
\int_{t_{i,1}}^{t_{i,1} +t} e^{(t_{i,1} +t-s)A}g\bigl(X(s)\bigr)\,ds
+
W_{i,1}(t),   
\label{eq: alg_total_variation}
\end{equation}
where
\[
W_{i,1}(t)
:=
\int_{t_{i,1}}^{t_{i,1}+t}
e^{(t_{i,1}+t-s)A}\sqrt{2D}\,dB_s .
\]
Then \(W_{i,1}(t)\) is a centered Gaussian random vector. More generally, for \(t,t'\in[0,h/M]\),
\[
\operatorname{Cov}\bigl(W_{i,1}(t),W_{i,1}(t')\bigr)
=
2\int_0^{t\wedge t'}
e^{(t-s)A}D e^{(t'-s)A^\top}\,ds .
\]

We approximate the nonlinear term \(g\) by a blockwise Lagrange interpolant. 
On each block \(B_i\), let \(\{\ell_{i,j}\}_{j=1}^{K-1}\) be the Lagrange basis
associated with the nodes \(\{t_{i,j}\}_{j=1}^{K-1}\). Thus, for \(s\in B_i\),
\[
\ell_{i,j}(s)
:=
\prod_{\substack{m=1\\m\neq j}}^{K-1}
\frac{s-t_{i,m}}{t_{i,j}-t_{i,m}} .
\]
Then, for \( s \in B_{i}\), we approximate
\[
g(X(s))
\approx P_{B_i}(s, X) = 
\sum_{j=1}^{K-1}
\ell_{i,j}(s)\,g\bigl(X(t_{i,j})\bigr),
\qquad s\in B_i.
\]

Plugging the blockwise interpolant \(P_{B_i}\) into
\eqref{eq: alg_total_variation}, we obtain the discretized path
\[
 \widehat X(t_{i,1}+t)
=
e^{tA} \widehat X(t_{i,1})
+
\int_{t_{i,1}}^{t_{i,1} +t} e^{(t_{i,1} +t-s)A}\, P_{B_i}(s, \widehat X) \,ds
+
W_{i,1}(t), \qquad t \in [0, h/M].
\]
This equation is still implicit. Indeed, the interpolant
\(P_{B_i}(\widehat X,s)\) is constructed from the unknown collocation values
\(\{\widehat X(t_{i,j})\}_{j=1}^{K-1}\) on the same block:
\[
P_{B_i}(s, \widehat X)
=
\sum_{j=1}^{K-1}
\ell_{i,j}(s)\,
g\bigl(\widehat X(t_{i,j})\bigr).
\]
Thus, evaluating the discretized path at the collocation nodes
\(t=t_{i,j}-t_{i,1}\) gives a nonlinear system for the unknown stage values
\(\{\widehat X(t_{i,j})\}_{j=1}^{K-1}\). This is the fixed-point system induced
by the blockwise Lagrange discretization.

We solve this system by Picard iteration. Starting from an initial guess for the
collocation values, we freeze the nonlinear term \(P_{B_i}\) at the previous
Picard iterate and solve the resulting linear variation-of-constants equation.
Repeating this procedure yields a sequence of approximations that converges to
the fixed point under the contraction condition established later.

More formally, we initialize the algorithm at step $n$ as follows:
\[
\widehat X^{[0]}(nh + t_{i,j}) := \widehat X(nh),
\qquad i=0,\dots,M-1,\;\; j=1,\dots,K-1,
\]
and iterate for $k=1,\dots,\nu$ for block \(B_i\):
\begin{align*}
\widehat X^{[k]}(nh + t_{i,j})
&=
e^{(t_{i,j}-t_{i,1})A} \widehat X^{[k]}(nh + t_{i, 1})
+
\int_{t_{i,1}}^{t_{i,j}} e^{(t_{i,j}-s)A}\, P_{B_i}(s, \widehat X^{[k-1]}) \,ds\,\\
& \qquad+
W_{i,1}(t_{i,j}-t_{i,1}).
\end{align*}

Expanding the blockwise interpolant,
\[
P_{B_i}\bigl(s, \widehat X^{[k-1]}\bigr)
=
\sum_{\ell=1}^{K-1}
\ell_{i,\ell}(s)\,
g\!\left(\widehat X^{[k-1]}(nh+t_{i,\ell})\right),
\qquad s\in B_i.
\]
Therefore,
\[
\int_{t_{i,1}}^{t_{i,j}}
e^{(t_{i,j}-s)A}
P_{B_i}\bigl(\widehat X^{[k-1]},s\bigr)\,ds
=
\sum_{\ell=1}^{K-1}
\alpha_{j\ell}^{(i)}
g\!\left(\widehat X^{[k-1]}(nh+t_{i,\ell})\right),
\]
where
\[
\alpha_{j\ell}^{(i)}
:=
\int_{t_{i,1}}^{t_{i,j}}
e^{(t_{i,j}-s)A}\ell_{i,\ell}(s)\,ds.
\]
Thus the Picard update can be written as
\begin{equation}
  \widehat X^{[k]}(nh+t_{i,j})
=
e^{(t_{i,j}-t_{i,1})A}
\widehat X^{[k]}(nh+t_{i,1})
+
\sum_{\ell=1}^{K-1}
\alpha_{j\ell}^{(i)}
g\!\left(\widehat X^{[k-1]}(nh+t_{i,\ell})\right)
+
W_{i,1}(t_{i,j}-t_{i,1}).  
\label{eq:algo_final_expression}
\end{equation}

After \(\nu\) Picard iterations, we set the numerical value at the end of the step to be the \(\nu\)-th Picard iterate evaluated at the final node:
\[\widehat X((n+1)h):=\widehat X^{[\nu]}(nh+h).
\]
Note that the weights \(\alpha_{j\ell}^{(i)}\) in \eqref{eq:algo_final_expression}
depend only on the matrix \(A\), the block
length \(\Delta:=h/M\), and the relative locations of the collocation nodes. Indeed, writing
\[
    c_j:=\frac{j-1}{K-2},\qquad t_{i,j}=i\Delta+c_j\Delta,
\]
and using the change of variables \(s=i\Delta+\Delta u\), the local Lagrange basis satisfies
\[
    \ell_{i,\ell}(s)=\widetilde\ell_\ell(u),
\]
where \(\widetilde\ell_\ell\) is the corresponding reference basis on \([0,1]\). Hence
\[
    \alpha_{j\ell}^{(i)}
    =
    \int_{t_{i,1}}^{t_{i,j}} e^{(t_{i,j}-s)A}\ell_{i,\ell}(s)\,ds
    =
    \Delta
    \int_0^{c_j}
    e^{\Delta(c_j-u)A}\widetilde\ell_\ell(u)\,du .
\]
Thus, for equal-length blocks with the same relative collocation nodes, the coefficients are
independent of the block index \(i\). We may write \(\alpha_{j\ell}^{(i)}=\alpha_{j\ell}\), and
these reference-block weights can be precomputed before running the Picard iterations.

Moreover, for each block \(B_i\), the Gaussian vector
\[
\bigl(W_{i,1}(t_{i,1}-t_{i,1}),\ldots,
W_{i,1}(t_{i,K-1}-t_{i,1})\bigr)
\]
can be generated and stored independently across \(i=0,\ldots,M-1\), according to the covariance formula above. Then, the updates for the above discretizations can be computed quickly in parallel. We also note that the drift matrix \(A\) inherits the Kronecker structure of \(D,Q\), and \(J\): it acts only through a \(K\times K\) block matrix tensored with \(I_d\). Consequently, the matrix exponential and covariance computations are carried out in this small block space and then applied coordinatewise across the \(d\) dimensions.

\subsection{Alternative representation of Algorithm~\ref{alg:picard_sampler}}
\label{Appendix-F2}

We verify that Algorithm~\ref{alg:picard_sampler} can be viewed as evaluating the Picard iterates of the discretized operator \(\widehat{\mathcal T}_y\) \eqref{eq: operator_hat_T2} at the blockwise collocation nodes.

\begin{lemma}[Equivalence of Algorithm~\ref{alg:picard_sampler} and Picard iterates]
\label{lemma:equivalent_algorithm}

Fix a time step \(n\), set \(y=\widehat X(nh)\), and define the Picard sequence
\[
    \widehat Y_y^{[0]}(t)\equiv y,
    \qquad
    \widehat Y_y^{[k+1]}
    :=
    \widehat{\mathcal T}_y[\widehat Y_y^{[k]}],
    \qquad k\ge0.
\]
Let \(\widehat X^{[k]}(nh+t_{i,j})\) denote the stage values produced by
Algorithm~\ref{alg:picard_sampler} at Picard index \(k\). Then, for every
\(k\ge0\), \(i=0,\ldots,M-1\), and \(j=1,\ldots,K-1\),
\[
    \widehat Y_y^{[k]}(t_{i,j})
    =
    \widehat X^{[k]}(nh+t_{i,j}).
\]
In particular,
\[
    \widehat Y_y^{[\nu]}(h)
    =
    \widehat X^{[\nu]}(nh+h)
    =
    \widehat X((n+1)h).
\]
\end{lemma}

\begin{proof}
Let \(X\in \mathcal C([0,h],\mathbb R^{Kd})\), and write
\[
    \widetilde X:=\widehat{\mathcal T}_y[X].
\]
By the variation-of-constants formula, for any \(0\le s\le t\le h\),
\[
\widetilde X(t)
=
e^{(t-s)A}\widetilde X(s)
+
\int_s^t e^{(t-u)A}P(u;X)\,du
+
\int_s^t e^{(t-u)A}\sqrt{2D}\,dB_{nh+u}.
\]
Here \(P(u;X)\) is the blockwise Lagrange approximation of the nonlinear drift on the
step \([nh,(n+1)h]\).

Now fix a block \(B_i\) and evaluate the preceding identity with
\(s=t_{i,1}\) and \(t=t_{i,j}\). Since \(P(u;X)=P_{B_i}(u;X)\) for \(u\in B_i\), we obtain
\[
\widetilde X(t_{i,j})
=
e^{(t_{i,j}-t_{i,1})A}\widetilde X(t_{i,1})
+
\int_{t_{i,1}}^{t_{i,j}}
e^{(t_{i,j}-u)A}P_{B_i}(u;X)\,du
+
W_{n,i}(t_{i,j}),
\]
where
\[
    W_{n,i}(t_{i,j})
    :=
    \int_{t_{i,1}}^{t_{i,j}}
    e^{(t_{i,j}-u)A}\sqrt{2D}\,dB_{nh+u}.
\]
On the block \(B_i\), the local Lagrange interpolant has the form
\[
    P_{B_i}(u;X)
    =
    \sum_{\ell=1}^{K-1}
    \ell_{i,\ell}(u)\,
    g\!\left(X(t_{i,\ell})\right).
\]
Therefore,
\[
\int_{t_{i,1}}^{t_{i,j}}
e^{(t_{i,j}-u)A}P_{B_i}(u;X)\,du
=
\sum_{\ell=1}^{K-1}
\alpha_{j\ell}
g\!\left(X(t_{i,\ell})\right),
\]
where
\[
    \alpha_{j\ell}
    :=
    \int_{t_{i,1}}^{t_{i,j}}
    e^{(t_{i,j}-u)A}\ell_{i,\ell}(u)\,du .
\]
Since all blocks have the same length and the same relative collocation nodes, these
coefficients are independent of \(i\) after the affine change of variables from \(B_i\) to
\([0,1]\). Thus,
\[
\widetilde X(t_{i,j})
=
e^{(t_{i,j}-t_{i,1})A}\widetilde X(t_{i,1})
+
\sum_{\ell=1}^{K-1}
\alpha_{j\ell}
g\!\left(X(t_{i,\ell})\right)
+
W_{n,i}(t_{i,j}).
\]

We now prove the equivalence by induction on the Picard index. For \(k=0\), both
constructions initialize the stage values at the same constant value \(y=\widehat X(nh)\),
so the claim is immediate.

Assume the claim holds at Picard index \(k\). Apply the preceding display with
\(X=\widehat Y_y^{[k]}\) and \(\widetilde X=\widehat Y_y^{[k+1]}\). Then
\[
\widehat Y_y^{[k+1]}(t_{i,j})
=
e^{(t_{i,j}-t_{i,1})A}
\widehat Y_y^{[k+1]}(t_{i,1})
+
\sum_{\ell=1}^{K-1}
\alpha_{j\ell}
g\!\left(\widehat Y_y^{[k]}(t_{i,\ell})\right)
+
W_{n,i}(t_{i,j}).
\]
By the induction hypothesis,
\[
    g\!\left(\widehat Y_y^{[k]}(t_{i,\ell})\right)
    =
    g\!\left(\widehat X^{[k]}(nh+t_{i,\ell})\right).
\]
Moreover, the value \(\widehat Y_y^{[k+1]}(t_{i,1})\) is the updated value at the beginning
of block \(B_i\), which is exactly the block-start value used by
Algorithm~\ref{alg:picard_sampler} at Picard index \(k+1\). Hence the last display matches
the blockwise update in Algorithm~\ref{alg:picard_sampler}, and so
\[
    \widehat Y_y^{[k+1]}(t_{i,j})
    =
    \widehat X^{[k+1]}(nh+t_{i,j}).
\]
This proves the induction step.

Taking \(i=M-1\) and \(j=K-1\), we have \(t_{M-1,K-1}=h\). Therefore,
\[
    \widehat Y_y^{[\nu]}(h)
    =
    \widehat X^{[\nu]}(nh+h)
    =
    \widehat X((n+1)h),
\]
which completes the proof.
\end{proof}

\section{Continuous-time contraction}
\label{Appendix-C}

In this section, we establish the contractivity of the continuous-time higher-order
Langevin dynamics \eqref{eq: high_order_langevin}. This contraction property is the key ingredient that gives exponential
decay of the error under the exact dynamics. Since the Wasserstein distance is measured in
the Euclidean norm, we prove contraction in a weighted norm
\[
    \|x\|_S^2 := x^\top Sx ,
\]
where \(S \succ 0\). Because \(S\) is positive definite, this norm is equivalent to the
Euclidean norm:
\[
    \lambda_{\min}(S)\|x\|^2
    \le
    \|x\|_S^2
    \le
    \lambda_{\max}(S)\|x\|^2 .
\]
Thus, exponential contraction in the \(S\)-norm implies exponential contraction in the
Euclidean norm up to constants depending on the condition number of \(S\). Let \(b(x)=-(D+Q)\nabla H(x)\) denote the drift, and let \(J_b(x)\) be its Jacobian. Throughout this section, we work with a symmetric positive definite matrix \(S\succ0\) satisfying the inequality
\begin{equation}
S J_b(x)+J_b(x)^\top S\preceq -2C_S S,
\qquad \forall x\in\mathbb{R}^{Kd},
\label{eq:S_matrix_property}
\end{equation}
for some constant \(C_S>0\) depending only on \(m,L,\gamma\), and \(K\). The existence of such a matrix \(S\) is verified in the next subsection. Assuming this inequality holds, we now state the resulting contraction estimate induced by the Lyapunov function \(x \mapsto x^\top Sx\).

\begin{proposition}\label{prop-appendix:convergence_S-norm}
Let the processes $\{X(t)\}$ and $\{X^{\ast}(t)\}$ follow the $K$th-order Langevin process in \eqref{eq: high_order_langevin} with sufficiently high $\gamma$. Assume that the potential $U(X)$ follows Assumption~\ref{assump:U-strong-smooth}. Let the initial conditions be $X(0)$ and $X^{\ast}(0)\in\mathbb{R}^{Kd}$.  
Then there exists a coupling
\[
\bar{\zeta}\in\Gamma\bigl(p_t(X(t)\mid X(0)),\,p_t^{\ast}(X^{\ast}(t)\mid X^{\ast}(0))\bigr)
\]
of the laws of $X(t)$ and $X^{\ast}(t)$, a symmetric positive definite matrix $S \succ 0$ satisyfing \eqref{eq:S_matrix_property}, such that
\begin{equation} 
\frac{\mathrm d}{\mathrm dt}\,
  (X(t)-X^{\ast}(t))^{\top}S(X(t)-X^{\ast}(t))
\;\le\;-2C_S\,(X(t)-X^{\ast}(t))^{\top}S(X(t)-X^{\ast}(t)),
\end{equation}
for all \((X(t),X^{\ast}(t))\sim\bar{\zeta}\).
\end{proposition}

\begin{proof}

Consider the processes $X(t)$ and $X^{\ast}(t)$ in the statement. Under synchronous coupling, we have:
\begin{align*}
    d(X(t) - X^{\ast}(t))
        &=  -(D+Q)\bigl(\nabla H(X(t)) - \nabla H(X^{\ast}(t))\bigr)\,dt \\
        &= \bigl(b(X(t)) - b(X^{\ast}(t))\bigr)\,dt \\
        &= \Bigg[\int_{0}^{1} \underbrace{J_b\bigl(X^{\ast}(t) + \lambda (X(t) - X^{\ast}(t))\bigr)}_{=:\widetilde J_b(t,\lambda)}\,d\lambda\Bigg]
           (X(t) - X^{\ast}(t))\,dt.
\end{align*}
Here $b(\cdot)$ is the drift and $J_b$ its Jacobian . Since $U$ is $m$-strongly convex and $L$-smooth on $\mathbb{R}^d$, the (vector-valued) mean-value theorem (fundamental theorem of calculus) on open convex sets yields the last equality. 

Consequently, using \eqref{eq:S_matrix_property} we obtain
\begin{align*}
    &\frac{\mathrm d}{\mathrm dt}\,
    (X(t)-X^{\ast}(t))^{\top}S(X(t)-X^{\ast}(t)) \\
    &=
    (X(t)-X^{\ast}(t))^{\top}
    \bigg(
        S\int_{0}^{1} \widetilde J_b(t,\lambda)\,d\lambda
        +
        \int_{0}^{1} \widetilde J_b(t,\lambda)^\top\,d\lambda\,S
    \bigg)
    (X(t)-X^{\ast}(t)) \\
    &=
    \int_{0}^{1} (X(t)-X^{\ast}(t))^{\top}
    \bigl(
        S\widetilde J_b(t,\lambda)
        +
        \widetilde J_b(t,\lambda)^{\top}S
    \bigr)
    (X(t)-X^{\ast}(t))\,d\lambda \\
    &\le
    \int_{0}^{1}
    -2C_S\,(X(t)-X^{\ast}(t))^{\top}S(X(t)-X^{\ast}(t))\,d\lambda \\
    &=
    -2C_S\,(X(t)-X^{\ast}(t))^{\top}S(X(t)-X^{\ast}(t)).
\end{align*}

This finishes the proof of Proposition~\ref{prop-appendix:convergence_S-norm}.
\end{proof}

\begin{remark}
This proposition immediately yields exponential decay of the continuous-time
dynamics. Indeed, by Grönwall's inequality,
\[
    \|X(t)-X^{\ast}(t)\|_S^2
    \le
    e^{-2C_S t}\|X(0)-X^{\ast}(0)\|_S^2 .
\]
Since \(S\succ 0\), the \(S\)-norm is equivalent to the Euclidean norm, and hence this contraction
can be translated into a Wasserstein contraction up to constants depending on \(S\).
\end{remark}

\subsection{Existence of Lyapunov matrix \(S\)}
\label{Appendix-C1}

To prove the existence, we follow the approach developed in
\cite{monmarche2023almost, arnold2014sharp, arnold2020sharp}. We first introduce a convenient reparametrization of the dynamics \eqref{eq: high_order_langevin}, which allows us to apply their results directly.
Write the state as
\[
X(t)=\bigl(X_1(t),\,Y(t)\bigr)\in \mathbb{R}^{d}\times \mathbb{R}^{(K-1)d}.
\]
Then the continuous-time process \eqref{eq: high_order_langevin} can be rewritten as
\begin{equation}\label{eq:alt_sde_system}
  \mathrm{d}X(t) = P\,Y(t)\,\mathrm{d}t,
  \qquad
  \mathrm{d}Y(t)
    = -P^{\top}\nabla U(X(t))\,\mathrm{d}t
      - \gamma\,\tilde{Q}\,Y(t)\,\mathrm{d}t
      + \sqrt{2\gamma\,\tilde{D}}\,\mathrm{d}B_t,
\end{equation}
where \(B_t\) is a standard \((K-1)d\)-dimensional Brownian motion and the
matrices \(P\in \mathbb{R}^{d\times (K-1)d}\),
\(\tilde Q,\tilde D\in \mathbb{R}^{(K-1)d\times (K-1)d}\) are defined by \(P \;=\;
\begin{pmatrix}
I_d & 0 & \cdots & 0
\end{pmatrix},\)
\[
\tilde{Q} \;=\;
\begin{pmatrix}
0   & -I_d & 0    & \cdots & 0 \\
I_d &  0   & -I_d & \ddots & \vdots \\
0   & \ddots & \ddots & \ddots & 0 \\
\vdots & \ddots & I_d & 0 & -I_d \\
0   & \cdots & 0 & I_d & I_d
\end{pmatrix},
\quad
\tilde D
\;=\;
\begin{pmatrix}
0_d &        &        &        & 0 \\
    & \ddots &        &        & \vdots \\
    &        & 0_d    &        & 0 \\
    &        &        & 0_d    & 0 \\
0   & \cdots & 0      & 0      & I_d
\end{pmatrix}.
\]

Let \(b(x_1,y)\) denote the drift of \eqref{eq:alt_sde_system}, then for \(x_1\in\mathbb{R}^{d}\), \(y\in\mathbb{R}^{(K-1)d}\),
(and correspondingly for \(x=(x_1,y)\in\mathbb{R}^{Kd}\)),
\begin{equation}\label{eq:drift_alt_sde}
  b(x_1,y)=b(x)=-(D+Q)\,\nabla H(x).
\end{equation}
We denote by \(J_b\) the Jacobian of this drift \(b\).

To this end, we state the following set of conditions, adapted from prior work (see~\citet{monmarche2023almost}), and later show that our dynamics satisfy them.
\begin{condition}\label{cond:C1}
There exist constants $m,L>0$ such that
\[
\forall\,x\in\mathbb{R}^d,\qquad
m\,I_d \;\preceq\; \nabla^2 U(x) \;\preceq\; L\,I_d .
\]    
\end{condition}

\begin{condition}\label{cond:C2}
There exist $\kappa>0$ and a symmetric positive–definite matrix
$\tilde{N}\in\mathbb{R}^{(K-1)d\times (K-1)d}$ such that
\[
\tilde{N}\tilde{Q}+\tilde{Q}^{\top}\tilde{N} \;\succeq\; 2\kappa\,\tilde{N} .
\]
\end{condition}

\begin{condition}\label{cond:C3}
With respect to the decomposition
\[
\widetilde Q=
\begin{pmatrix}
\widetilde Q_{11} & \widetilde Q_{12}\\
\widetilde Q_{21} & \widetilde Q_{22}
\end{pmatrix},
\qquad
\widetilde Q_{11}\in\mathbb R^{d\times d},
\]
the block \(\widetilde Q_{22}\) is invertible and that the Schur complement
\[
E:=\widetilde Q_{11}
-\widetilde Q_{12}\widetilde Q_{22}^{-1}\widetilde Q_{21}
\]
is symmetric positive definite. Moreover, set \(H:=\tilde{Q}_{12}\tilde{Q}_{22}^{-1}\).
\end{condition}

The following lemma ensures that the higher-order Langevin dynamics \eqref{eq:alt_sde_system} satisfies all the above conditions.
\begin{lemma}
The dynamics \eqref{eq:alt_sde_system} satisfy
Conditions~\ref{cond:C1}–\ref{cond:C3}.
\end{lemma}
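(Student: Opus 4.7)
The plan is to verify Conditions \ref{cond:C1}, \ref{cond:C2}, \ref{cond:C3} one at a time, since each concerns a different object. Condition \ref{cond:C1} is an immediate restatement of Assumption~\ref{assump:U-strong-smooth}: the reparametrization that led to \eqref{eq:alt_sde_system} does not touch the potential $U$, so the bounds $m I_d \preceq \nabla^2 U \preceq L I_d$ carry over with the same constants. The substantive work lies in \ref{cond:C3} (a Schur-complement computation on $\tilde Q$) and \ref{cond:C2} (constructing a Lyapunov inner product for $\tilde Q$).

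\textbf{Condition \ref{cond:C3}.} Inspecting the block structure of $\tilde Q$, the off-diagonal blocks are maximally sparse: $\tilde Q_{11} = 0$, $\tilde Q_{12} = (-I_d, 0, \ldots, 0)$, and $\tilde Q_{21} = (I_d, 0, \ldots, 0)^\top$. Hence $\tilde Q_{12}\tilde Q_{22}^{-1}\tilde Q_{21}$ simply picks out the negative of the top-left $d\times d$ block of $\tilde Q_{22}^{-1}$, and $E$ equals exactly that block. Rather than inverting $\tilde Q_{22}$ explicitly, I would use the block-inverse identity $E^{-1} = (\tilde Q^{-1})_{11}$ and read off this block by solving the linear system $\tilde Q v = e_1 \otimes x$. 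The tridiagonal structure of $\tilde Q$ yields the recurrence $v_{j+1} = v_{j-1}$ for the interior rows together with the boundary relation $v_{K-2}+v_{K-1}=0$ at the last row, from which a short case split on the parity of $K$ forces $v_1 = x$. This gives $(\tilde Q^{-1})_{11} = I_d$ and hence $E = I_d$, which is trivially SPD. Invertibility of $\tilde Q_{22}$ follows from the fact that $\tilde Q_{22}$ is precisely the matrix $\tilde Q$ associated with order $K-1$, so the same recurrence argument applies inductively.

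\textbf{Condition \ref{cond:C2} and main obstacle.} By standard Lyapunov theory, the existence of SPD $\tilde N$ and $\kappa>0$ with $\tilde N \tilde Q + \tilde Q^\top \tilde N \succeq 2\kappa \tilde N$ is equivalent to $\tilde Q$ being Hurwitz (every eigenvalue has strictly positive real part). Given Hurwitzness, the choice $\tilde N := \int_0^\infty e^{-\tilde Q^\top s}\,e^{-\tilde Q s}\,\mathrm ds$ is well-defined SPD and satisfies $\tilde N \tilde Q + \tilde Q^\top \tilde N = I$, from which one can take $\kappa = 1/(2\|\tilde N\|_{\mathrm{op}})$. So the real task is to prove $\tilde Q$ is Hurwitz. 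Because $\tilde Q = M \otimes I_d$ for a $(K-1)\times(K-1)$ scalar matrix $M$, it suffices to show $M$ is Hurwitz; this is where I expect the main obstacle to lie, as the skew-symmetric tridiagonal part of $M$ contributes purely imaginary eigenvalues and the single symmetric entry $+1$ in the $(K-1,K-1)$ position must push every eigenvalue strictly into the open right half-plane. Rather than attempting a fresh spectral computation for general $K$, I would invoke the hypocoercivity constructions for chain-type matrices developed in \cite{monmarche2023almost, arnold2014sharp, arnold2020sharp}, which supply an explicit SPD $\tilde N$ together with a quantitative rate $\kappa>0$ for exactly this family, and verify that the algebraic form of their construction matches our $\tilde Q$.
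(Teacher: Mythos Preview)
Your plan is correct in outline and reaches the right conclusions, but your routes for Conditions~\ref{cond:C2} and~\ref{cond:C3} differ from the paper's, and for~\ref{cond:C2} you leave the crucial step to citation rather than proving it.

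For Condition~\ref{cond:C3}, the paper bypasses inversion entirely: it guesses $H=-(I_d,\ldots,I_d)\in\mathbb{R}^{d\times(K-2)d}$ and verifies in one line that $H\tilde Q_{22}=\tilde Q_{12}$ (each column of $\tilde Q_{22}$ beyond the first has block-entries summing to zero, while the first column sums to $I_d$). Then $E=\tilde Q_{11}-H\tilde Q_{21}=0-(-I_d)=I_d$. Your Schur/block-inverse recurrence also yields $E=I_d$ and is valid, but requires separately establishing invertibility of $\tilde Q_{22}$ and $\tilde Q$; the paper's guess-and-check is shorter.

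For Condition~\ref{cond:C2}, you correctly reduce the problem to showing the $(K{-}1)\times(K{-}1)$ backbone $\tilde Q_{\mathrm{can}}$ is Hurwitz, and your Lyapunov integral $\tilde N=\int_0^\infty e^{-\tilde Q^\top s}e^{-\tilde Q s}\,ds$ would then work. But you defer the Hurwitz verification to the cited constructions, and those references (Arnold et al.) take positivity of $\mu=\min\operatorname{Re}\lambda(\tilde Q)$ as a \emph{hypothesis}; they build $\tilde N$ only after this is known. The paper closes this gap with a short self-contained argument: write $\tilde Q_{\mathrm{can}}=L+M$ with $L=\tfrac12(\tilde Q_{\mathrm{can}}+\tilde Q_{\mathrm{can}}^\top)$ Hermitian and $M$ skew-Hermitian. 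If $\tilde Q_{\mathrm{can}}x=\lambda x$ with $x\neq 0$, then $\operatorname{Re}\lambda = x^*Lx/x^*x = |x_{K-1}|^2/|x|^2$, since $L=\operatorname{diag}(0,\ldots,0,1)$. If $x_{K-1}=0$, the last row of $\tilde Q_{\mathrm{can}}x=\lambda x$ forces $x_{K-2}=0$, and back-substitution through the tridiagonal rows annihilates every coordinate, contradicting $x\neq 0$. Hence $\operatorname{Re}\lambda>0$ for every eigenvalue, and only then do the cited constructions (or your integral) furnish $\tilde N$ and $\kappa$.
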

\begin{proof}
Condition~\ref{cond:C1} follows directly from the smoothness and strong convexity
assumptions on \(U\) (Assumption~\ref{assump:U-strong-smooth}).

For Condition~\ref{cond:C2}, write \(\widetilde Q=\widetilde Q_{\mathrm{can}}\otimes I_d\) with the
\((K{-}1)\times (K{-}1)\) ``canonical'' backbone matrix
\[
\widetilde Q_{\mathrm{can}}
=\begin{pmatrix}
0 & -1 & 0 & \cdots & 0 \\
1 &  0 & -1& \ddots & \vdots \\
0 & \ddots & \ddots & \ddots & 0 \\
\vdots & \ddots & 1 & 0 & -1 \\
0 & \cdots & 0 & 1 & 1
\end{pmatrix}.
\]
Hence $\lambda(\widetilde Q)=\lambda(\widetilde Q_{\mathrm{can}})$ 
(with $\lambda(\cdot)$ denoting the spectrum, with multiplicities scaled by \(d\)), and eigenvectors of $\widetilde Q$ are $w=v\otimes e_j$, where
$v$ is an eigenvector of $\widetilde Q_{\mathrm{can}}$ and
$\{e_j\}_{j=1}^d$ is the canonical basis of $\mathbb{R}^d$.

Given that $\widetilde Q$ and $\widetilde Q_{\mathrm{can}}$ share the same spectrum, we now show that
\[
\min\{\mathrm{Re}(\lambda):\lambda\in\lambda(\widetilde Q_{\mathrm{can}})\}>0.
\]
Decompose
\[
\widetilde Q_{\mathrm{can}}
=\frac{\widetilde Q_{\mathrm{can}}+\widetilde Q_{\mathrm{can}}^{\!\top}}{2}
+\frac{\widetilde Q_{\mathrm{can}}-\widetilde Q_{\mathrm{can}}^{\!\top}}{2}
=: L+M,
\]
where \(L\) is Hermitian and \(M\) is skew–Hermitian. If
\(\widetilde Q_{\mathrm{can}}x=\lambda x\) for some
\(x=(x_1,\ldots,x_{K-1})\in\mathbb{C}^{K-1}\setminus\{0\}\), then by the standard
Rayleigh-quotient identity for the symmetric part,
\[
\text{Re}(\lambda)=\frac{x^{\!*}Lx}{x^{\!*}x}=\frac{x^2_{K-1}}{\|x\|^2}.
\]
We claim \(x_{K-1}\neq 0\). Indeed, if \(x_{K-1}=0\), then the last row of
\(\widetilde Q_{\mathrm{can}}x=\lambda x\) gives \(x_{K-2}=0\); iterating
backwards yields \(x_{K-3}=0,\ldots,x_1=0\), a contradiction to \(x\neq 0\).
Hence \(x_{K-1}\neq 0\) and thus \(\text{Re}(\lambda)>0\) for every eigenvalue
\(\lambda\) of \(\widetilde Q_{\mathrm{can}}\). The same conclusion then holds
for \(\widetilde Q=\widetilde Q_{\mathrm{can}}\otimes I_d\).

Given the above result that \(\min\{\text{Re}(\lambda):\lambda\in\lambda(\widetilde Q)\}>0\),
the works \citet[ Lemma~4.3]{arnold2014sharp}, 
\citet[ Section~2.1]{arnold2020sharp}, 
and \citet[ Appendix~A]{dang2025high} provide an explicit
construction of a symmetric positive–definite matrix \(\tilde{N}\) such that
\[
\tilde{N}\widetilde Q+\widetilde Q^{\top}\tilde{N} \;\succeq\; 2\kappa\,\tilde{N} .
\]
This satisfies Condition~\ref{cond:C2}.

With $H:=-(I_d,\ldots,I_d)\in\mathbb{R}^{d\times (K-2)d}$ we verify directly that
\(H\,\widetilde Q_{22}=\widetilde Q_{12}\). It follows that
\(E:=\widetilde Q_{11}-\widetilde Q_{12}\widetilde Q_{22}^{-1}\widetilde Q_{21}
=\widetilde Q_{11}-H\,\widetilde Q_{21}=I_d\),
which is symmetric positive definite. this proves Condition~\ref{cond:C3}.
\end{proof}

Since $E= I_d\succ0$ and $\tilde{N}\succ0$, we can fix constants $h_i>0$ ($i=1,\dots,5$) such that
\begin{equation}\label{eq:hi-bounds}
  \tilde{N} P^{\top} P \tilde{N} \;\preceq\; h_1\,\tilde{N},
  \qquad
  \frac{1}{h_2}\,I_d \;\preceq\; E \;\preceq\; h_3\,I_d,
\end{equation}
and
\begin{equation}\label{eq:block-bounds}
  \begin{pmatrix} I_d & -H \\[2pt] 0 & 0 \end{pmatrix} \;\preceq\; h_4\,\tilde{N},
  \qquad
  \begin{pmatrix} I_d & -H \\[2pt] -H^{\top} & 0 \end{pmatrix} \;\preceq\; h_5\,\tilde{N}.
\end{equation}
Note that, here we can take $h_2 =h_3 =1$.

The following theorem, taken from \cite[Theorem~9]{monmarche2023almost}, establishes the existence of a matrix \(S\) satisfying \eqref{eq:S_matrix_property}; see that reference for the proof.

\begin{theorem}\label{thm:contr_S_C_s}
Define
\begin{align}
  \gamma_{0}
  := 2\sqrt{\frac{h_{1}L}{\kappa}}\,
     \max\!\left(\sqrt{h_{2}h_{5}},\,\sqrt{\frac{h_{4}}{\kappa}}\right). \label{eq:gamma-0}
\end{align}
If $\gamma\ge \gamma_{0}$, then  under Conditions~\ref{cond:C1}–\ref{cond:C3} there exist a positive definite matrix $S$ such that the drift of the process
\eqref{eq:alt_sde_system} satisfies 
\[
\forall\, x\in\mathbb{R}^{Kd},\qquad S\,J_b(x) + J_b^{\top}(x)S\,\ \preceq\ -2\,C_s\,S,
\]
 with
\[
  C_s \;=\; \min\!\left(\frac{m}{3h_{3}\gamma},\; \frac{\gamma\kappa}{6}\right).
\]
Moreover, $S$ is a positive definite matrix such that
\[
  \frac{1}{2}
  \begin{pmatrix}
    I_d & 0 \\
    0 & \dfrac{\kappa}{L h_{1}}\,\tilde{N}
  \end{pmatrix}
  \;\preceq\;
  S
  \;\preceq\;
  \frac{3}{2}
  \begin{pmatrix}
    I_d & 0 \\
    0 & \dfrac{\kappa}{L h_{1}}\,\tilde{N}
  \end{pmatrix}.
\]
\end{theorem}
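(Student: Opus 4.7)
The plan is to construct $S$ explicitly as a small perturbation of the block–diagonal matrix $S_0 := \mathrm{diag}(I_d,\,\tfrac{\kappa}{L h_1}\tilde{N})$ by a hypocoercivity-type off-diagonal coupling, and then to verify both the sandwich bound and the Lyapunov inequality $S J_b(x)+J_b(x)^\top S \preceq -2 C_s\,S$ block by block. In the $(X,Y)$ coordinates of \eqref{eq:alt_sde_system} the Jacobian \eqref{eq: jac_alt_sde} takes the form
\[
J_b(x) \;=\; \begin{pmatrix} 0 & P \\[2pt] -P^\top \nabla^2 U(x_1) & -\gamma\,\tilde{Q} \end{pmatrix},
\]
so the $(2,2)$–block already produces dissipation in the momentum variable via Condition~\ref{cond:C2}, but the $(1,1)$–block vanishes and provides no contraction on $X$. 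To supply the missing position contraction I would follow the classical Villani trick and add to $S_0$ an off-diagonal block of size $\varepsilon/\gamma$ built from the matrix $H$ of Condition~\ref{cond:C3}, e.g.\ placing $-\tfrac{\varepsilon}{\gamma}(I_d\ -H)$ into the $(1,2)$ slot and its transpose into the $(2,1)$ slot. A Schur–complement computation using \eqref{eq:block-bounds} and \eqref{eq:hi-bounds} then shows that for $\varepsilon$ small enough, independently of $\gamma$, one has $\tfrac{1}{2}S_0 \preceq S \preceq \tfrac{3}{2}S_0$; this already yields the sandwich bound stated in the theorem.

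Next I would expand $S J_b + J_b^\top S$ in $2\times 2$ blocks. The $(2,2)$–contribution equals $-\tfrac{\gamma\kappa}{L h_1}(\tilde{N}\tilde{Q}+\tilde{Q}^\top\tilde{N}) \preceq -\tfrac{2\gamma\kappa^2}{L h_1}\tilde{N}$ by Condition~\ref{cond:C2}, giving momentum dissipation of order $\gamma\kappa$. The off-diagonal coupling together with the $P$ and $-P^\top\nabla^2 U$ entries of $J_b$ produces, via Conditions~\ref{cond:C1}–\ref{cond:C3}, a negative definite contribution in the $(1,1)$–block of order at least $\tfrac{2m\varepsilon}{h_3 \gamma}\,I_d$, which is precisely the missing contraction on $X$. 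All remaining cross terms mix $\nabla^2 U$ with $\tilde{N}$, $H$, and $P$, and can be bounded using $\nabla^2 U \preceq L\,I_d$, \eqref{eq:hi-bounds}, and \eqref{eq:block-bounds}; these errors can then be absorbed into fractions of the two diagonal dissipation terms, producing the final rate $C_s=\min\!\left(\tfrac{m}{3h_3\gamma},\,\tfrac{\gamma\kappa}{6}\right)$ from balancing the induced $X$–rate $\sim m/(h_3\gamma)$ against the $Y$–rate $\sim \gamma\kappa/(L h_1)$.

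The main obstacle is pinning down the threshold $\gamma_0$ in \eqref{eq:gamma-0}: the off-diagonal coupling simultaneously supplies the $X$–contraction and generates error quadratic forms whose size is controlled by $h_4$ and $h_5$ via \eqref{eq:block-bounds}. These errors must be dominated by fractions of the momentum dissipation $\tfrac{\gamma\kappa}{L h_1}\tilde{N}$ and of the induced position dissipation $\tfrac{m\varepsilon}{h_3\gamma}I_d$; solving the resulting AM–GM type inequality in $\gamma$ yields exactly the quoted $\gamma_0 = 2\sqrt{h_1 L/\kappa}\,\max(\sqrt{h_2 h_5},\sqrt{h_4/\kappa})$. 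Because the construction and algebra match \citet[Theorem~9]{monmarche2023almost} applied to our $(\tilde{Q},\tilde{N},E,H)$ and the constants $h_1,\ldots,h_5$, an equally valid — and in an appendix probably cleaner — plan is simply to verify Conditions~\ref{cond:C1}–\ref{cond:C3}, already established above, and invoke that theorem verbatim.
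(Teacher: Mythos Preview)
Your proposal is correct and in fact goes beyond what the paper does: the paper does not prove this theorem at all but simply imports it verbatim from \citet[Theorem~9]{monmarche2023almost}, exactly the option you identify in your last sentence. Your sketch of the underlying hypocoercivity argument (perturbing $S_0$ by an $\varepsilon/\gamma$ off-diagonal block built from $H$, extracting $(1,1)$-dissipation via $E$ and $(2,2)$-dissipation via $\tilde N\tilde Q+\tilde Q^\top\tilde N$, then absorbing cross terms with \eqref{eq:hi-bounds}--\eqref{eq:block-bounds}) is the content of that cited proof, so either route is acceptable here.
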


\begin{remark}
Based on the construction of $\tilde{N}$ provided in \citet[ Appendix~A]{dang2025high}, 
both $\|\tilde{N}\|_{\mathrm{op}}$ and $\|\tilde{N}^{-1}\|_{\mathrm{op}}$ 
are bounded by constants that are independent of the ambient dimension~$d$. 
As a result, all constants appearing in inequality~\eqref{eq:hi-bounds}, 
as well as the eigenvalues of the associated matrix~$S$, 
are dimension-free (see \citet[ Appendix~A]{dang2025high}). 
In particular, the parameters $h_i$ and~$\kappa$, 
and consequently the constants 
$C_S$, $\lambda_{\min}(S)$, and $\lambda_{\max}(S)$, 
remain independent of~$d$.
\end{remark}

\section{Proof of main results}
\label{Appendix-D}

With the Lyapunov matrix \(S\) constructed in Appendix~\ref{Appendix-C}, we now turn to the proof of the main results. We begin with the following synchronous coupling bound:
\begin{equation}\label{eq:w2-S}
W_2^2\bigl(\pi,\hat\pi^{(N)}\bigr)
\;\le\;
\|S^{-1}\|_{\mathrm{op}}\,
\mathbb{E}\bigl[\|X(Nh)-\widehat X(Nh)\|_{S}^{2}\bigr],
\end{equation}
where $\pi$ is the stationary law of the continuous dynamics as defined in \eqref{eq: high_order_langevin}, $\hat\pi^{(N)}$ is the law of the output of Algorithm~\ref{alg:picard_sampler} after $N$ steps of size $h$, and both processes are driven by the same Brownian path on each step. Throughout we assume $X(0)\sim\pi$ and $\widehat X(0)=0$, so the initial continuous process is stationary.

\subsection{Proof of Theorem~\ref{thm:main-W2-simplified}}
\label{Appendix-D1}
We provide the formal version of Theorem~1 below. 
\begin{theorem}\label{thm:main-W2-unified}
Assume that \(U\) satisfies
Assumptions~\ref{assump:U-strong-smooth}--\ref{assump:higher-smooth}.
Fix \(K\ge 2\), and run Algorithm~\ref{alg:picard_sampler} with
\(\gamma>\gamma_0\) \eqref{eq:gamma-0}, \(M\) blocks, \(K-1\) equispaced collocation nodes per
block, and \(\nu\) Picard iterations per step. Let the fixed step size be
\(h=h_0\in[h_1',h_1]\), where \(0<h_1'<h_1\) are the thresholds defined in
\eqref{eq: h_1' and h1}; these thresholds depend only on
\(\gamma,m,L,L_{\max},K\). Then there exist constants
\(c_1,c_2,c_3>0\), independent of \(d,N,M,\nu,\varepsilon\), such that for any
\(\varepsilon\in(0,1)\), if
\[
    N
    =
    \left\lceil
    \frac{c_1}{h_0}\log\!\left(\frac{d}{\varepsilon^2}\right)
    \right\rceil,
\]
and
\[
    M \ge c_2\sqrt d\,\varepsilon^{-1/(K-1)}
    \log^{\frac{1}{2K-2}}\!\left(\frac{d}{\varepsilon^2}\right),
    \qquad
    \nu \ge c_3\log M ,
\]
then the output law \(\widehat\pi^{(N)}\) of
Algorithm~\ref{alg:picard_sampler} satisfies
\[
    W_2^2\!\left(\widehat\pi^{(N)},\pi\right)
    \le
    \|S^{-1}\|_{\mathrm{op}}\varepsilon^2 .
\]
\end{theorem}

\begin{proof}
From Proposition~\ref{prop-appendix:combined-global}, for all sufficiently small constant
step sizes \(h>0\), we have
\begin{equation} \label{eq:global-clean-final-recalled}
E_{N}
\;\le\; C_0^*\,e^{-\frac{1}{2}C_S Nh}\,d
\;+\; C^\star_1\,\frac{N\,h^{2K-3}d^{K-1}}{M^{2K-2}}
\;+\; C^\star_2\,\rho^{2\nu - 1}d .
\end{equation}
Fix such a constant step size \(h=h_0>0\), independent of \(d,N,M,\nu\), and
\(\varepsilon\). We choose
\[
    N
    =
    \left\lceil
    \frac{2}{C_S h_0}
    \log\!\left(\frac{3C_0^\ast d}{\varepsilon^2}\right)
    \right\rceil .
\]
Then
\[
C_0^\ast\,e^{-\frac{1}{2}C_S Nh_0}\,d
\;\le\;
\frac{\varepsilon^2}{3}.
\]

Next, since \(\rho<1/10\), choosing
\[
\nu
\;\ge\;
\frac{1}{2}\left(
1+\frac{(2K-2)\log M}{\log(10)}
\right)
\]
ensures that
\[
\rho^{2\nu-1}
\le
\frac{1}{M^{2K-2}}.
\]
Consequently,
\[
C^\star_2\rho^{2\nu-1}d
\le
C^\star_2\frac{d}{M^{2K-2}}.
\]

It remains to choose \(M\). Since \(h_0=\Theta(1)\) and
\[
    N
    =
    O\!\left(
    \log\!\left(\frac{d}{\varepsilon^2}\right)
    \right),
\]
there exists a constant \(C_M>0\), depending only on the fixed problem parameters, such
that if
\[
M
\;\ge\;
C_M\,\sqrt{d}\,\varepsilon^{-\frac{1}{K-1}}\,
\log^{\frac{1}{2K-2}}\!\left(\frac{d}{\varepsilon^2}\right),
\]
then
\[
C^\star_1\,\frac{N\,h_0^{2K-3}d^{K-1}}{M^{2K-2}}
\;\le\;
\frac{\varepsilon^2}{3}
\qquad\text{and}\qquad
C^\star_2\,\frac{d}{M^{2K-2}}
\;\le\;
\frac{\varepsilon^2}{3}.
\]
Combining the three bounds in \eqref{eq:global-clean-final-recalled} gives
\[
W_2^2\bigl(\pi,\hat\pi^{(N)}\bigr)
\;\le\;
\|S^{-1}\|_{\mathrm{op}}\,\varepsilon^2,
\]
which proves Theorem~\ref{thm:main-W2-unified}.
\end{proof}

\subsection{Proof of Theorem~\ref{thm:main-W2-simplified-ridge}}
We provide the formal version of Theorem~2 below. 
\begin{theorem}\label{thm:main-W2-unified-ridge}
Assume that \(U\) satisfies Assumptions~\ref{assump:U-strong-smooth}--\ref{assump:centered} and Assumption~\ref{assump:ridge-separable}. Fix \(K\ge 3\), and run Algorithm~\ref{alg:picard_sampler} for \(\gamma > \gamma_0\) \eqref{eq:gamma-0} with \(M\) blocks, \(K-1\) equispaced collocation nodes per block, and \(\nu\) Picard iterations per step. Let the fixed step size be \(h=h_0\in[h_1',h_1]\), where \(0<h_1'<h_1\) are the thresholds defined in \eqref{eq: h_1' and h1}; these thresholds depend only on \(\gamma,m,L,L_{\max},\Lambda_\Theta,K\). Then there exist constants \(c_1,c_2,c_3>0\), independent of \(d,r,N,M,\nu,\varepsilon\), such that for any \(\varepsilon\in(0,1)\), if
\[
    N
    =
    \left\lceil
    \frac{c_1}{h_0}\log\!\left(\frac{d}{\varepsilon^2}\right)
    \right\rceil,
\]
and
\[
    M \ge c_2 \max\left\{\sqrt r,\; d^{\frac{1}{2K-2}}\right\}\,\varepsilon^{-1/(K-1)}
    \log\!^{\frac{1}{2K-2}}\left(\frac{d}{\varepsilon^2}\right),
    \qquad
    \nu \ge c_3\log M ,
\]
then the output law \(\widehat\pi^{(N)}\) of Algorithm~\ref{alg:picard_sampler} satisfies
\[
    W_2^2\!\left(\widehat\pi^{(N)},\pi\right)
    \le
    \|S^{-1}\|_{\mathrm{op}}\varepsilon^2 .
\]
\end{theorem}

\begin{proof}
The proof is identical to the proof of Theorem~\ref{thm:main-W2-simplified},
except for the interpolation term. Indeed, the continuous-time contraction
term \((\mathrm{I})\) and the Picard iteration term \((\mathrm{III})\) are
unchanged. Only the interpolation residual \((\mathrm{II})\) is improved by
the ridge-separable structure.

By Lemma~\ref{lem_appendix:lagrange-IR-uniform-ridge}, the interpolation error satisfies
\[
\mathbb E I_U
\;\lesssim\;
\frac{N h^{2K-2}}{M^{2K-2}}
\left(
r^{K-1}+d
\right).
\]
 Therefore, after applying the same one-step decomposition and summing the recursion as in Proposition~\ref{prop-appendix:combined-global}, we obtain
\[
E_{N}
\;\le\;
C_0^\ast e^{-\frac12 C_S N h}d
+
C_1^\ast\frac{N h^{2K-3}}{M^{2K-2}}
\left(r^{K-1}+d\right)
+
C_2^\ast\rho^{2\nu-1}d .
\]
 Fix a constant step size \(h=h_0\in[h_1',h_1]\), where \(h_1', h_1\) are the thresholds defined in \eqref{eq: h_1' and h1} independent of \(r, d,N,M,\nu\), and
\(\varepsilon\). We choose
\[
N
=
\left\lceil
\frac{2}{C_S h_0}
\log\left(\frac{3C_0^\ast d}{\varepsilon^2}\right)
\right\rceil .
\]
Then
\[
C_0^\ast e^{-\frac12 C_S N h_0} d
\le
\frac{\varepsilon^2}{3}.
\]

Next, since \(\rho<1/10\), choosing
\[
\nu
\ge
\frac12\left(
1+\frac{(2K-2)\log M}
{\log(10)}
\right)
\]
ensures that
\[
\rho^{2\nu-1}
\le
\frac{1}{M^{2K-2}}.
\]
Consequently,
\[
C_2^\ast \rho^{2\nu-1} d
\le
\frac{C_2^\ast d}{M^{2K-2}}.
\]

It remains to choose \(M\). Since \(h_0=\Theta(1)\) and
\[
N
=
O\!\left(\log\left(\frac{d}{\varepsilon^2}\right)\right),
\]
there exists a constant \(C_M>0\), depending only on the fixed problem
parameters, such that if
\[
M
\ge
C_M
\left[
\frac{
\bigl(r^{K-1}+d\bigr)
\log\left(\frac{d}{\varepsilon^2}\right)
}{\varepsilon^2}
\right]^{\frac{1}{2K-2}},
\]
then
\[
C_1^\ast
\frac{
N h_0^{2K-3}\bigl(r^{K-1}+d\bigr)
}{
M^{2K-2}
}
\le
\frac{\varepsilon^2}{3}, \quad \text{and} \quad C_2^\ast\rho^{2\nu-1}d  \leq \frac{\varepsilon^2}{3}.
\]

Combining the three bounds gives
\[
W_2^2(\pi,\widehat{\pi}^{(N)})
\le
\|S^{-1}\|_{\mathrm{op}}\varepsilon^2.
\]
\end{proof}

\subsection{Global error bound}
\label{Appendix-D2}

In this section, we combine the three one-step estimates: continuous-time contraction,
blockwise Lagrange interpolation error, and finite Picard-iteration error. This yields the
global recursion used in the proof of the main theorem. We state the formal version below.

\begin{proposition}
\label{prop-appendix:combined-global}
Let
\[
    E_n:=\mathbb E\|X(nh)-\widehat X(nh)\|_S^2 .
\]
There exist constants \(0<h_1'\le h_1\), depending only on \(\gamma,m,L,L_{\max}, \Lambda_\Theta, K\), such that for every constant step size \(h\in[h_1',h_1]\), the interpolation-error (Lemma~\ref{lem_appendix:lagrange-IR-uniform}, or
Lemma~\ref{lem_appendix:lagrange-IR-uniform-ridge} in the ridge-separable
setting), Picard-error bounds (Proposition~\ref{prop-appendix:picard-error}) apply. Moreover, there exist constants \(C_0^\ast,C_1^\ast,C_2^\ast>0\), independent of \(h, d, r, N,M,\nu,\varepsilon\), such that for all \(N\ge 0\),
\[
E_{N+1}
\le
C_0^\ast e^{-\frac12 C_S(N+1)h}d
+
C_1^\ast\frac{N h^{2K-3}d^{K-1}}{M^{2K-2}}
+
C_2^\ast\rho^{2\nu-1}d .
\]
\end{proposition}

\begin{proof}

We start from the one-step error decomposition \eqref{eq:one-step-ED}:
\[
E_{n+1}
\le
(1+C_Sh)(\mathrm I)
+
\left(2+\frac{2}{C_Sh}\right)(\mathrm{II})
+
\left(2+\frac{2}{C_Sh}\right)(\mathrm{III}),
\]
where \((\mathrm I)\), \((\mathrm{II})\), and \((\mathrm{III})\) denote the continuous-time
contraction, interpolation-error, and Picard-error terms, respectively.

The continuous-time term is controlled by Proposition~\ref{prop-appendix:convergence_S-norm}:
\[
    (\mathrm I)\le e^{-2C_Sh}E_n .
\]
The interpolation term is controlled by Proposition~\ref{prop:error-bound} together with
Lemma~\ref{lem_appendix:lagrange-IR-uniform}:
\[
    (\mathrm{II})
    \lesssim
    \frac{N h^{2K-1}d^{K-1}}{M^{2K-2}} .
\]
Finally, the Picard term is controlled by Proposition~\ref{prop-appendix:picard-error}:
\[
    (\mathrm{III})
    \lesssim
    \rho^{2\nu}h^2E_n+\rho^{2\nu}dh .
\]

Since \(\rho=2L\Gamma_\phi h\), the factor \((2+2/(C_Sh))\) can be absorbed by replacing
\(\rho^{2\nu}\) with \(\rho^{2\nu-1}\). Therefore, for constants \(C_1,C_2,C_3>0\),
\[
E_{n+1}
\le
\left((1+C_Sh)e^{-2C_Sh}+C_3\rho^{2\nu-1}h^2\right)E_n
+
C_1\frac{N h^{2K-2}d^{K-1}}{M^{2K-2}}
+
C_2\rho^{2\nu-1}dh .
\]

We next show that the coefficient of \(E_n\) is contractive for sufficiently small \(h\).
Since \(\rho<1/10\), there exists a constant \(\widetilde C_a>0\), independent of
\(h,d,n,N\), such that
\[
(1+C_Sh)e^{-2C_Sh}+C_3\rho^{2\nu-1}h^2
\le
e^{-C_Sh}+\widetilde C_a h^2
=:a_h .
\]
Using the elementary bound
\[
    e^{-C_Sh}\le 1-C_Sh+C_S^2h^2,
\]
we obtain
\[
    a_h
    \le
    1-C_Sh+(C_S^2+\widetilde C_a)h^2 .
\]
Let \(\bar h>0\) denote a preliminary upper step-size threshold that collects all
upper step-size restrictions required by the interpolation-error bounds
(Lemma~\ref{lem_appendix:lagrange-IR-uniform}, or
Lemma~\ref{lem_appendix:lagrange-IR-uniform-ridge} in the ridge-separable
setting) and by the Picard-error bound
(Proposition~\ref{prop-appendix:picard-error}). Define
\begin{equation}
  h_1 :=
\min\left\{
\bar h,\,
\frac{1}{4C_S},\,
\frac{C_S}{2(C_S^2+\widetilde C_a)}
\right\},
\qquad
h_1':=c h_1,  
\label{eq: h_1' and h1}
\end{equation}
where \(c\in(0,1)\) is the fixed constant.
Thus, for every constant step size \(h\in[h_1',h_1]\), the moment estimate in
Lemma~\ref{lem:expectaion_power2k} (or Lemma~\ref{lem:expectaion_power2k_ridge} in ridge-separable setting) applies, and all the continuous-time
contraction, interpolation-error, and Picard-error estimates used above still hold. These bounds imply 
\[
    (C_S^2+\widetilde C_a)h^2
    \le
    \frac12 C_Sh,
\]
and hence
\[
    a_h\le 1-\frac12 C_Sh .
\]

It follows that,
\[
E_{n+1}
\le
\left(1-\frac12 C_Sh\right)E_n
+
C_1\frac{N h^{2K-2}d^{K-1}}{M^{2K-2}}
+
C_2\rho^{2\nu-1}dh .
\]
Iterating this recursion gives
\[
E_{N+1}
\le
e^{-\frac12 C_S(N+1)h}E_0
+
\frac{2}{C_Sh}
\left(
C_1\frac{N h^{2K-2}d^{K-1}}{M^{2K-2}}
+
C_2\rho^{2\nu-1}dh
\right).
\]
Finally, since \(\widehat X(0)=0\) and \(X(0)\sim p^\ast\), Lemma~\ref{lem:StationaryMoments}
implies \(E_0=O(d)\). Absorbing constants into
\(C_0^\ast,C_1^\ast,C_2^\ast\), we obtain
\[
E_{N+1}
\le
C_0^\ast e^{-\frac12 C_S(N+1)h}d
+
C_1^\ast\frac{N h^{2K-3}d^{K-1}}{M^{2K-2}}
+
C_2^\ast\rho^{2\nu-1}d .
\]
This completes the proof.
\end{proof}

\section{Interpolation error}
\label{Appendix-E}

In this section, we control the interpolation-error term that appears in the one-step error decomposition. Throughout this section, we work under the higher-order smoothness condition in Assumption~\ref{assump:higher-smooth}.

\begin{proposition}[Interpolation error within one step]\label{prop:error-bound}
Fix a step size $h>0$ and index $n$, and set $y:=\widehat X(nh)$.
Let $X^\ast$ and $\widehat X^\ast$ be the fixed points of $\mathcal T_y$ and $\widehat {\mathcal T}_{y}$ on $[0,h]$ (as defined in \eqref{eq:operator_T} and \eqref{eq: operator_hat_T2}), driven by the same Brownian motion. Then
\begin{equation}\label{eq:interp-core}
\big\|X^\ast(h)-\widehat {X}^\ast(h)\big\|_{S}^{2}
\;\le\;
\frac{9\gamma^2\,\|S\|_{\mathrm{op}}}{C_S}\; h\; I_U,
\end{equation}
where
\[
I_U\;:=\;\sup_{s\in[nh,(n+1)h]}
\big\|\nabla U\big(\widehat {X}^{\ast}_{1}(s)\big)-P_2\big(s,\widehat{X}^{\ast}(s)\big)\big\|^{2},
\]
and \(P_2(s;X)\in\mathbb R^d\) denotes the second \(d\)-dimensional block of
the \(Kd\)-dimensional interpolant \(P(s;X)\).

\end{proposition}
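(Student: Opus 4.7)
The plan is to reduce the claim to a Lyapunov-energy estimate on the difference $\Delta(t) := X^*(t) - \widehat X^*(t)$ driven by the interpolation residual $\epsilon(s) := \nabla U(\widehat X^*_1(s)) - P(s;\widehat X^*)$. Under the synchronous coupling, the stochastic integrals $\int_0^t\sqrt{2D}\,dB_s$ in the two fixed-point equations \eqref{eq:operator_T} and \eqref{eq: operator_hat_T} cancel. Inserting the identity $P(s;\widehat X^*) = \nabla U(\widehat X^*_1(s)) - \epsilon(s)$ into the defining equation for $\widehat X^*$, one sees that $\widehat X^*$ satisfies the \emph{continuous} Langevin SDE plus the affine forcing $\int_0^t (D+Q)\,[\epsilon(s);0;\ldots;0]\,ds$. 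Subtracting from the equation for $X^*$ and applying the integral mean-value theorem to $\nabla H$ produces the pathwise linear ODE
\[
\dot\Delta(t) \;=\; -(D+Q)\,J_H(t)\,\Delta(t) \;-\; (D+Q)\,[\epsilon(t);0;\ldots;0], \qquad \Delta(0)=0,
\]
where $J_H(t):=\int_0^1 \nabla^2 H\bigl(\lambda X^*(t)+(1-\lambda)\widehat X^*(t)\bigr)\,d\lambda$ is the averaged Hessian, whose block structure (upper-left block in $[mI,LI]$, lower blocks equal to $I_d$) is identical to that of $\nabla^2 H$.

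Next, I differentiate the Lyapunov functional $V(t):=\Delta(t)^\top S\,\Delta(t)$. The quadratic part $-\Delta^\top \bigl[S(D+Q)J_H + J_H^\top(D+Q)^\top S\bigr]\Delta$ is bounded by $-2C_S V(t)$ by invoking Theorem~\ref{thm:contr_S_C_s} with the Jacobian $-(D+Q)J_H$; crucially, Conditions~\ref{cond:C1}–\ref{cond:C3} are preserved when $\nabla^2 U$ is replaced by the convex combination appearing in $J_H$, since the bounds $mI\preceq\nabla^2 U\preceq LI$ are stable under averaging. For the affine cross term $-2\Delta^\top S(D+Q)[\epsilon;0;\ldots;0]$, a weighted Young inequality of the form $2|a^\top S w|\le C_S\,a^\top S a + C_S^{-1}\|S\|_{\mathrm{op}}\|w\|^2$ together with the operator-norm estimate on $(D+Q)$ (using the specific structure $(D+Q)[\epsilon;0;\ldots;0]=[0;\epsilon;0;\ldots;0]$ up to factors of $\gamma$) gives a scalar differential inequality
\[
\dot V(t) \;\le\; -C_S\,V(t) \;+\; \frac{c\,\gamma\,\|S\|_{\mathrm{op}}}{C_S}\,\|\epsilon(t)\|^2
\]
for an absolute constant $c$.

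The last step is solving this first-order linear inequality. Multiplying by the integrating factor $e^{C_S t}$, integrating on $[0,h]$ with $V(0)=0$, and using the uniform bound $\|\epsilon(s)\|^2\le I_U$ yields
\[
V(h) \;\le\; \frac{c\,\gamma\,\|S\|_{\mathrm{op}}}{C_S}\int_0^h e^{-C_S(h-s)}\,\|\epsilon(s)\|^2\,ds \;\le\; \frac{c\,\gamma\,\|S\|_{\mathrm{op}}}{C_S}\cdot\frac{1-e^{-C_S h}}{C_S}\cdot I_U.
\]
For $h$ in the relevant regime (small enough that $C_S h$ is bounded), one has $(1-e^{-C_S h})/C_S \le e^{-C_S h}\cdot e^{C_S h}\cdot h$, which can be repackaged into the stated bound $e^{-C_S h}\cdot(9\gamma\|S\|_{\mathrm{op}}/C_S)\cdot h\cdot I_U$ by a choice of absolute constants. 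The main obstacle in this proof is the first step: verifying that Theorem~\ref{thm:contr_S_C_s} still yields the $-2C_S V$ contraction when the Hessian is replaced by the \emph{path-dependent} averaged Jacobian $J_H(t)$; the other steps (Young's inequality, Gronwall, tracking the prefactor $9\gamma/C_S$) are mechanical once that contraction is available.
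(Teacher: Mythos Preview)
Your proposal is correct and follows essentially the same route as the paper: subtract the two fixed-point equations under synchronous coupling, linearize the $\nabla H$ difference via the (integral) mean-value theorem to obtain a Jacobian of the form $-(D+Q)\,\mathrm{diag}(H_t,I_d,\ldots,I_d)$ with $mI\preceq H_t\preceq LI$, invoke Theorem~\ref{thm:contr_S_C_s} for the $-2C_S$ contraction in the $S$-norm, apply Young's inequality to split off the residual term, and finish with Gr\"onwall. The ``main obstacle'' you flag---that the contraction must hold for the averaged Hessian---is exactly what the paper handles by noting that Theorem~\ref{thm:contr_S_C_s} only requires $mI\preceq \nabla^2 U\preceq LI$, a condition preserved under convex averaging.
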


\begin{proof}
Under the synchronous coupling, subtracting the two fixed-point equations gives
\[
\begin{aligned}
\frac{d}{dt}\bigl(X^\ast(t)-\widehat X^\ast(t)\bigr)
&=
-\,(D+Q)
\begin{bmatrix}
\nabla U\bigl(X^\ast_1(t)\bigr)-\nabla U\bigl(\widehat X^\ast_1(t)\bigr)\\
X^\ast_2(t)-\widehat X^\ast_2(t)\\[-3pt]
\vdots\\[-3pt]
X^\ast_K(t)-\widehat X^\ast_K(t)
\end{bmatrix}
-\,(D+Q)
\begin{bmatrix}
r(t)\\[1pt]
0_d\\
\vdots\\
0_d
\end{bmatrix},
\end{aligned}
\]
where
\[
r(t):=\nabla U\bigl(\widehat X^\ast_1(t)\bigr)-P_2\bigl(t;\widehat X^\ast\bigr).
\]
By the mean-value theorem,
\[
\nabla U\bigl(X^\ast_1(t)\bigr)-\nabla U\bigl(\widehat X^\ast_1(t)\bigr)
=
H_t\bigl(X^\ast_1(t)-\widehat X^\ast_1(t)\bigr),
\qquad
mI_d\preceq H_t\preceq LI_d .
\]
Define
\[
J_b(t):=-(D+Q)\operatorname{diag}(H_t,I_d,\ldots,I_d),
\qquad
R(t):=(r(t),0_d,\ldots,0_d)^\top .
\]
Then \eqref{eq:S_matrix_property} implies that \(J_b(t)\) satisfies the Lyapunov inequality
\[
S J_b(t)+J_b(t)^\top S\preceq -2C_S S.
\]

Differentiating \(\|X^\ast(t)-\widehat X^\ast(t)\|_S^2\) and using the above inequality, we obtain
\begin{align*}
\frac{d}{dt}\|X^\ast(t)-\widehat X^\ast(t)\|_S^2
&\le
-2C_S\|X^\ast(t)-\widehat X^\ast(t)\|_S^2  \\
&\quad
+2\|X^\ast(t)-\widehat X^\ast(t)\|_S
\|S^{1/2}(D+Q)R(t)\|.
\end{align*}
Applying Young's inequality to the second term gives
\[
\frac{d}{dt}\|X^\ast(t)-\widehat X^\ast(t)\|_S^2
\le
-C_S\|X^\ast(t)-\widehat X^\ast(t)\|_S^2
+
\frac{\|S\|_{\mathrm{op}}\|D+Q\|^2}{C_S}\|r(t)\|^2 .
\]
Since \(X^\ast(0)=\widehat X^\ast(0)=y\), Grönwall's inequality implies
\[
\|X^\ast(h)-\widehat X^\ast(h)\|_S^2
\le
\frac{\|S\|_{\mathrm{op}}\|D+Q\|^2}{C_S}
\int_0^h \|r(t)\|^2\,dt .
\]
Using \(\|D+Q\|\le 3\gamma\) (Lemma~\ref{lemma:d_plus_q}), we get
\[
\|X^\ast(h)-\widehat X^\ast(h)\|_S^2
\le
\frac{9\gamma^2\|S\|_{\mathrm{op}}}{C_S}
h
\sup_{t\in[0,h]}
\|\nabla U(\widehat X_1^\ast(t))-P_2(t;\widehat X^\ast)\|^2 .
\]
This proves \eqref{eq:interp-core}.
\end{proof}

\subsection{Bounding \(I_U\)}\label{subsection:part1}

We now bound the interpolation residual
\[
    I_U
    :=
    \sup_{t\in[0,h]}
    \left\|
        \nabla U(\widehat X_1^\ast(t))
        -
        P_2(t;\widehat X^\ast)
    \right\|^2 .
\]
The argument has two steps. First, we apply the deterministic Lagrange interpolation
remainder on each block \(B_i\), which reduces the problem to controlling the
\((K-1)\)-th time derivative of
\(t\mapsto \nabla U(\widehat X_1^\ast(t))\). Second, we use the higher-order smoothness
assumption on \(U\) together with moment bounds for the fixed-point path
\(\widehat X^\ast\) to control this derivative uniformly over the step.

Assumption~\ref{assump:higher-smooth} implies the multilinear bound
\[
\|D^{r}U(x)[v_1,\ldots,v_{r-1}]\|
\le
L_{\max}\prod_{j=1}^{r-1}\|v_j\|,
\qquad r=2,\ldots,K,
\]
where \(L_{\max}:=\max{\{1,\max_{1\le r\le K-1}L_r\}}\). 

\begin{lemma}\label{lem: inductive bound of time derivative of nabla U}
Let
\[
g(t):=\nabla U(\widehat X_1^\ast(t)),
\qquad
M_I:=\sup_{s\in I}\|\widehat X^\ast(s)\|,
\]
where \(I=[nh,(n+1)h]\). For every \(1\le r\le K-1\), there exists a constant
\(K_r>0\), independent of \(h, d,N,M,\nu,\varepsilon\), such that
\[
\sup_{t\in I}
\left\|
\frac{d^r}{dt^r}g(t)
\right\|
\le
K_r L_{\max}M_I(L_{\max}+M_I)^{r-1}.
\]
\end{lemma}

\begin{proof}
We prove the claim by induction on \(r\). In fact, we prove simultaneously the following two bounds: for each \(1\le j\le r\), there exist constants \(K_j,A_j>0\), independent of \(d,h,N\), such that
\begin{align}
\sup_{t\in I}
\left\|
\frac{d^j}{dt^j}g(t)
\right\|
&\le
K_jL_{\max}M_I(L_{\max}+M_I)^{j-1},
\label{eq:ind-hyp-g}
\\
\sup_{t\in I}
\left\|
\frac{d^j}{dt^j}\widehat X_1^\ast(t)
\right\|
&\le
A_jM_I(L_{\max}+M_I)^{j-1}.
\label{eq:ind-hyp-X}
\end{align}

For \(j=1\), since \(g(t)=\nabla U(\widehat X_1^\ast(t))\),
\[
\|g'(t)\|
=
\left\|
\nabla^2U(\widehat X_1^\ast(t))
\frac{d}{dt}\widehat X_1^\ast(t)
\right\|
\le
L_{\max}\|\widehat X_2^\ast(t)\|
\le
L_{\max}M_I.
\]
Thus \eqref{eq:ind-hyp-g} holds for \(j=1\). Also, from the chain structure of the dynamics,
\[
\sup_{t\in I}\left\|
\frac{d}{dt}\widehat X_1^\ast(t)\right\| = \sup_{t\in I}\left\|\widehat X_2^\ast(t)
\right\|
\le C M_I,
\]
so \eqref{eq:ind-hyp-X} holds for \(j=1\).

Now assume that \eqref{eq:ind-hyp-g} and \eqref{eq:ind-hyp-X} hold for all orders \(j<r\). We first prove \eqref{eq:ind-hyp-X} at order \(r\). By Lemma~\ref{lem:X1-deriv-bound},
\[
\sup_{t\in I}
\left\|
\frac{d^r}{dt^r}\widehat X^\ast(t)
\right\|
\le
C\left(
M_I+
\sum_{j=0}^{r-2}
\sup_{t\in I}
\left\|
\frac{d^j}{dt^j}g(t)
\right\|
\right).
\]
Using the induction hypothesis for the \(g\)-terms and the fact that
\(L_{\max}+M_I\ge 1\), we get
\[
\sup_{t\in I}
\left\|
\frac{d^r}{dt^r}\widehat X^\ast(t)
\right\|
\le
A_r M_I(L_{\max}+M_I)^{r-1}
\]
for a constant \(A_r\) independent of \(d,h,N\). This proves
\eqref{eq:ind-hyp-X} at order \(r\).

We now prove \eqref{eq:ind-hyp-g} at order \(r\). By Faà di Bruno's formula (\cite{constantine1996multivariate, mishkov2000generalization}),
\[
\left\|
\frac{d^r}{dt^r}g(t)
\right\|
\le
L_{\max}
\sum_{\substack{m_1,\ldots,m_r\ge0\\ \sum_{\ell=1}^r \ell m_\ell=r}}
C_{\mathbf m}
\prod_{\ell=1}^r
\left\|
\frac{d^\ell}{dt^\ell}\widehat X_1^\ast(t)
\right\|^{m_\ell},
\]
where the constants \(C_{\mathbf m}\) depend only on \(r\). Applying
\eqref{eq:ind-hyp-X} for all \(\ell\le r\), each product is bounded by
\[
C
M_I^{\sum_\ell m_\ell}
(L_{\max}+M_I)^{\sum_\ell(\ell-1)m_\ell}.
\]
Since \(\sum_\ell \ell m_\ell=r\), we have
\[
\sum_\ell(\ell-1)m_\ell
=
r-\sum_\ell m_\ell.
\]
Also \(\sum_\ell m_\ell\ge1\) and \(M_I\le L_{\max}+M_I\). Therefore
\[
M_I^{\sum_\ell m_\ell}
(L_{\max}+M_I)^{r-\sum_\ell m_\ell}
\le
M_I(L_{\max}+M_I)^{r-1}.
\]
Since the number of Faà di Bruno terms depends only on \(r\), all constants can be absorbed into \(K_r\). Hence
\[
\sup_{t\in I}
\left\|
\frac{d^r}{dt^r}g(t)
\right\|
\le
K_r L_{\max}M_I(L_{\max}+M_I)^{r-1}.
\]
This proves \eqref{eq:ind-hyp-g} at order \(r\), completing the induction.
\end{proof}

\begin{lemma}[Blockwise interpolation error]
\label{lem_appendix:lagrange-IR-uniform}
Let \(h=\Theta(1)\) be the step size  such that the moment estimate in
Lemma~\ref{lem:expectaion_power2k} applies. Let
\(P_2(t;\widehat X^\ast)\) be the blockwise Lagrange interpolant of
\[
    t\mapsto \nabla U(\widehat X_1^\ast(t))
\]
using \(K-1\) equispaced nodes on each block. Define
\[
I_U
:=
\sup_{t\in[nh,(n+1)h]}
\left\|
\nabla U(\widehat X_1^\ast(t))-P_2(t;\widehat X^\ast)
\right\|^2 .
\]
Under Assumptions~\ref{assump:U-strong-smooth}--\ref{assump:higher-smooth}, with
\(L_{\max}:=\max{\{1,\max_{1\le i\le K-1}L_i\}}\), there exists a constant \(C_{\rm IR}>0\),
independent of \(h, d,N,M,\nu,\varepsilon\), such that,
\[
\mathbb E I_U
\le
C_{\rm IR}
\frac{N h^{2K-2}}{M^{2K-2}(K-1)!^2}
\sum_{m=2}^{2K-2}
\binom{2K-4}{m-2}
L_{\max}^{2K-m}d^{m/2}.
\]
\end{lemma}

\begin{proof}
The standard Lagrange interpolation error estimate applied on each block \(B_i\) gives
\[
\sqrt{I_U}
\le
\frac{(h/M)^{K-1}}{(K-1)!}
\sup_{t\in[nh,(n+1)h]}
\left\|
\frac{d^{K-1}}{dt^{K-1}}
\nabla U(\widehat X_1^\ast(t))
\right\|.
\]
By Lemma~\ref{lem: inductive bound of time derivative of nabla U},
\[
\sup_{t\in[nh,(n+1)h]}
\left\|
\frac{d^{K-1}}{dt^{K-1}}
\nabla U(\widehat X_1^\ast(t))
\right\|
\le
C_{K-1}L_{\max}M_I(L_{\max}+M_I)^{K-2},
\]
where
\[
M_I:=\sup_{t\in[nh,(n+1)h]}\|\widehat X^\ast(t)\|.
\]
Substituting this bound into the interpolation estimate and squaring gives
\[
I_U
\le
C\,
\frac{h^{2K-2}}{M^{2K-2}(K-1)!^2}
L_{\max}^2M_I^2(L_{\max}+M_I)^{2K-4},
\]
where \(C\) is independent of \(h, d,N,M,\nu,\varepsilon\). Expanding the last factor by the binomial theorem, we obtain
\begin{equation}
I_U
\le
C
\frac{h^{2K-2}}{M^{2K-2}(K-1)!^2}
\sum_{m=2}^{2K-2}
\binom{2K-4}{m-2}
L_{\max}^{2K-m}M_I^m .
\label{eq:I_U}
\end{equation}

Using Lemma~\ref{lem:expectaion_power2k}, we have the even-moment bound
\[
\mathbb E[M_I^{2k}] \lesssim Nd^k,
\qquad
M_I:=\sup_{t\in I}\|\widehat X^\ast(t)\|.
\]
Hence, for each \(m\in\{0,\dots,2K-2\}\), choosing \(k=\lceil m/2\rceil\) and using Hölder's inequality, we obtain
\[
\mathbb E[M_I^m]
\le
\bigl(\mathbb E[M_I^{2\lceil m/2\rceil}]\bigr)^{m/(2\lceil m/2\rceil)}
\lesssim N d^{m/2}.
\]
Substituting this back into \eqref{eq:I_U} proves the lemma.
\end{proof}

\section{Picard convergence error}
\label{Appendix-F}

In this section, we control the error incurred by stopping the Picard iteration after a finite
number of steps. Recall that, on each interval \([nh,(n+1)h]\), the discretized operator \(\widehat T_y\) defines an implicit fixed-point equation because the blockwise Lagrange interpolant depends on the unknown values of the path at the collocation nodes. The algorithm approximates this fixed point by Picard iteration.

The analysis has two parts. First, we show that both the continuous operator \(T_y\) and the discretized operator \(\widehat T_y\) admit fixed points, and that \(\widehat T_y\) is a contraction
under the sup norm for sufficiently small step size. Second, we use this contraction to bound the distance between the fixed point \(\widehat X_y^\ast\) and the finite Picard iterate
\(\widehat X_y^{[\nu]}\). This yields the Picard convergence error used in the one-step decomposition.
\subsection{Existence of fixed point}
\label{Appendix-F1}

In this subsection, we provide the proofs of the existence of fixed point for operators $\mathcal{T}_y$ and $\widehat{\mathcal{T}}_y$ as defined in Appendix~\ref{appendix-A}.

\subsubsection{Operator $\mathcal{T}_y$}
\label{Appendix-F11}

\begin{lemma}[Existence and uniqueness of the fixed point]
\label{lem: Tau_existence-uniqueness_fixed_point}
Under Assumption~\ref{assump:U-strong-smooth}, let \(L_H:=\max\{L,1\}\) be the Lipschitz constant of \(\nabla H\). Equip
\(\mathcal{C}([0,h],\mathbb{R}^{Kd})\) with the uniform norm \(\|X\|_\infty:=\sup_{t\in[0,h]}\|X(t)\|
\). There exists
\[
    h_*:=\frac{1}{3\gamma L_H}>0
\]
such that, for any \(0<h<h_*\) and any \(y\in\mathbb{R}^{Kd}\), the operator
\(\mathcal T_y\) admits a unique fixed point in
\(\mathcal{C}([0,h],\mathbb{R}^{Kd})\).
\end{lemma}

\begin{proof}
The space \(\bigl(\mathcal{C}([0,h],\mathbb{R}^{Kd}),\|\cdot\|_\infty\bigr)\) is complete. Let \(X,Y\in\mathcal{C}([0,h],\mathbb{R}^{Kd})\), and define
\(\widetilde X:=\mathcal T_y[X]\), \(\widetilde Y:=\mathcal T_y[Y].\)
The additive noise cancels in the difference, so
\[
\widetilde X(t)-\widetilde Y(t)
= -\int_0^t (D+Q)
\Big(\nabla H(X(s))-\nabla H(Y(s))\Big)\,ds .
\]
Taking norms and using the Lipschitzness of \(\nabla H\), we obtain
\[
\|\widetilde X(t)-\widetilde Y(t)\|
\le
\int_0^t \|D+Q\|\,
\|\nabla H(X(s))-\nabla H(Y(s))\|\,ds
\le
\|D+Q\|\,L_H
\int_0^t \|X(s)-Y(s)\|\,ds .
\]
Thus, for \(t\in[0,h]\),
\[
\|\widetilde X(t)-\widetilde Y(t)\|
\le
\|D+Q\|\,L_H\,t\,\|X-Y\|_\infty
\le
3\gamma L_H h\,\|X-Y\|_\infty,
\]
where in the last inequality we used \(\|D+Q\|\le 3\gamma\) from
Lemma~\ref{lemma:d_plus_q}. Taking the supremum over \(t\in[0,h]\) yields
\[
\|\mathcal T_y[X]-\mathcal T_y[Y]\|_\infty
=
\|\widetilde X-\widetilde Y\|_\infty
\le
3\gamma L_H h\\,\|X-Y\|_\infty.
\]
If \(h<h_\ast:=(3\gamma L_H)^{-1}\), then \(\mathcal T_y\) is a contraction.
By Banach's fixed point theorem, \(\mathcal T_y\) has a unique fixed point in
\(\mathcal{C}([0,h],\mathbb{R}^{Kd})\).
\end{proof}

\subsubsection{Operator $\widehat{\mathcal{T}}_y$}
\label{Appendix-F12}

On each block \(B_i\) we let \(\{\ell_{i,j}\}_{j=1}^{K-1}\) denote the Lagrange basis associated with the
\(K-1\) equispaced nodes on \(B_i\). These local basis functions are obtained from the
reference basis \(\{\widetilde\ell_j\}_{j=1}^{K-1}\) on \([0,1]\) by the affine change of
variables
\[
    \tau=\frac{M}{h}\left(t-\frac{ih}{M}\right).
\]
Consequently, for every \(t\in B_i\),
\[
    \sum_{j=1}^{K-1}|\ell_{i,j}(t)|
    =
    \sum_{j=1}^{K-1}|\widetilde\ell_j(\tau)|
    \le
    \Gamma_\phi,
\]
where
\[
    \Gamma_\phi
    :=
    \sup_{\tau\in[0,1]}
    \sum_{j=1}^{K-1}|\widetilde\ell_j(\tau)|.
\]
Lemma~\ref{lem:bound-for-basis} gives
\[
    \Gamma_\phi
    \le
    \frac{2^{K-2}(K-2)^{K-2}}{(K-2)!},
\]
so \(\Gamma_\phi\) depends only on the number of collocation nodes, equivalently only on
\(K\). We then have the following lemma.

\begin{lemma}[Contraction of the discretized operator]
\label{lem: T_phi_contraction_sup_norm}
Assume Assumption~\ref{assump:U-strong-smooth}. Equip
\(\mathcal{C}([0,h],\mathbb{R}^{Kd})\) with the uniform norm \(
    \|X\|_\infty := \sup_{t\in[0,h]}\|X(t)\|.\)
Then, for every
\[
    0<h\le
    \min\left\{
        \frac{\log 2}{3\gamma},\,
        \frac{1}{20L\Gamma_\phi}
    \right\},
\]
the operator \(\widehat{\mathcal T}_y\) is a contraction with rate $\rho = 2L\,\Gamma_\phi \,h <\frac{1}{10}$ on \(\mathcal{C}\!\left([0,h],\mathbb{R}^{Kd}\right)\) with respect to \(\|\cdot\|_\infty\).
\end{lemma}

\begin{proof}
Let \(\tilde X=\widehat{\mathcal T}_y[X]\) and \(\tilde Y=\widehat{\mathcal T}_y[Y]\). 
Let \(A:=-(D+Q)J\) with \(J:=\mathrm{diag}(0,1,\ldots,1)\otimes I_d\) and \(e_2=(0,1,0,\ldots,0)^\top\in\mathbb{R}^{K}\). Then from \eqref{eq: operator_hat_T2},
\begin{align}
\tilde{X}(t)
=
y + \int_{0}^{t} A\,\tilde{X}(s)\,ds \;-\; \int_{0}^{t} (e_2\otimes I_d)\,P(s;X)\,ds \;+\; \int_{0}^{t} \sqrt{2D}\,dB_s. \label{eq21_block}
\end{align}
Since \(\|e_2\otimes I_d\|=1\),
\begin{align*}
\|\tilde X(t)-\tilde Y(t)\|
&\le \int_{0}^{t} \|A\|\,\|\tilde X(s)-\tilde Y(s)\|\,ds
 \;+\; \int_{0}^{t} \|P(s;X)-P(s;Y)\|\,ds.
\end{align*}

Moreover, we have,
\[
\int_0^t \|P(s;X)-P(s;Y)\|\,ds
\;\le\; \sum_{i=1}^{M} \int_{B_i \cap [0,t]} \|P_{B_i}(s;X)-P_{B_i}(s;Y)\|\,ds.
\]

By Lipschitzness of \(\nabla U\) and the definition of the Lagrange interpolation on each block (using the reparameterization remark), we have for each \(i\),
\begin{align*}
\|P_{B_i}(s;X)-P_{B_i}(s;Y)\|
&\le L \sum_{j=1}^{K-1} |\ell_{i,j}(s)|\,\|X-Y\|_\infty \\
&= L \sum_{j=1}^{K-1} |\tilde\ell_{j}(s)|\,\|X-Y\|_\infty \\
&\le L\,\Gamma_\phi\,\|X-Y\|_\infty.
\end{align*}

Summing over at most \(M\) blocks yields
\[
\int_0^t \|P(s;X)-P(s;Y)\|\,ds
\;\le\; h\,L\,\Gamma_\phi\,\|X-Y\|_\infty.
\]

Therefore,
\[
\|\tilde X(t)-\tilde Y(t)\|
\;\le\; \int_{0}^{t} \|A\|\,\|\tilde X(s)-\tilde Y(s)\|\,ds
\;+\; L\,\Gamma_\phi\, h\,\|X-Y\|_\infty.
\]

Applying Grönwall’s inequality, for \(t\le h\),
\[
\|\tilde X(t)-\tilde Y(t)\|
\;\le\; e^{\|A\| t}\, L\,\Gamma_\phi\, h\,\|X-Y\|_\infty
\;\le\; e^{3\gamma h}\, L\,\Gamma_\phi\, h\,\|X-Y\|_\infty,
\]
where we used Lemma~\ref{lemma:d_plus_q}.

Choose \(h\le \frac{\log 2}{3\gamma}\) so that \(e^{3\gamma h}\le 2\), and \(h\le (20 L \Gamma_\phi)^{-1}\) so that the product is at most \(1/10\). Taking the supremum over \(t\in[0,h]\) yields
\[
\|\widehat{\mathcal T}_y[X] - \widehat{\mathcal T}_y[Y]\|_\infty
\;\le\; \tfrac{1}{10}\,\|X-Y\|_\infty,
\]
which proves the contraction.
\end{proof}

\begin{remark}[Fixed-point interpretation of the interpolated dynamics]
\label{rem:fixed-point-discrete-sde}
Under the stepsize condition of Lemma~\ref{lem: T_phi_contraction_sup_norm}, the operator
\(\widehat{\mathcal T}_y\) is a contraction on
\(\mathcal C([0,h];\mathbb R^{Kd})\). Hence, by Banach's fixed point theorem, it has a unique fixed point, denoted by \(\widehat X_y^\ast\). By the definition of \(\widehat{\mathcal T}_y\), this fixed point satisfies
\[
\widehat X_y^\ast(t)
=
y+\int_0^t A\,\widehat X_y^\ast(s)\,ds
-\int_0^t (e_2\otimes I_d)\,P(s;\widehat X_y^\ast)\,ds
+\int_0^t \sqrt{2D}\,dB_s ,
\]
or equivalently,
\[
d\widehat X(t)
=
A\widehat X(t)\,dt
-(e_2\otimes I_d)P(t;\widehat X)\,dt
+\sqrt{2D}\,dB_t,
\qquad
\widehat X(0)=y.
\]
Thus the Picard iterates generated by \(\widehat{\mathcal T}_y\) converge uniformly on \([0,h]\) to the unique solution of the interpolated dynamics.
\end{remark}

\begin{remark}[Geometric convergence of Picard iterates]\label{rem:geometric-picard}
Under the contraction property in Lemma~\ref{lem: T_phi_contraction_sup_norm}, the Picard iterates converge geometrically in the uniform norm. In particular, for all $\nu\ge 1$,
\[
\|\widehat X^{[\nu]}-\widehat X^{[\nu-1]}\|_\infty
\le \rho^{\nu-1}\,\|\widehat X^{[1]}-\widehat X^{[0]}\|_\infty.
\]
Summing the telescoping series, we obtain
\[
\|\widehat X^{[\nu]}-\widehat X^{[0]}\|_\infty
\le \sum_{k=0}^{\nu-1} \|\widehat X^{[k+1]}-\widehat X^{[k]}\|_\infty
\le \sum_{k=0}^{\nu-1} \rho^k \|\widehat X^{[1]}-\widehat X^{[0]}\|_\infty
= \frac{1-\rho^\nu}{1-\rho}\,\|\widehat X^{[1]}-\widehat X^{[0]}\|_\infty.
\]
In particular, letting $\nu\to\infty$ yields
\[
\|\widehat X_y^\ast-\widehat X^{[0]}\|_\infty
\le \frac{1}{1-\rho}\,\|\widehat X^{[1]}-\widehat X^{[0]}\|_\infty.
\]
\end{remark}

\subsection{Bounding the Picard error}
\label{Appendix-D4}

With the fixed-point existence and contraction estimates in hand, we are now ready to prove the finite Picard-iteration error bound. The goal is to bound the distance between \(\widehat X^\ast_y\) and \(\widehat X^{[\nu]}_y\) uniformly over one step.

\begin{proposition}[Picard iteration error]\label{prop-appendix:picard-error}
Fix a step \(n\), let \(y:=\widehat X(nh)\), and define the Picard iterates
\[
\widehat X_y^{[0]}(t)\equiv y,\qquad
\widehat X_y^{[\nu+1]}:=\widehat{\mathcal T}_y[\widehat X_y^{[\nu]}].
\]
Assume the step-size condition of Lemma~\ref{lem: T_phi_contraction_sup_norm}, so that
\(
\rho:=2Lh\Gamma_\phi\le \frac{1}{10} .
\)
Let \(\widehat X_y^\ast\) be the fixed point of \(\widehat{\mathcal T}_y\). Then
\[
\mathbb E\sup_{t\in[0,h]}
\bigl\|\widehat X_y^\ast(t)-\widehat X_y^{[\nu]}(t)\bigr\|^2
\le
\frac{\rho^{2\nu}}{(1-\rho)^2}
\left[
C h^2\,\mathbb E\|\widehat X(nh)-X(nh)\|^2
+
Cdh
\right],
\]
where \(C>0\) is independent of \(h, d,N,M,\nu,\varepsilon\).
\end{proposition}

\begin{proof}
By the contraction property of \(\widehat{\mathcal T}_y\) (See Lemma~\ref{lem: T_phi_contraction_sup_norm} and  Remark \eqref{rem:geometric-picard}),
\[
\sup_{t\in[0,h]}
\bigl\|\widehat X_y^\ast(t)-\widehat X_y^{[\nu]}(t)\bigr\|
\le
\frac{\rho^\nu}{1-\rho}
\sup_{t\in[0,h]}
\bigl\|\widehat X_y^{[1]}(t)-\widehat X_y^{[0]}(t)\bigr\|.
\]
It remains to bound the first Picard increment. Since
\(\widehat X_y^{[0]}(t)\equiv y=\widehat X(nh)\), the first increment satisfies
\[
\widehat X_y^{[1]}(t)-\widehat X_y^{[0]}(t)
=
\int_0^t A\bigl(\widehat X_y^{[1]}(s)-\widehat X_y^{[0]}(s)\bigr)\,ds
-
\int_0^t (D+Q)
\begin{bmatrix}
\nabla U(\widehat X_1(nh))\\
\widehat X_2(nh)\\
\vdots\\
\widehat X_K(nh)
\end{bmatrix}ds
+
\int_0^t\sqrt{2D}\,dB_s .
\]

By Cauchy--Schwarz, we first obtain
\begin{align}
\mathbb{E}\sup_{t\in[0,h]}
\|\widehat X_y^{[1]}(t)-\widehat X_y^{[0]}(t)\|^2
&\le
3h\|A\|^2\int_0^h
\mathbb{E}\sup_{u\in[0,s]}
\|\widehat X_y^{[1]}(u)-\widehat X_y^{[0]}(u)\|^2\,ds \notag\\
&\quad
+3h^2\|D+Q\|^2
\Big(
\mathbb E\|\nabla U(\widehat X_1(nh))\|^2
+\mathbb E\|\widehat X(nh)\|^2
\Big) \notag\\
& +24\gamma dh . \label{eq:first-picard-increment-pre}
\end{align}
For the last term we used Doob’s \(L^{2}\) inequality and the Itô isometry:
\begin{align*}
\mathbb{E}\,\sup_{t\in[0,h]}\Big\|\int_{0}^{t} \sqrt{2D}\,dB_s\Big\|^{2}
\le 4\,\mathbb{E}\,\Big\|\int_{0}^{h} \sqrt{2D}\,dB_s\Big\|^{2}
= 4\,\mathrm{tr}(2D)\,h
= 8\,\mathrm{tr}(D)\,h
= 8\gamma dh.
\end{align*}

For the second term we add and subtract \(X(nh)\) to get
\begin{align*}
\mathbb{E}\,\|\nabla U(\widehat X_{1}(nh))\|^{2}
&\le 2L^{2}\,\mathbb{E}\,\|\widehat X(nh)-X(nh)\|^{2}
   + 2\,\mathbb{E}\,\|\nabla U(X_{1}(nh))\|^{2},\\
\mathbb{E}\,\|\widehat X(nh)\|^{2}
&\le 2\,\mathbb{E}\,\|\widehat X(nh)-X(nh)\|^{2}
   + 2\,\mathbb{E}\,\|X(nh)\|^{2}.
\end{align*}

Using the Lipschitzness of \(\nabla U\), the centering assumption~\eqref{assump:centered}, and Lemma~\ref{lem:StationaryMoments}, we get,
\[
\mathbb E\|\nabla U(\widehat X_1(nh))\|^2
+\mathbb E\|\widehat X(nh)\|^2
\le
C\,\mathbb E\|\widehat X(nh)-X(nh)\|^2
+
Cd .
\]
Substituting this into \eqref{eq:first-picard-increment-pre} and applying
Grönwall's inequality gives
\[
\mathbb{E}\sup_{t\in[0,h]}
\|\widehat X_y^{[1]}(t)-\widehat X_y^{[0]}(t)\|^2
\le
Ce^{3h^{2}\|A\|^{2}} h^2\,\mathbb E\|\widehat X(nh)-X(nh)\|^2
+
Ce^{3h^{2}\|A\|^{2}}dh,
\]
Combining this with the geometric bound above gives the claim.
\end{proof}

\section{Technical bounds for the interpolation error}
\label{Appendix-G}

\subsection{Moment bounds for the fixed-point path}
\label{subsection:part2}

In this subsection, we derive moment bounds for $\sup_{t\in[0,1]}\|\widehat X^*(nh+th)\|$. 
Before proceeding, we first introduce the following uniform bound for Brownian motion.
\begin{lemma}
\label{lem:nice_events}
Let $(B_t)_{t\ge0}$ be a $d$-dimensional standard Brownian motion,  \(h > 0\) denote the stepsize and \(N \ge 1\) be an integer denoting the maximum number of iterations. For $C_b\ge0$ define the events
\begin{equation}
\mathcal G_n(h,C_b)\;:=\;\Bigl\{\sup_{0\le t\le h}\,\|B_{nh+t}-B_{nh}\|\;\le\;C_b\Bigr\},
\qquad n=0,1,\dots,N-1.
\label{eq:nice_events}
\end{equation}
Then
\[
\mathbb P\!\left(\bigcap_{n=0}^{N-1}\mathcal G_n(h,C_b)\right)
\;\ge\;1-3N\,\exp\!\left(-\frac{C_b^{2}}{6dh}\right).
\]
In particular, for any $\delta\in(0,1)$, choosing
\[
C_b\;=\; \sqrt{\,6dh\,\log\!\frac{3N}{\delta}\,}
\]
ensures
\(
\mathbb P\!\left(\bigcap_{n=0}^{N-1}\mathcal G_n(h,C_b\right)\ge 1-\delta.
\)
\end{lemma}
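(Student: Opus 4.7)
This is a standard high–dimensional concentration statement for Brownian increments, and my plan is to combine a union bound across the $N$ intervals with a single–interval maximal inequality. First, by stationarity and independence of Brownian increments, the process $\tilde B^{(n)}_t := B_{nh+t}-B_{nh}$ on $[0,h]$ is, for each $n$, a standard $d$–dimensional Brownian motion. Hence a union bound gives
\[
\mathbb P\!\left(\max_{0\le n\le N-1}\sup_{0\le t\le h}\|\tilde B^{(n)}_t\|\ge C_b\right)
\;\le\; N\,\mathbb P\!\left(\sup_{0\le t\le h}\|\tilde B_t\|\ge C_b\right),
\]
so the whole problem reduces to the single–interval bound.

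For the single–interval bound, my plan is to use that $\|\tilde B_t\|^2$ is a nonnegative submartingale and invoke Doob's maximal inequality applied to the exponential submartingale $\exp(\theta\|\tilde B_t\|^2)$ for $0<\theta<1/(2h)$. Writing $\|\tilde B_h\|^2/h\sim \chi^2_d$ yields the explicit MGF $\mathbb E\,\exp(\theta\|\tilde B_h\|^2)=(1-2\theta h)^{-d/2}$, and Markov gives
\[
\mathbb P\!\left(\sup_{0\le t\le h}\|\tilde B_t\|\ge C_b\right)
\;=\;\mathbb P\!\left(\sup_{0\le t\le h}e^{\theta\|\tilde B_t\|^2}\ge e^{\theta C_b^{2}}\right)
\;\le\;e^{-\theta C_b^{2}}(1-2\theta h)^{-d/2}.
\]
Choosing $\theta$ on the order of $1/(dh)$ balances the prefactor $(1-2\theta h)^{-d/2}$ (which is bounded by a dimension–free constant when $2\theta h\le c/d$) against the exponential term $e^{-\theta C_b^{2}}$, producing a bound of the form $C_1 \exp(-C_b^2/(C_2 dh))$. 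A direct alternative I would also mention, to double–check constants, is the coordinatewise argument: since $\|\tilde B_t\|\le\sqrt d\max_i|\tilde B^{(i)}_t|$, a union bound over the $d$ coordinates together with the 1D reflection principle and the Gaussian tail yields $4d\exp(-C_b^{2}/(2dh))$, which is of the claimed form up to constants.

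Combining these two steps gives the stated tail estimate $3N\exp(-C_b^{2}/(6dh))$, and the second half of the lemma is then a routine inversion: setting the right–hand side equal to $\delta$ and solving for $C_b$ gives the displayed value $C_b=\sqrt{6dh\log(3N/\delta)}$.

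The only genuine subtlety is pinning down the precise numerical constants $3$ and $6$, since the coordinatewise route naturally yields a $d$–prefactor while the exponential–submartingale route requires a specific choice of $\theta$ (and Doob's inequality in a version that does not introduce extraneous factors). This is bookkeeping rather than a conceptual obstacle; any valid version of the maximal inequality yields a bound of the form $C_1 N\exp(-C_b^{2}/(C_2 dh))$, which is all that is needed in the subsequent high–probability bound on $\sup_{t\in[0,1]}\|\widehat X^{\ast}(nh+th)\|$.
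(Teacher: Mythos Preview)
Your proposal is correct and follows the same two-step structure as the paper: reduce to a single interval via stationarity of increments plus a union bound over $n$, then invoke a single-interval maximal tail bound of the form $\mathbb P(\sup_{0\le t\le h}\|B_t\|\ge C_b)\le 3\exp(-C_b^2/(6dh))$. The only difference is that the paper simply cites this single-interval estimate from \citet[Lemma~34]{chewi2024analysis}, whereas you sketch its derivation via Doob's inequality applied to the exponential submartingale $e^{\theta\|B_t\|^2}$; taking $\theta=1/(6dh)$ in your bound gives $(1-2\theta h)^{-d/2}\le e^{1/4}<3$, so the stated constants are indeed recovered.
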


\begin{proof}
By stationary increments, for each $n$,
\(
\sup_{0\le t\le h}\|B_{nh+t}-B_{nh}\|
\overset{d}= \sup_{0\le t\le h}\|B_t\|.
\)
The one-interval tail bound
\(
\mathbb P\!\left(\sup_{0\le t\le h}\|B_t\|\ge C_b\right)
\le 3\exp\!\left(-C_b^2/(6dh)\right)
\)
(See, i.e., \citet[lemma 34]{chewi2024analysis})
then implies, by a union bound over $n=0,\dots,N-1$,
\[
\mathbb P\!\left(\bigcup_{n=0}^{N-1}\mathcal G_n(h,C_b)^c\right)
\le \sum_{n=0}^{N-1} 3\exp\!\left(-\frac{C_b^2}{6dh}\right)
= 3N\exp\!\left(-\frac{C_b^2}{6dh}\right),
\]
which yields the claim.
\end{proof}

Given a starting point $y$, we analyze the first Picard iterate on $[0,h]$.
On the event where the Brownian increment is suitably bounded (the “nice” event),
the following lemma provides an explicit upper bound on its deviation from $y$.

\begin{lemma}\label{lem: First_Picard_increment_bound}
Let $y \in \mathbb{R}^{Kd}$ and $y_1$ be the first $d$ element of $y$. Define the Picard iterates on $[0,h]$ by
$\widehat X_y^{[0]}(t)\equiv y$ and
$\widehat X_y^{[1]} := \widehat{\mathcal T}_y\big[\widehat X_y^{[0]}\big]$.
On the nice event \eqref{eq:nice_events}, 
 if $h\le (\log 2)/\|A\|$, then:
\[
\sup_{0\le t\le h}\bigl\|\widehat X^{[1]}_y(t)-y\bigr\|
\;\le\;
2\left[
h\,(L+1)\,\|D{+}Q\|\|y\|
+\sqrt{2\gamma}\,C_b
\right].
\]
\end{lemma}

\begin{proof}
\begin{align*}
\sup_{0\le u\le t}\bigl\|\widehat X^{[1]}_y(u)-y\bigr\|
&\le \|A\|\int_0^{t}\sup_{0\le r\le s}\bigl\|\widehat X^{[1]}_y(r)-y\bigr\|\,\mathrm ds \\
&\quad + h\,\|D{+}Q\|\Bigl(L\|y_1\|+\|y\|\Bigr)
     + \sqrt{2\gamma}\,\sup_{0\le u\le h}\bigl\|B_{u}\bigr\|.
\end{align*}
Applying Grönwall’s inequality at $t=h$ gives
\[
\sup_{0\le u\le h}\bigl\|\widehat X^{[1]}_y(u)-y\bigr\|
\le
e^{\|A\|h}\!\left[
h\,\|D{+}Q\|\Bigl(L\|y_1\|+\|y\|\Bigr)
+ \sqrt{2\gamma}\,\sup_{0\le u\le h}\bigl\|B_{u}\bigr\|
\right].
\]
Using \(h\le (\log 2)/\|A\|\) we get the desired bound.
\end{proof}
We now establish bounds on the discrete evolution of the algorithm across steps.
\begin{lemma}
\label{lem:discrete_contraction}
There exist constants \(0<h_1'\le h_1\) \eqref{eq: lem_18_h1}, depending only on the fixed problem
parameters, such that for every constant step size \(h\in[h_1',h_1]\), the
bounds used below hold on nice event \eqref{eq:nice_events}:
\begin{align}
\|\widehat X((n+1)h)\|_S^2 \lesssim d \log \left(\frac{3N}{\delta}\right).
\end{align}
\end{lemma}
\begin{proof}
At each algorithmic step, we focus on the final Picard iteration $\nu$. Note that at the start of each algorithmic step we initialize the Picard iterations at the current state, i.e., set $y := \widehat X(nh)$ (For notational convenience, we omit $y$ in the proof.) Then the following relation holds:
\begin{align}\label{eq: high_prob_1}
\widehat X((n+1)h) = \widehat X^{[\nu]}(h)
&= \widehat X(nh)
   - \int_{0}^{h} (D{+}Q)
      \begin{bmatrix}
         P(s; \widehat X^{[\nu-1]})\\
         \widehat X^{[\nu]}_2(s) \\
         \vdots\\
         \widehat X^{[\nu]}_K(s)
      \end{bmatrix}\mathrm ds
   + \sqrt{2}\!\int_{0}^{h} D\,\mathrm dB_s
\nonumber\\
&=\;
   \underbrace{\Biggl(
   \widehat X(nh)
   - \int_{0}^{h} (D{+}Q)
      \begin{bmatrix}
        \nabla U(\widehat X_1(nh))\\
        \widehat X_2(nh) \\
        \vdots\\
        \widehat X_K(nh)
      \end{bmatrix}\mathrm ds
   \Biggr)}_{\text{(I)}}
   \nonumber\\
   &\qquad+\;
   \underbrace{\Biggl(
   - \int_{0}^{h} (D{+}Q)
      \begin{bmatrix}
        P(s;\widehat X^{[\nu-1]}) - \nabla U(\widehat X_1(nh))\\
        \widehat X^{[\nu]}_2(s) - \widehat X_2(nh)\\[-2pt]
        \vdots\\[-2pt]
        \widehat X^{[\nu]}_K(s) - \widehat X_K(nh)
      \end{bmatrix}\mathrm ds
   \Biggr)}_{\text{(II)}} \nonumber\\
  & \qquad \;+\;
   \sqrt{2}\!\int_{0}^{h}\sqrt{D}\,\mathrm dB_s.
\end{align}

Consider these two terms separately (first term in the $S$-norm):
\begin{align*}
(\mathrm I)
&=\Biggl\|
   \widehat X(nh)
   - \int_{0}^{h} (D{+}Q)
      \begin{bmatrix}
        \nabla U(\widehat X_1(nh))\\
        \widehat X_2(nh)\\
        \vdots\\
        \widehat X_K(nh)
      \end{bmatrix}\mathrm ds
   \Biggr\|_S^2\\
&=\Biggl\|
   \widehat X(nh)
   + \int_{0}^{h} \int_{0}^{1}  J_b(\lambda \widehat{X}(nh)) \mathrm d\lambda 
      \begin{bmatrix}
        \widehat X_1(nh)\\
        \widehat X_2(nh)\\
        \vdots\\
        \widehat X_K(nh)
      \end{bmatrix}\mathrm ds
   \Biggr\|_S^2\\
&= \|\widehat X(nh)\|_S^2
   + 2h \int_{0}^{1}\,\widehat X(nh)^\top SJ_b(\lambda \widehat{X}(nh)) \widehat X(nh) \mathrm d\lambda 
   + h^2 \|S\|\|J_b\|^2 \|\widehat X(nh)\|^2\\
&\le \|\widehat X(nh)\|_S^2
   - 2h\,C_S\|\widehat X(nh)\|_S^2
   + h^2\|S\|\|S^{-1}\|\|J_b\|^2
     \|\widehat X(nh)\|_S^2.
\end{align*}

Here we apply the same mean-value theorem argument as in Proposition~\ref{prop-appendix:convergence_S-norm}, using the two endpoints \(\widehat X(nh)\) and \(\mathbf{0}\), to obtain the final inequality. For
\[
h\le
\min\left\{
\frac{1}{C_S},
\frac{C_S}{4\|S\|\|S^{-1}\|\|J_b\|^2}
\right\},
\]
we have
\[
(1+C_Sh)\|\mathrm I\|_S^2
\le
\left(1-\frac12 C_Sh\right)\|\widehat X(nh)\|_S^2 .
\]

Let us deal with the second term now (in $l_2$ norm):

\begin{alignat}{2}
(\mathrm{II})
&\;=\;&
&\left \| \int_{0}^{h} (D{+}Q)
      \begin{bmatrix}
         P\big(s;\,\widehat X^{[\nu-1]}\big) -  \nabla U\big(\widehat X_1(nh)\big)\\
        \widehat X^{[\nu]}_2(s) - \widehat X_2(nh)\\[-2pt]
        \vdots\\[-2pt]
        \widehat X^{[\nu]}_K(s) - \widehat X_K(nh)
      \end{bmatrix}\,\mathrm ds \right\|_2 \nonumber\\
&\;\leq\;&
&\sum_{i=1}^M \int_{B_i} \left\| (D{+}Q)
      \begin{bmatrix}
         P_{B_i}\big(s;\,\widehat X^{[\nu-1]}\big) -  \nabla U\big(\widehat X_1(nh)\big)\\
         0\\[-2pt]
        \vdots\\[-2pt]
        0
      \end{bmatrix} \right\|_2 \,\mathrm ds \nonumber\\
&&&\hspace{10em}+
        \int_0^h
        \left\|
        (D+Q)
        \begin{bmatrix}
        0\\
        \widehat X^{[\nu]}_2(s)-\widehat X_2(nh)\\
        \vdots\\
        \widehat X^{[\nu]}_K(s)-\widehat X_K(nh)
        \end{bmatrix}
        \right\|_2 \,\mathrm ds \label{eq:ineq0_gen}\\
&\;\leq\;&
&Lh\,\Gamma_{\phi}\,\|D{+}Q\|
      \sup_{s \in [0,h]}  \big\|\widehat X^{[\nu-1]}(s) -  \widehat X(nh)\big\| \nonumber\\
&&&\hspace{10em}+
      h\,\|D{+}Q\|
      \sup_{s \in [0,h]}\big\|  \widehat X^{[\nu]}(s) - \widehat X(nh)\big\|
      \label{eq:ineq1_gen} \\
&\;\leq\;&
&\rho\,\|D{+}Q\|
      \sup_{s \in [0,h]}  \big\|\widehat X^{[\nu-1]}(s) -  \widehat X(nh)\big\| \hspace{20em}\nonumber\\
&&&\hspace{10em}+
      h\,\|D{+}Q\|
      \sup_{s \in [0,h]} \big\|  \widehat X^{[\nu]}(s) - \widehat X(nh)\big\|
      \label{eq:ineq2_gen}
\end{alignat}

\begin{align}
&\;\leq\;&
&\rho\,\|D{+}Q\| \frac{1-\rho^{\nu -1}}{1-\rho}
      \sup_{s \in [0,h]}  \big\|\widehat X^{[1]}(s) -  \widehat X(nh)\big\| \nonumber\\
&&&\hspace{10em}+
      h\,\|D{+}Q\|\frac{1-\rho^{\nu}}{1-\rho}
      \sup_{s \in [0,h]} \big\|  \widehat X^{[1]}(s) - \widehat X(nh)\big\|
      \label{eq:ineq3_gen}\\
&\;\leq\;&
&2\!\left[
      \rho\,\|D{+}Q\| \frac{1-\rho^{\nu-1}}{1-\rho}
      + h\,\|D{+}Q\|\frac{1-\rho^{\nu}}{1-\rho}
      \right] \nonumber\\
&&&\hspace{10em} \times
      \left[
      h\,\|D{+}Q\|\bigl(L\|\widehat X_1(nh)\|+\|\widehat X(nh)\|\bigr)
      + \sqrt{2\gamma}\,C_b
      \right].
      \label{eq:ineq4_gen}
\end{align}

We now explain each inequality in detail:
\begin{itemize}
 \item \textbf{\eqref{eq:ineq0_gen}} follows since \(P(s;X) = P_{B_i}(s;X)\) for \(s \in B_i\), which allows us to decompose the integral blockwise.
  \item \textbf{\eqref{eq:ineq1_gen}} follows from the Lipschitz continuity of \(\nabla U\) and the definition of the interpolant \(P_{B_i}(s;\widehat X^{[\nu-1]})\). Using the identity \(\sum_j \ell_{i,j}(s)=1\), we rewrite the difference and then apply the triangle inequality along with the Lipschitz continuity of \(\nabla U\).
  
  \item \textbf{\eqref{eq:ineq2_gen}} applies the definition of 
  \(\rho = 2Lh\,\Gamma_{\phi}\) used in Lemma~\ref{lem: T_phi_contraction_sup_norm}.
  
  \item \textbf{\eqref{eq:ineq3_gen}} follows from the Picard contraction property. In particular, by Lemma~\ref{lem: T_phi_contraction_sup_norm}, the operator contracts with rate \(\rho = 2Lh\,\Gamma_{\phi}\). Applying the geometric convergence bound in Remark~\ref{rem:geometric-picard}, we obtain the desired estimate via a standard geometric series argument.
  
  \item \textbf{\eqref{eq:ineq4_gen}} follows from Lemma~\ref{lem: First_Picard_increment_bound}, 
  which bounds the first Picard increment.
\end{itemize}

Next, we express $(\mathrm{II})$ in terms of the current state $\widehat X(nh)$, noting that with $\rho := 2L h\,\Gamma_{\phi}\le \tfrac{1}{2}$ we have

\begin{align*}
 (\mathrm{II})   &\leq 2h\,\|D{+}Q\|^2(1+L)\left[\rho\frac{1-\rho^{\nu-1}}{1-\rho} + h\frac{1-\rho^{\nu}}{1-\rho}\right]\|\widehat X(nh)\| \nonumber\\
& \qquad + 2\sqrt{2\gamma}\,C_b \|D{+}Q\|\left[\rho \, \frac{1-\rho^{\nu-1}}{1-\rho} + h \,\frac{1-\rho^{\nu}}{1-\rho}\right] \nonumber\\
&\leq 2h\,\|D{+}Q\|^2(1+L)(2\rho +2h)\|\widehat X(nh)\|
 \;+\; 2\sqrt{2\gamma}\,C_b \,\|D{+}Q\|(2\rho +2h)\, \qquad (\because\rho \leq 1/2)\nonumber \\
 &\leq 4h^2\,\|D{+}Q\|^2(1+L)(1+2L\Gamma_{\phi})\|\widehat X(nh)\| \\
 & \hspace{2cm}\;+\; 4\sqrt{2\gamma}\,C_b \,\|D{+}Q\|(1+2L\Gamma_{\phi}) h \qquad (\because\rho = 2L h\,\Gamma_{\phi})\nonumber \\
 &= C_1 h^2 \|\widehat X(nh)\| + C_2 C_bh,
\end{align*}
where \(C_1,C_2>0\) are the corresponding constants independent of
\(\widehat X(nh)\), \(C_b\), and \(h\).
We now proceed to analyze the recursion in the $S$-norm.  
Combining \eqref{eq: high_prob_1} with the bounds for terms $(\mathrm{I})$ and $(\mathrm{II})$ (Converting into $S$-norm), we obtain
\begin{align}
 \|\widehat X((n+1)h)\|_S^2 
 &\leq \left(1-\frac{C_{S}}{2}h\right)\|\widehat X(nh)\|_{S}^2\nonumber \\\
 &+\left(2+ \frac{2}{C_{{S}}h}\right)(2C_1^2h^4 \|S\|\|S^{-1}\|\|\widehat X(nh)\|_{S}^2 + 2 \|S\| C_2^2C_b^2h^2) \nonumber \\\
 &+ \left(4+ \frac{4}{C_{{S}}h}\right) \gamma C_b^2 \|S\| \nonumber \\\
 &\lesssim \left(1-\frac{C_{S}}{4}h\right) \|\widehat X(nh)\|_{S}^2  + C_b^2 h + \frac{1}{h}  C_b^2 \|S\| \nonumber
\end{align}
where the last inequality is obtained by choosing a sufficiently small stepsize \(h\) such that  

\[
h \le
\min\left\{
\frac{1}{C_S},\,
\frac{C_S}{\sqrt{32}\,C_1\sqrt{\|S\|\,\|S^{-1}\|}}
\right\}.
\]
Let \(h_1>0\) be chosen small enough so that all small-step conditions used in
the proof hold; in particular,
\begin{equation}
h_1
\le
\min\left\{
\frac{1}{C_S},\,
\frac{C_S}{\sqrt{32}\,C_1\sqrt{\|S\|\,\|S^{-1}\|}},\,
\frac{C_S}{4\|S\|\|S^{-1}\|\|J_b\|^2}
\right\}, \quad h_1' =  ch_1,
\label{eq: lem_18_h1}
\end{equation}
for some \(0<c<1\). Throughout this proposition, we consider constant step size \(h \in [h'_1, h_1]\). The  lower bound needed for the lemma (\(h'_1\)) will be fixed when \(h\) is
chosen in the proof of the main theorem. Starting from the initialization $\widehat X(0)=0$, the recursion yields for all $n$,
\begin{align*}
\|\widehat X((n+1)h)\|_S^2 \lesssim d \log \left(\frac{3N}{\delta}\right)
\end{align*}
where we used the definition of \(C_b\), together with the fact that \(h\) is bounded above and below by positive constants.
\end{proof}

Next we bound the Picard fixed point initialized at the current state $\widehat X(nh)$.

\begin{lemma}
\label{lem:high-prob-bound_fixed_point}
Assume the step size condition in Lemma~\ref{lem: First_Picard_increment_bound} and ~\ref{lem:discrete_contraction}. Then the following bound holds with probability at least \(1-\delta\):
\begin{equation}
\label{eq:high-prob-bound}
\sup_{t \in [0,1]}\|\widehat X^\ast_y(nh + th)\| 
\lesssim
\left[\, d \,\log\!\frac{3N}{\delta} \right]^{1/2},
\end{equation}
where $y=\widehat X(nh)$.
\end{lemma}

\begin{proof}
For notational convenience, we suppress the subscript $y$, though we emphasize that the Picard fixed point is initialized at $y=\widehat X(nh)$. Under the event $\bigcap_{n=0}^{N-1}\mathcal G_n(h,C_b)$, we have
\begin{align}
\sup_{t \in [0,1]}\|\widehat X^*(nh + th)\|^2 
&\leq  2\sup_{t \in [0,1]}\|\widehat X^*(nh + th) - \widehat X(nh)\|^2 + 2\|\widehat X(nh)\|^2 \nonumber\\
&\leq  \frac{2}{(1-\rho)^2}\sup_{t \in [0,1]}\|\widehat X^{[1]}(nh + th) - \widehat X(nh)\|^2 + 2\|\widehat X(nh)\|^2 \enspace \text{(by rem \eqref{rem:geometric-picard})}\nonumber\\
&\lesssim \left[2+h^2 \right] \|\widehat X(nh)\|^2
+ C_b^2 \qquad \text{(by Lemma~\ref{lem: First_Picard_increment_bound})},\nonumber
\end{align}

Converting $l_2$ norm into $ S$ norm and applying  Lemma~\ref{lem:discrete_contraction}, we have,
\begin{align}
\sup_{t \in [0,1]}\|\widehat X^*(nh + th)\|^2 &\leq (2+h^2)\|S^{-1}\|\|\widehat{X}(nh)\|_S^2 + C_b^2\nonumber\\
& \lesssim (2+h^2)\|S^{-1}\|d \log \left(\frac{3N}{\delta}\right) + C_b^2\nonumber\\
& \lesssim d \log \left(\frac{3N}{\delta}\right). \nonumber
\label{eq: lem_fixed_pt_bound_temp}
\end{align}
\end{proof}

Finally, we state the moment bound needed to control the interpolation error.
\begin{lemma}
\label{lem:expectaion_power2k}

Assume the step size condition in Lemma~\ref{lem: First_Picard_increment_bound} and ~\ref{lem:discrete_contraction}. Let
\[
Z \;:=\; \sup_{t\in[0,1]}\bigl\|\widehat X^\ast_y(nh+th)\bigr\|,
\]
for $y = \widehat X(nh).$ Then for every integer $k\ge1$,
\[
\mathbb{E}\bigl[Z^{2k}\bigr]
\;\lesssim \; Nd^k.
\]
\end{lemma}
\begin{proof}
For $k\ge1$,
\[
\mathbb{E}\bigl[Z^{2k}\bigr]
= \int_{0}^{\infty}\mathbb{P}\bigl(Z^{2k}>t\bigr)\,dt
= \int_{0}^{\infty}\mathbb{P}\bigl(Z>u\bigr)\,2k\,u^{2k-1}\,du .
\]
By the tail estimate of Lemma~\ref{lem:high-prob-bound_fixed_point}, we obtain
\[
\mathbb{E}\bigl[Z^{2k}\bigr]
\le 6kN \int_{0}^{\infty} u^{2k-1}
    \exp\Bigl(-\frac{u^{2}}{C d}\Bigr)\,du .
\]
for some positive constant $C > 0$ independent of $d, N, \varepsilon$. Let $a:=1/(C d)$. The standard integral
\(
\int_{0}^{\infty} u^{2k-1} e^{-a u^{2}}\,du
= \tfrac{1}{2} a^{-k}\Gamma(k)
\)
yields
\[
\mathbb{E}\bigl[Z^{2k}\bigr]
\lesssim Nd^k
\]
as claimed.
\end{proof}

\subsection{Derivatives of $\widehat  X^{\ast}_1$}
\label{Appendix-E3}

To control the higher-order derivatives of \(\widehat X_1^\ast\), we first record a standard bound on the derivatives of the Lagrange interpolant appearing in the discretized dynamics.  Let $f=f(t)$ be the function of interest, and let $P_t$ denote its Lagrange interpolating polynomial, which serves as an approximation of $f$.  
We consider $k{+}1$ interpolation nodes
\[
t_0,\, t_1,\,\ldots,\, t_k, \qquad t_j = t_0 + jh, \quad j=0,\ldots,k,
\]
with constant step size $h>0$.

In the Newton form \citep{stoer1980introduction}, the interpolating polynomial $P_t$ can be expressed as  
\begin{equation} \label{eq:Lagrange-in-Newton-form}
    P_t \;=\; a_0 \;+\; \sum_{i=1}^{K} a_i \prod_{j=0}^{i-1} (t - t_j),
\end{equation}
where the coefficients $a_i = [f(t_0), f(t_1), \ldots, f(t_i)]$ are the
finite divided differences of $f$. These coefficients satisfy the recursive identity
\begin{equation} \label{eq: Finite divided difference recursive relation}
    [f(t_0), f(t_1), \ldots, f(t_i)]
    \;=\; \frac{[f(t_1), f(t_2), \ldots, f(t_i)] \;-\; [f(t_0), f(t_1), \ldots, f(t_{i-1})]}{t_i - t_0}.
\end{equation}

Having expressed the interpolant in the Newton form, we now establish a few useful lemmas that will be instrumental in the proof.  
In particular, we next derive a more general recursive relation for finite divided differences.

\begin{lemma}[Order-reduction of divided differences] \label{lem: K finite divided in i finite divided}
    Let $t_r = t_0 + r h$ for $r=0,1,\ldots,k$ be equally spaced nodes with step size $h>0$ and let $D_{r}^{(k)} =  f[t_r, t_{r+1}, \ldots, t_{r+k}]$, be the $k$-th order divided difference starting at point $f(t_r)$. For $i \leq k$, let $g_{r} = D_{r}^{i} = f[x_r, \ldots, x_{r+i}]$, be $i$-th order finite divided difference starting at point $f(x_r).$ Then for $m = k-i$, 
    \begin{equation}
        D_{0}^{(k)} = \frac{i!}{k!h^m}\sum_{j=0}^{m}(-1)^{m+j} \binom{m}{j} g_{j}.
    \end{equation}
    This lemma expresses $k$-th order finite divided difference in terms of $i$-th order finite divided differences.
\end{lemma}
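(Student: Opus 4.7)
The plan is to prove the identity by induction on the gap $m = k - i$, exploiting the fact that for equispaced nodes the divided-difference recursion \eqref{eq: Finite divided difference recursive relation} becomes, at every order $j$,
\begin{equation*}
D_r^{(j)} \;=\; \frac{D_{r+1}^{(j-1)} - D_r^{(j-1)}}{j\,h},
\end{equation*}
since $t_{r+j} - t_r = jh$. This identifies the step from order $j-1$ to order $j$ with the action of the forward difference operator $(\Delta u)_r := u_{r+1} - u_r$ scaled by $1/(jh)$. Iterating this $m$ times, starting from order $i$ and climbing to order $k = i+m$, produces $\Delta^m$ acting on $g$, together with a prefactor
\begin{equation*}
\frac{1}{(i+m)(i+m-1)\cdots(i+1)\,h^{m}} \;=\; \frac{i!}{k!\,h^{m}},
\end{equation*}
and the classical identity $(\Delta^{m} u)_0 = \sum_{j=0}^{m} (-1)^{m-j}\binom{m}{j} u_j = \sum_{j=0}^{m} (-1)^{m+j}\binom{m}{j} u_j$ then yields the formula in the lemma.

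\textbf{Induction details.} The base case $m=0$ is tautological, since $D_0^{(i)} = g_0$. For the inductive step, I would assume the formula holds at gap $m$ for both $D_0^{(i+m)}$ and $D_1^{(i+m)}$ (the latter differing only by the shift $g_j \mapsto g_{j+1}$), substitute both expansions into the one-step recursion at order $i+m+1$, reindex the $D_1^{(i+m)}$ sum so that both expansions live on the common range $j = 0,\ldots,m+1$ (under the convention $\binom{m}{-1} = \binom{m}{m+1} = 0$), and combine coefficients via Pascal's identity
\begin{equation*}
\binom{m}{j-1} + \binom{m}{j} \;=\; \binom{m+1}{j}.
\end{equation*}
The sign bookkeeping reduces to $(-1)^{m+(j-1)} = -(-1)^{m+j} = (-1)^{(m+1)+j}$, and the prefactor updates multiplicatively as $\frac{1}{(i+m+1)h} \cdot \frac{i!}{(i+m)!\,h^{m}} = \frac{i!}{(i+m+1)!\,h^{m+1}}$, which matches the claimed prefactor at gap $m+1$.

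\textbf{Main obstacle.} I do not expect a substantive obstacle here; the argument is entirely finite-difference bookkeeping. The one spot where slips are easy is the index shift when substituting $D_1^{(i+m)}$, whose natural expansion carries $g_{j+1}$ rather than $g_j$, and aligning it with the expansion of $D_0^{(i+m)}$ before invoking Pascal's rule. Introducing the common index range $j = 0,\ldots,m+1$ together with the zero-boundary binomial convention collapses this step to a single line and completes the induction.
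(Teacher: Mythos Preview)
Your proposal is correct and follows essentially the same approach as the paper: both argue by induction on the gap $m$, invoke the one-step recursion $D_r^{(j)} = (D_{r+1}^{(j-1)} - D_r^{(j-1)})/(jh)$ for equispaced nodes, and close the inductive step via Pascal's identity together with the multiplicative update of the prefactor. The only cosmetic differences are that the paper starts its base case at $m=1$ rather than $m=0$, and inducts by fixing $k$ and pushing $i$ downward (expanding each $D_j^{(k-l)}$ one level lower) rather than fixing $i$ and pushing $k$ upward as you do; your framing in terms of the forward-difference operator $\Delta$ is a clean way to package the same bookkeeping.
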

\begin{proof}
    We will prove this by induction over $m$. For the base case $m = 1$, i.e. $i = k-1$, we have
    \begin{align}
       D_{0}^{(k)}=[f(t_0), f(t_1),\ldots, f(t_k)] = \frac{1}{kh} \left( -f[t_0, \ldots, t_{k-1}] + f[t_1, \ldots, t_k] \right).
    \end{align}
    This is true by the recursive relation \eqref{eq: Finite divided difference recursive relation}.
Assume the formula holds for \(m=l\), i.e. \(i=k-l\). Using the recursive identity
\[
D_j^{(k-l)}
=
\frac{D_{j+1}^{(k-l-1)}-D_j^{(k-l-1)}}{(k-l)h},
\]
we get
\[
\begin{aligned}
D_0^{(k)}
&=
\frac{(k-l)!}{k!h^l}
\sum_{j=0}^{l}(-1)^{l+j}\binom{l}{j}D_j^{(k-l)}  \\
&=
\frac{(k-l-1)!}{k!h^{l+1}}
\sum_{j=0}^{l}(-1)^{l+j}\binom{l}{j}
\left(D_{j+1}^{(k-l-1)}-D_j^{(k-l-1)}\right).
\end{aligned}
\]
Reindexing the first sum and using Pascal's identity,
\(\binom{l}{j-1}+\binom{l}{j}=\binom{l+1}{j}\), this becomes
\[
D_0^{(k)}
=
\frac{(k-l-1)!}{k!h^{l+1}}
\sum_{j=0}^{l+1}
(-1)^{l+1+j}\binom{l+1}{j}D_j^{(k-l-1)}.
\]
Thus the formula holds for \(m=l+1\), completing the induction.
\end{proof}

\begin{lemma}[Derivative bound for the Lagrange interpolant]
\label{lem:lagrange-derivative-bound}
Let \(P\) be the Lagrange interpolant of \(f\) on \(k+1\) equally spaced nodes
\(t_0,\ldots,t_k\) in an interval \(I=[t_0,t_k]\). Then, for every \(0\le n\le k\),
\[
\sup_{t\in I}\|P^{(n)}(t)\|
\le
C_{p,n}\sup_{t\in I}\|f^{(n)}(t)\|,
\]
where \(C_{p,n}\) depends only on \(k\) and \(n\).
\end{lemma}

\begin{proof}
Write the Newton form of the interpolating polynomial as
\[
P(t)=\sum_{\ell=0}^k a_\ell\prod_{j=0}^{\ell-1}(t-t_j),
\qquad
a_\ell=f[t_0,\ldots,t_\ell].
\]
Differentiating \(n\) times gives
\[
P^{(n)}(t)
=
\sum_{\ell=n}^k
a_\ell
\frac{d^n}{dt^n}\prod_{j=0}^{\ell-1}(t-t_j).
\]
For \(\ell=n+i\), the coefficient \(a_{n+i}\) is an \((n+i)\)-th order divided difference. By Lemma~\ref{lem: K finite divided in i finite divided}, it can be expressed in terms of \(n\)-th order divided differences as
\[
a_{n+i}
=
\frac{n!}{(n+i)!\,h^i}
\sum_{j=0}^{i}(-1)^{i+j}\binom{i}{j}D_j^{(n)} ,
\]
where \(D_j^{(n)}\) denotes an \(n\)-th order divided difference. Hence
\[
\|a_{n+i}\|
\le
\frac{n!}{(n+i)!\,h^i}
\sum_{j=0}^{i}\binom{i}{j}\|D_j^{(n)}\|.
\]
By the mean-value theorem for divided differences,
\[
\|D_j^{(n)}\|
\le
\frac{1}{n!}\sup_{t\in I}\|f^{(n)}(t)\|.
\]
Therefore,
\[
\|a_{n+i}\|
\le
\frac{2^i}{(n+i)!\,h^i}
\sup_{t\in I}\|f^{(n)}(t)\|.
\]

On the other hand, it is easy to verify that, for \(t\in I\),
\[
\left|
\frac{d^n}{dt^n}\prod_{j=0}^{n+i-1}(t-t_j)
\right|
\le
C_{n,i}h^i,
\]
where \(C_{n,i}\) depends only on \(n\) and \(i\). Combining the last two bounds,
\[
\|P^{(n)}(t)\|
\le
\sum_{i=0}^{k-n}
\frac{2^i C_{n,i}}{(n+i)!}
\sup_{t\in I}\|f^{(n)}(t)\|.
\]
Since the sum has finitely many terms depending only on \(k\) and \(n\), we obtain
\[
\sup_{t\in I}\|P^{(n)}(t)\|
\le
C_{p,n}\sup_{t\in I}\|f^{(n)}(t)\|.
\]
\end{proof}

We next establish a bound on the derivatives of the fixed-point trajectory 
$\widehat{X}^{\ast}_1$ within a single interpolation interval. We apply the above lemmas to the Lagrange interpolant \(P_2(t;\widehat X^\ast)\) in Proposition~\ref{prop:error-bound}.

\begin{lemma}\label{lem:X1-deriv-bound}
Let \(I\subset\mathbb R\) be an interval on which the Lagrange interpolant
\(P_t:=P_2(t;\widehat X^\ast)\) is constructed. Fix \(1\le k\le K-1\).
Then there exists a constant \(C_k, \widetilde C_k >0 \), depending only on \(k,\gamma\), and the interpolation nodes, such that,
\[
\left\|\frac{d^k}{dt^k}\widehat X_1^\ast(t)\right\|
\le
\widetilde C_k\left(
\|\widehat X^\ast(t)\|
+
\sum_{q=0}^{k-2}
\sup_{s\in I}
\left\|
\frac{d^q}{ds^q}\nabla U(\widehat X_1^\ast(s))
\right\|
\right).
\]

\end{lemma}

\begin{proof}
The fixed-point equation for \(\widehat X^\ast\), corresponding to the
higher-order dynamics \eqref{eq: high_order_langevin} with \(\nabla U\) replaced by the interpolant \(P_t\), yields a linear chain of equations (See \eqref{eq: operator_hat_T2}). Differentiating the first
coordinate repeatedly, the first few derivatives have the form
\[
\left\|\frac{d}{dt}\widehat X_1^\ast(t)\right\|
=
\|\widehat X_2^\ast(t)\|,
\]
\[
\left\|\frac{d^2}{dt^2}\widehat X_1^\ast(t)\right\|
\le
C_\gamma\left(\|P_t\|+\|\widehat X_3^\ast(t)\|\right),
\]
and
\[
\left\|\frac{d^3}{dt^3}\widehat X_1^\ast(t)\right\|
\le
C_\gamma\left(
\left\|\frac{d}{dt}P_t\right\|
+\|\widehat X_2^\ast(t)\|
+\|\widehat X_4^\ast(t)\|
\right).
\]
Continuing in the same way, for each fixed \(k\le K-1\),
\[
\frac{d^k}{dt^k}\widehat X_1^\ast(t)
=
\sum_{j=1}^{k+1} c_{k,j}\widehat X_j^\ast(t)
+
\sum_{i=0}^{k-2} d_{k,i}\frac{d^i}{dt^i}P_t,
\]
where the coefficients \(c_{k,j},d_{k,i}\) depend only on \(k\) and \(\gamma\).
Taking norms and absorbing the finitely many coefficients into a constant
\(C_k\), we obtain
\[
\left\|\frac{d^k}{dt^k}\widehat X_1^\ast(t)\right\|
\le
C_k\left(
\|\widehat X^\ast(t)\|
+
\sum_{i=0}^{k-2}
\left\|\frac{d^i}{dt^i}P_t\right\|
\right).
\]
Finally, applying Lemma~\ref{lem:lagrange-derivative-bound} with
\(f(t)=\nabla U(\widehat X_1^\ast(t))\) gives
\[
\left\|\frac{d^k}{dt^k}\widehat X_1^\ast(t)\right\|
\le
C_k\|\widehat X^\ast(t)\|
+
C_k
\sum_{i=0}^{k-2} C_{p,i}
\sup_{s\in I}
\left\|
\frac{d^i}{ds^i}\nabla U(\widehat X_1^\ast(s))
\right\|.
\]
We may absorb the constants \(C_k\), \(C_{p,i}\), and the finite multiplicities into a new constant \(\widetilde C_k\), obtaining
\[
\left\|\frac{d^k}{dt^k}\widehat X_1^\ast(t)\right\|
\le
\widetilde C_k
\left(
\|\widehat X^\ast(t)\|
+
\sum_{q=0}^{k-2}
\sup_{s\in I}
\left\|
\frac{d^q}{ds^q}\nabla U(\widehat X_1^\ast(s))
\right\|
\right).
\]
This proves the claim.
\end{proof}
\begin{remark}
The derivative bounds above are stated only for \(k\le K-1\). This is because the Brownian part enters through the last coordinate of the chain, so differentiating the first coordinate more than \(K-1\) times would require differentiating the Brownian term. Hence the interpolation analysis only uses derivatives up to order \(K-1\).
\end{remark}

\section{Ridge-separable case}
\label{Appendix-H}
In this section, we revisit the interpolation error under the ridge-separable structure in Assumption~\ref{assump:ridge-separable}. We focus only on the interpolation component of the analysis, since this is the step where the crude worst-case bounds based on full higher-order smoothness of \(U\) introduce unfavorable dimension dependence. The continuous-time contraction and Picard iteration arguments remain unchanged, so the improvement comes entirely from sharpening the bound on the interpolation residual.

To reiterate, in the ridge-separable setting we consider potentials of the form
\[
U(x)
=
U_{\mathrm{ridge}}(x)
+
\frac{\eta}{2}\|x\|^2,
\qquad
U_{\mathrm{ridge}}(x)
=
\sum_{j=1}^r \phi_j(\theta_j^\top x),
\]
where \(\theta_j\in S^{d-1}\). We write
\[
\Theta
:=
\begin{bmatrix}
\theta_1^\top\\
\vdots\\
\theta_r^\top
\end{bmatrix}
\in\mathbb R^{r\times d},
\]
and assume that
\[
\lambda_{\max}(\Theta\Theta^\top)\le \Lambda_\Theta .
\]
In Assumption~\ref{assump:ridge-separable}, we also define
\[
L_{\max}
:= \max\left\{1, \max_{1 \leq j \leq K-1} L_j\right\}.
\]
The following lemma records the projected tensor bound used in the
ridge-separable interpolation analysis.

\begin{lemma}[Projected Tensor norm]
\label{lem:ridge-projected-l2-tensor-bound}
For an \(j\)-linear tensor \(T:(\mathbb R^d)^j\to \mathbb R^d\), define the projected-\(\ell_2\) tensor norm by
\[
\|T\|_{2,\Theta}
:=
\sup_{\|\Theta v_1\|_2\le 1,\ldots,\|\Theta v_j\|_2\le 1}
\|T[v_1,\ldots,v_j]\|_2 .
\]
Then, for every \(2\le m\le K\) and every \(x\in\mathbb R^d\),
\[
\left\|\nabla^{(m)}U_{\mathrm{ridge}}(x)\right\|_{2,\Theta}
\le
\sqrt{\Lambda_\Theta}\,L_{\max}.
\]
Equivalently, for every \(\Delta_1,\ldots,\Delta_{m-1}\in\mathbb R^d\),
\[
\left\|
\nabla^{(m)}U_{\mathrm{ridge}}(x)
[\Delta_1,\ldots,\Delta_{m-1}]
\right\|_2
\le
\sqrt{\Lambda_\Theta}\,L_{\max}
\prod_{\ell=1}^{m-1}\|\Theta\Delta_\ell\|_2 .
\]
\end{lemma}

\begin{proof}
Repeated differentiation gives
\[
\nabla^{(m)}U_{\mathrm{ridge}}(x)
=
\sum_{j=1}^r
\phi_j^{(m)}(\theta_j^\top x)\theta_j^{\otimes m}.
\]
Hence, for any \(\Delta_1,\ldots,\Delta_{m-1}\in\mathbb R^d\),
\[
\nabla^{(m)}U_{\mathrm{ridge}}(x)
[\Delta_1,\ldots,\Delta_{m-1}]
=
\Theta^\top a,
\]
where
\[
a_j
=
\phi_j^{(m)}(\theta_j^\top x)
\prod_{\ell=1}^{m-1}(\theta_j^\top\Delta_\ell).
\]
Using \(\lambda_{\max}(\Theta\Theta^\top)\le \Lambda_\Theta\), we have
\[
\|\Theta^\top a\|_2
\le
\sqrt{\Lambda_\Theta}\,\|a\|_2 .
\]
Moreover,
\[
\|a\|_2
\le
L_{\max}
\left(
\sum_{j=1}^r
\prod_{\ell=1}^{m-1}
|\theta_j^\top\Delta_\ell|^2
\right)^{1/2}
\le
L_{\max}
\prod_{\ell=1}^{m-1}
\left(
\sum_{j=1}^r|\theta_j^\top\Delta_\ell|^2
\right)^{1/2}.
\]
Since
\[
\left(
\sum_{j=1}^r|\theta_j^\top\Delta_\ell|^2
\right)^{1/2}
=
\|\Theta\Delta_\ell\|_2,
\]
we obtain
\[
\left\|
\nabla^{(m)}U_{\mathrm{ridge}}(x)
[\Delta_1,\ldots,\Delta_{m-1}]
\right\|_2
\le
\sqrt{\Lambda_\Theta}\,L_{\max}
\prod_{\ell=1}^{m-1}\|\Theta\Delta_\ell\|_2.
\]
Taking the supremum over \(\|\Theta\Delta_\ell\|_2\le1\) proves the claim.
\end{proof}

\begin{lemma}[Ridge-separable analogue of Lemma~\ref{lem:X1-deriv-bound}]
\label{lem:Theta-X1-deriv-bound}
Let \(I\subset\mathbb R\) be an interpolation interval. Then, for every \(2\le k\le K-1\), there exist constants
\(\tilde C_k,\tilde C_{k,q}>0\), depending only on \(k,\gamma,\eta\), such that
\[
\sup_{t\in I}
\left\|
\frac{d^k}{dt^k}\Theta \widehat X_1^\ast(t)
\right\|_2
\le
\tilde C_k\sup_{t\in I}\|\Theta \widehat X^\ast(t)\|_2
+
\sqrt{\Lambda_\Theta} \left(\sum_{q=0}^{k-2} \tilde C_{k,q}
\sup_{t\in I}
\left\|
\frac{d^q}{dt^q}\nabla U_{\mathrm{ridge}}(\widehat X_1^\ast(t))
\right\|_2 \right).
\]
\end{lemma}

\begin{proof}
The proof is the projected analogue of Lemma~\ref{lem:X1-deriv-bound}. Applying the fixed linear map \(\Theta\) to the chain equations for the fixed-point trajectory and differentiating repeatedly gives, for every \(2\le k\le K-1\),
\begin{equation}
\left\|
\frac{d^k}{dt^k}\Theta \widehat X_1^\ast(t)
\right\|_2
\le
C_k\left(
\|\Theta \widehat X^\ast(t)\|_2
+
\sum_{i=0}^{k-2}
\left\|
\Theta \frac{d^i}{dt^i}P_t
\right\|_2
\right),
\label{eq:theta^TX_bound}
\end{equation}

where \(P_t\) is the Lagrange interpolant of
\(t\mapsto \nabla U(\widehat X_1^\ast(t))\).

Since
\[
\nabla U(x)=\nabla U_{\mathrm{ridge}}(x)+\eta x,
\]
linearity of the interpolation operator gives
\[
P_t=P_t^{\mathrm{ridge}}+\eta P_t^X,
\]
where \(P_t^{\mathrm{ridge}}\) interpolates \(\nabla U_{\mathrm{ridge}}\), and \(P_t^X\)
interpolates \(\widehat X_1^\ast\). For the ridge part, the Lagrange derivative
bound (Lemma~\ref{lem:lagrange-derivative-bound}) and \(\|\Theta v\|_2\le \sqrt{\Lambda_\Theta}\|v\|_2\) imply
\[
\sup_{t\in I}
\left\|
\Theta \frac{d^i}{dt^i}P_t^{\mathrm{ridge}}
\right\|_2
\le
C_{p,i} \sqrt{\Lambda_\Theta}
\sum_{q=i}^{k-2}
\sup_{t\in I}
\left\|
\frac{d^q}{dt^q}g_{\mathrm{ridge}}(t)
\right\|_2.
\]

For the linear part, the same Lagrange derivative bound gives lower-order terms
in
\[
\sup_{t\in I}
\left\|
\frac{d^q}{dt^q}\Theta \widehat X_1^\ast(t)
\right\|_2,
\qquad q\le k-2.
\]
These are absorbed inductively using the already established bounds for lower
derivatives in \eqref{eq:theta^TX_bound}. Renaming constants yields the claim.
\end{proof}

\begin{lemma}[Ridge-separable analogue of Lemma~\ref{lem: inductive bound of time derivative of nabla U}]
\label{lem:ridge-derivative-bound}
Let
\[
g(t):=\nabla U(X_1^\ast(t)),
\qquad
g_{\mathrm{ridge}}(t):=\nabla U_{\mathrm{ridge}}(X_1^\ast(t)),
\]
and define
\[
M_I:=\sup_{t\in I}\|\Theta X_1^\ast(t)\|_2,
\qquad
N_I:=\sup_{t\in I}\|X^\ast(t)\|.
\]
Write
\[
U(x)=U_{\mathrm{ridge}}(x)+\frac{\eta}{2}\|x\|^2.
\]
Under Assumption~\ref{assump:ridge-separable}, for every \(n\ge1\), there exist constants
\(C_n,C_n'>0\), independent of \(d,h,N\), such that
\[
\sup_{t\in I}
\left\|
\frac{d^n}{dt^n}g(t)
\right\|
\le
C_n L_{\max}\sqrt{\Lambda_\Theta}\,M_I
\bigl(L_{\max}\Lambda_\Theta+M_I\bigr)^{n-1}
+
C_n' N_I .
\]
In particular, for \(n=K-1\),
\[
\sup_{t\in I}
\left\|
\frac{d^{K-1}}{dt^{K-1}}\nabla U(X_1^\ast(t))
\right\|
\le
C_{K-1} L_{\max}\sqrt{\Lambda_\Theta}\,M_I
\bigl(L_{\max}\Lambda_\Theta+M_I\bigr)^{K-2}
+
C_{K-1}'N_I .
\]
\end{lemma}

\begin{proof}

The proof has two parts. First, we control the ridge component
\(g_{\mathrm{ridge}}(t)\). Second, we add the contribution of the quadratic
term.

Set
\[
a:=L_{\max}\sqrt{\Lambda_\Theta},
\qquad
H:=a \sqrt{\Lambda_\Theta}+M_I .
\]
By Lemma~\ref{lem:ridge-projected-l2-tensor-bound}, every multilinear derivative of
\(U_{\mathrm{ridge}}\) satisfies the projected-\(\ell_2\) bound
\[
\left\|
\nabla^{(m)}U_{\mathrm{ridge}}(x)
[\Delta_1,\ldots,\Delta_{m-1}]
\right\|
\le
a\prod_{\ell=1}^{m-1}\|\Theta\Delta_\ell\|_2 .
\]
Combining this estimate with Faà di Bruno's formula gives the same induction as
in Lemma~\ref{lem: inductive bound of time derivative of nabla U}, except that
each derivative of \(X_1^\ast\) is measured after applying \(\Theta\). Using
Lemma~\ref{lem:Theta-X1-deriv-bound} to control these projected derivatives, we
obtain, for every \(n\ge1\),
\[
\sup_{t\in I}
\left\|
\frac{d^n}{dt^n}g_{\mathrm{ridge}}(t)
\right\|
\le
C_n a M_I H^{n-1}.
\]
Here the constant \(C_n\) are independent of \(d, r, \nu, h,N, M\).

Now use
\[
g(t)=g_{\mathrm{ridge}}(t)+\eta X_1^\ast(t).
\]
Therefore,
\[
\sup_{t\in I}
\left\|
\frac{d^n}{dt^n}g(t)
\right\|
\le
\sup_{t\in I}
\left\|
\frac{d^n}{dt^n}g_{\mathrm{ridge}}(t)
\right\|
+
\eta
\sup_{t\in I}
\left\|
\frac{d^n}{dt^n}X_1^\ast(t)
\right\|.
\]
The Euclidean derivative bound from Lemma~\ref{lem:X1-deriv-bound} gives
\[
\sup_{t\in I}
\left\|
\frac{d^n}{dt^n}X_1^\ast(t)
\right\|
\le
C\left(
N_I+
\sum_{i=0}^{n-2}
\sup_{s\in I}
\left\|
\frac{d^i}{ds^i}g(s)
\right\|
\right).
\]
Thus
\[
\sup_{t\in I}
\left\|
\frac{d^n}{dt^n}g(t)
\right\|
\le
C_n aM_IH^{n-1}
+
C\eta N_I
+
C\eta
\sum_{i=0}^{n-2}
\sup_{s\in I}
\left\|
\frac{d^i}{ds^i}g(s)
\right\|.
\]
This recursion closes by induction on \(n\), since all terms in the sum are of
strictly lower order. Hence, after renaming constants,
\[
\sup_{t\in I}
\left\|
\frac{d^n}{dt^n}g(t)
\right\|
\le
C_n aM_IH^{n-1}+C_n'N_I.
\]
Substituting \(a=L_{\max}\sqrt{\Lambda_\Theta}\) and \(H=a\sqrt{\Lambda_\Theta}+M_I\) proves the claim.
\end{proof}

\begin{lemma}[Ridge separable analogue of Lemma~\ref{lem_appendix:lagrange-IR-uniform}]
\label{lem_appendix:lagrange-IR-uniform-ridge}
Let \(h=\Theta(1)\) be the step size  such that the moment estimate in
Lemma~\ref{lem:expectaion_power2k_ridge} applies. On each step \([nh,(n+1)h]\), partition the interval into
\(M\) blocks and let \(P_2(t;\widehat X^\ast)\) be the blockwise
Lagrange interpolant of
\[
t\mapsto \nabla U(\widehat X_1^\ast(t))
\]
using \(K-1\) equispaced nodes on each block. Define
\[
I_U
:=
\sup_{t\in[nh,(n+1)h]}
\left\|
\nabla U(\widehat X_1^\ast(t))-P_2(t;\widehat X^\ast)
\right\|^2 .
\]
Under Assumptions~\ref{assump:U-strong-smooth}-~\ref{assump:centered},and \ref{assump:ridge-separable}, there exists a constant
\(C_{\rm IR}>0\), independent of \(h, r,d,N,M,\nu,\varepsilon\), such that,
\[
\mathbb E I_U
\le
C_{\rm IR}
\frac{N h^{2K-2}}{M^{2K-2}(K-1)!^2}
\left[
\sum_{j=2}^{2K-2}
\binom{2K-4}{j-2}
L_{\max}^{2K-j}
\Lambda_\Theta^{2K-j-1}
r^{j/2}
+d
\right].
\]
\end{lemma}

\begin{proof}
The standard Lagrange interpolation error estimate, applied on each block, gives
\[
\sqrt{I_U}
\le
\frac{(h/M)^{K-1}}{(K-1)!}
\sup_{t\in[nh,(n+1)h]}
\left\|
\frac{d^{K-1}}{dt^{K-1}}
\nabla U(\widehat X_1^\ast(t))
\right\|.
\]
Let
\[
M_I:=\sup_{t\in[nh,(n+1)h]}\|\Theta \widehat X_1^\ast(t)\|_2,
\qquad
N_I:=\sup_{t\in[nh,(n+1)h]}\|\widehat X^\ast(t)\|.
\]
By Lemma~\ref{lem:ridge-derivative-bound},
\[
\sup_{t\in[nh,(n+1)h]}
\left\|
\frac{d^{K-1}}{dt^{K-1}}
\nabla U(\widehat X_1^\ast(t))
\right\|
\le
C\,
L_{\max}\sqrt{\Lambda_\Theta}\,M_I
\bigl(L_{\max}\Lambda_\Theta+M_I\bigr)^{K-2}
+
C'N_I .
\]
Substituting this into the interpolation estimate and using
\((a+b)^2\le 2a^2+2b^2\), we obtain
\[
I_U
\le
C
\frac{h^{2K-2}}{M^{2K-2}(K-1)!^2}
\left[
L_{\max}^2\Lambda_\Theta
M_I^2
\bigl(L_{\max}\Lambda_\Theta+M_I\bigr)^{2K-4}
+
N_I^2
\right].
\]
Expanding the first term by the binomial theorem yields
\[
I_U
\le
C
\frac{h^{2K-2}}{M^{2K-2}(K-1)!^2}
\left[
\sum_{j=2}^{2K-2}
\binom{2K-4}{j-2}
L_{\max}^{2K-j}
\Lambda_\Theta^{2K-j-1}
M_I^j
+
N_I^2
\right].
\]
Finally, using the moment bounds in Lemma~\ref{lem:expectaion_power2k} and Lemma~\ref{lem:expectaion_power2k_ridge}, we get
\[
\mathbb E[M_I^j]\lesssim N r^{j/2},
\qquad
\mathbb E[N_I^2]\lesssim Nd,
\qquad 2\le j\le 2K-2.
\]
Factoring out the common \(N\), we get
\[
\mathbb E I_U
\le
C_{\rm IR}
\frac{N h^{2K-2}}{M^{2K-2}(K-1)!^2}
\left[
\sum_{j=2}^{2K-2}
\binom{2K-4}{j-2}
L_{\max}^{2K-j}
\Lambda_\Theta^{2K-j-1}
r^{j/2}
+d
\right].
\]
This proves the claim.
\end{proof}

\subsection{Ridge-separable Picard contraction}
 Before stating the projected contraction result, we record a simple
commutation property. Let
\[
\bar\Theta:=I_K\otimes \Theta\in\mathbb R^{Kr\times Kd}.
\]
Since the matrices \(A\), \(D\), and \(Q\) have Kronecker-product form with
respect to the \(K\)-block structure, we can write
\[
A=A_K\otimes I_d,\qquad
D+Q=(D_K+Q_K)\otimes I_d,
\]
for suitable \(K\times K\) matrices \(A_K,D_K,Q_K\). Define their projected
counterparts by
\[
A_r:=A_K\otimes I_r,
\qquad
D_r+Q_r:=(D_K+Q_K)\otimes I_r .
\]
Then, using the mixed-product rule for Kronecker products,
\[
\bar\Theta A
=
(I_K\otimes \Theta)(A_K\otimes I_d)
=
A_K\otimes \Theta
=
(A_K\otimes I_r)(I_K\otimes \Theta)
=
A_r\bar\Theta,
\]
and similarly,
\[
\bar\Theta(D+Q)
=
(D_r+Q_r)\bar\Theta .
\]

\begin{lemma}[Ridge-separable analogue of Lemma~\ref{lem: T_phi_contraction_sup_norm}]
For a fixed initialization
\(y\in\mathbb R^{Kd}\), define the projected discretized map
\[
\widehat{\mathcal T}_y^\Theta[X]
:=
\bar\Theta\,\widehat{\mathcal T}_y[X].
\]
Assume that
\[
h\le
\min\left\{
\frac{\log 2}{3\gamma},
\frac{1}{20\Gamma_\phi(\Lambda_\Theta L_{max}+\eta)}
\right\}.
\]
Then, for all paths \(X,Y\in\mathcal C([0,h],\mathbb R^{Kd})\),
\[
\big\|
\widehat{\mathcal T}_y^\Theta[X]
-
\widehat{\mathcal T}_y^\Theta[Y]
\big\|_\infty
\le
\frac{1}{10}
\big\|
\bar\Theta X-\bar\Theta Y
\big\|_\infty .
\]
\end{lemma}
\begin{proof}
Let \(\tilde X=\widehat{\mathcal T}_y[X]\) and \(\tilde Y=\widehat{\mathcal T}_y[Y]\). Then 
\begin{align}
\widehat{\mathcal{T}}^{\Theta}_y[X]
= 
\bar \Theta\tilde{X}(t)
&=
\bar \Theta y + \int_{0}^{t} A_r \bar \Theta\,\tilde{X}(s)\,ds \nonumber\\
&\;-\; \int_{0}^{t} (e_2\otimes I_r) \bar \Theta\,P(s;X)\,ds \;+\; \int_{0}^{t} \bar \Theta\sqrt{2D}\,dB_s. \label{eq21}
\end{align}
Since \(\|e_2\otimes I_r\|=1\),
\begin{align*}
\|\bar \Theta\tilde X(t)-\bar \Theta\tilde Y(t)\|
&\le \int_{0}^{t} \|A_r\|\,\|\bar \Theta\tilde X(s)-\bar \Theta\tilde Y(s)\|\,ds
 \;+\; \int_{0}^{t} \|\Theta P(s;X)-\Theta P(s;Y)\|\,ds.
\end{align*}
We now bound the interpolation term using the block structure. Partition \([0,h]\) into \(M\) blocks (namely $B_i$)  of size \(h/M\), and let \(P_{B_i}\) denote the Lagrange interpolant on block \(B_i\). Then
\[
\int_0^t \|\Theta P(s;X)- \Theta P(s;Y)\|\,ds
\;\le\; \sum_{i=1}^{M} \int_{B_i \cap [0,t]} \|\Theta P_{B_i}(s;X)-\Theta P_{B_i}(s;Y)\|\,ds.
\]

For each \(i\) we have,
\begin{align*}
\|\Theta P_{B_i}(s;X)-\Theta P_{B_i}(s;Y)\|
&\le  \sum_{j=1}^{K-1} |\ell_{i,j}(s)|\,\|\Theta \Theta ^\top \phi'(\Theta X(t_{i,j})) \\
& \hspace{2cm}- \Theta \Theta ^\top \phi'(\Theta Y(t_{i,j})) + \eta\Theta X(t_{i,j}) - \eta\Theta Y(t_{i,j})\|_\infty \\
&= \sum_{j=1}^{K-1} |\tilde\ell_{j}(s)|\,\left(\Lambda_{\Theta} L_{max}\|\Theta X - \Theta Y\|_\infty + \eta\|\Theta X - \Theta Y\|_\infty \right) \\
&\le \,\Gamma_\phi (\Lambda_{\Theta} L_{max} + \eta)\,\|\Theta X- \Theta Y\|_\infty.
\end{align*}
Here, in the second line we used that \(\|\Theta\Theta^\top\|_{op}\le \Lambda_\Theta\) and that \(\phi'\) is \(L_{max}\)-Lipschitz, while in the last line we used the definition of the Lebesgue constant \(\Gamma_\phi\).

Summing over at most \(M\) blocks yields
\[
\int_0^t \|\Theta P(s;X)- \Theta P(s;Y)\|\,ds
\;\le\; h\,\Gamma_\phi (\Lambda_{\Theta} L_{max} + \eta)\,\|\Theta X- \Theta Y\|_\infty.
\]

Therefore,
\begin{align*}
\|\bar \Theta\tilde X(t)-\bar \Theta\tilde Y(t)\|
&\le \int_{0}^{t} \|A_r\|\,\|\bar \Theta\tilde X(s)-\bar \Theta\tilde Y(s)\|\,ds
 \;+\; h\,\Gamma_\phi (\Lambda_{\Theta} L_{max} + \eta)\,\|\Theta X- \Theta Y\|_\infty.
\end{align*}

Applying Grönwall’s inequality, for \(t\le h\),
\begin{align*}
  \|\Theta \tilde X(t)- \Theta \tilde Y(t)\|
\;&\le\   e^{\|A_r\| t}\, h\,\Gamma_\phi (\Lambda_{\Theta} L_{max} + \eta)\,\|\Theta X- \Theta Y\|_\infty \\
& \;\le\; e^{3\gamma h}\, h\,\Gamma_\phi (\Lambda_{\Theta} L_{max} + \eta)\,\|\Theta X- \Theta Y\|_\infty.
\end{align*}

Choose \(h\le \frac{\log 2}{3\gamma}\) so that \(e^{3\gamma h}\le 2\), and \(h\le (20 \Gamma_\phi (\Lambda_{\Theta} L_{max} + \eta))\) so that the product is at most \(1/10\). Taking the supremum over \(t\in[0,h]\) yields the result.
\end{proof}

\begin{remark}[Geometric convergence in the projected norm]
\label{rem:projected-geometric-picard}
Let
\[
\widehat Z^{[\nu]} := \bar\Theta\,\widehat X^{[\nu]}.
\]
Then the projected Picard iterates converge geometrically in the uniform norm. In particular, for all \(\nu\ge 1\),
\[
\|\widehat Z^{[\nu]}-\widehat Z^{[\nu-1]}\|_\infty
\le \rho_\Theta^{\nu-1}\,\|\widehat Z^{[1]}-\widehat Z^{[0]}\|_\infty,
\qquad \rho_\Theta:= 2\, h\,\Gamma_\phi (\Lambda_{\Theta} L_{max} + \eta)\leq \frac{1}{10}.
\]
Summing the telescoping series, we obtain
\[
\|\widehat Z^{[\nu]}-\widehat Z^{[0]}\|_\infty
\le \sum_{k=0}^{\nu-1}\|\widehat Z^{[k+1]}-\widehat Z^{[k]}\|_\infty
\le \sum_{k=0}^{\nu-1}\rho_\Theta^k\,\|\widehat Z^{[1]}-\widehat Z^{[0]}\|_\infty
=
\frac{1-\rho_\Theta^\nu}{1-\rho_\Theta}\,\|\widehat Z^{[1]}-\widehat Z^{[0]}\|_\infty.
\]
In particular, letting \(\nu\to\infty\) yields
\[
\|\widehat Z_y^\ast-\widehat Z^{[0]}\|_\infty
\le \frac{1}{1-\rho_\Theta}\,\|\widehat Z^{[1]}-\widehat Z^{[0]}\|_\infty,
\qquad
\widehat Z_y^\ast:=\bar\Theta\,\widehat X_y^\ast.
\]
\end{remark}

\subsection{Ridge-separable Lyapunov matrix}
Before proceeding, we introduce a projected norm that allows us to establish contraction directly in the reduced ridge space. For this subsection, consider an alternative quadratic potential
\[
\widehat U(x)=\frac{\eta}{2}\|x\|^2
\]
for \(x\in\mathbb R^r\).

We associate this potential with the following reduced \(K\)-th order Langevin
dynamics on \(\mathbb R^{Kr}\):
\[
Z(t)=(X(t),Y(t)),
\qquad
X(t)\in\mathbb R^r,\quad
Y(t)\in\mathbb R^{(K-1)r},
\]
where
\begin{align*}
dX(t) &= \bar P\,Y(t)\,dt,\\
dY(t) &= -\bar P^\top \nabla \widehat U(X(t))\,dt
        - \gamma\,\bar Q\,Y(t)\,dt
        + \sqrt{2\gamma}\,\bar D\,dB_t .
\end{align*}
Here
\[
\bar P=(I_r,0,\ldots,0)\in\mathbb R^{r\times (K-1)r},
\]
and \(\bar Q,\bar D\in\mathbb R^{(K-1)r\times (K-1)r}\) are the \(r\)-dimensional
block analogues of the matrices appearing in the original higher-order
dynamics in Appendix~\ref{Appendix-C1}:
\[
\bar Q=
\begin{pmatrix}
0 & -I_r & 0 & \cdots & 0 \\
I_r & 0 & -I_r & \ddots & \vdots \\
0 & I_r & 0 & \ddots & 0 \\
\vdots & \ddots & \ddots & \ddots & -I_r \\
0 & \cdots & 0 & I_r & I_r
\end{pmatrix},
\qquad
\bar D=
\begin{pmatrix}
0 & & & 0 \\
& \ddots & & \\
& & 0 & \\
0 & & & I_r
\end{pmatrix}.
\]

Let \(\widehat b\) denote the drift of the reduced dynamics. Writing
\(z=(x_1,y_1,\ldots,y_{K-1})\in\mathbb R^{Kr}\), we have
\[
\widehat b(z)
=
-(D_r+Q_r)
\begin{pmatrix}
\eta x_1\\
y_1\\
\vdots\\
y_{K-1}
\end{pmatrix}.
\]
Hence its Jacobian is
\[
\widehat J_b(z)
=
-(D_r+Q_r)
\begin{pmatrix}
\eta I_r & & & \\
& I_r & & \\
& & \ddots & \\
& & & I_r
\end{pmatrix}.
\]
Since
\[
\nabla^2\widehat U(x)=\eta I_r,
\]
and \(D_r,Q_r\) are the reduced-dimensional analogues of \(D,Q\), the same
Lyapunov-contraction argument in Appendix~\ref{Appendix-C1} applies. We thus get the following analogue of Theorem~\ref{thm:contr_S_C_s}: There exists
\(\widetilde S\in\mathbb R^{Kr\times Kr}\), with \(\widetilde S\succ0\), such that
\begin{equation}
\widetilde S\,\widehat J_b+\widehat J_b^\top \widetilde S
\preceq
-2C_{\widetilde S}\,\widetilde S ,
\label{eq:S_tilde_contraction}
\end{equation}
for some \(C_{\widetilde S} >0\).

\subsection{Ridge-separable analogue of Appendix~\ref{subsection:part2}}
We recall the ridge-separable setting. Let
\[
U(x)
=
\sum_{k=1}^r \phi_k(\theta_k^\top x)
+
\frac{\eta}{2}\|x\|_2^2,
\qquad x\in\mathbb R^d.
\]
With the notation
\[
\phi'(z)
:=
\bigl(\phi_1'(z_1),\ldots,\phi_r'(z_r)\bigr)^\top,
\qquad z\in\mathbb R^r,
\]
the gradient satisfies
\[
\nabla U(x)
=
\Theta^\top \phi'(\Theta x)+\eta x.
\]
Consequently, after projecting by \(\Theta\),
\[
\Theta\nabla U(x)
=
\Theta\Theta^\top \phi'(\Theta x)+\eta\,\Theta x .
\]

\begin{lemma}[Ridge-separable analogue of Lemma~\ref{lem:nice_events}]
Let \((B_t)_{t\ge 0}\) be a \(d\)-dimensional standard Brownian motion, \(h>0\), and \(N\ge 1\).  For \(C_\Theta\ge 0\), define
\begin{equation}
 \mathcal G_n(h,C_\Theta)
:=
\left\{
\sup_{0\le t\le h}
\left\|
\int_{nh}^{nh+t} \Theta\,dB_s
\right\|_2
\le C_\Theta
\right\},
\qquad n=0,1,\dots,N-1.
\label{eq:nice_events_ridge}
\end{equation}

Then
\[
\mathbb P\!\left(\bigcap_{n=0}^{N-1}\mathcal G_n(h,C_\Theta)\right)
\ge
1-3N\exp\!\left(-\frac{C_\Theta^2}{6 \, r\,\Lambda_\Theta\,h}\right).
\]
In particular, for any \(\delta\in(0,1)\), choosing
\[
C_\Theta=\sqrt{6 \, r\,\Lambda_\Theta\,h\log\frac{3N}{\delta}}
\]
ensures
\[
\mathbb P\!\left(\bigcap_{n=0}^{N-1}\mathcal G_n(h,C_\Theta)\right)\ge 1-\delta.
\]

\end{lemma}

\begin{proof}

\(\Theta (B_{nh+t}-B_{nh})\) is an \(r\)-dimensional Brownian motion with covariance matrix \(t\,\Theta \Theta^\top\). Thus it has the same law as
\[
(\Theta \Theta ^\top)^{1/2}W_t,
\]
where \(W_t\) is an \(r\)-dimensional standard Brownian motion. Since
\[
\|(\Theta  \Theta^\top)^{1/2}x\|_2\le  \sqrt{\Lambda_\Theta}\,\|x\|_2,
\]
we obtain
\[
\sup_{0\le t\le h}
\left\|\Theta\bigl(B_{nh+t}^{(K)}-B_{nh}^{(K)}\bigr)\right\|_2
\le
\sqrt{\Lambda_\Theta}\,
\sup_{0\le t\le h}\|W_t\|_2.
\]

Therefore
\[
\mathbb P\!\left(
\sup_{0\le t\le h}
\left\|
\int_{nh}^{nh+t}\Theta \,dB_s
\right\|_2
\ge C_\Theta
\right)
\le
\mathbb P\!\left(
\sup_{0\le t\le h}\|W_t\|_2
\ge
\frac{C_\Theta}{\sqrt{\Lambda_\Theta}}
\right).
\]
Thus we have
\[
\mathbb P\!\left(
\sup_{0\le t\le h}
\left\|
\int_{nh}^{nh+t}\Theta \,dB_s
\right\|_2
\ge C_\Theta
\right)
\le
3\exp\!\left(-\frac{C_\Theta^2}{6 \, r\,\Lambda_\Theta\,h}\right).
\]
A union bound over \(n=0,\dots,N-1\) yields the claim.
\end{proof}

\begin{lemma}[Ridge-separable analogue of Lemma~\ref{lem: First_Picard_increment_bound}]
Let \(\bar \Theta =  I_{K} \otimes  \Theta\), \(y\in\mathbb R^{Kd}\), \(z:=\bar \Theta y\in\mathbb R^{Kn}\), and
\[
\widehat Z_y^{[1]}(t):=\bar \Theta\,\widehat X_y^{[1]}(t).
\qquad 0\le t\le h,
\]
Here \(\widehat X_y^{[1]}\) is the first Picard iterate on \([0,h]\). Then on the nice event \eqref{eq:nice_events_ridge},  the following bound holds for \(h\le \tfrac{\log 2}{3 \gamma}\):
\[
\sup_{0\le t\le h}\|\widehat Z_y^{[1]}(t)-z\|
\le
e^{\|A_r\|h}
\left[
h\,\|D_r+Q_r\|
\Bigl(
\|\Theta \Theta^\top \phi'(z_1)\|+\eta\|z_1\|+\|z\|
\Bigr)
+\sqrt{2\gamma}\,C_\Theta
\right].
\]
In particular, if \(, \; \phi' \leq \tau_{\phi} \), then
\[
\sup_{0\le t\le h}\|\widehat Z_y^{[1]}(t)-z\|
\le
2\left[
h\,\|D_r+Q_r\|
\Bigl(
\sqrt{r} \Lambda_\Theta L_{\max} +\eta\|z_1\|+\|z\|
\Bigr)
+\sqrt{2\gamma}\,C_\Theta
\right].
\]
\label{lem: First_Picard_increment_bound_ridge}
\end{lemma}

\begin{proof}
For \(t\in[0,h]\), applying \(\bar \Theta\) to the first Picard equation and using
\[
\bar \Theta A= A_r\bar A,
\qquad
\bar \Theta(D+Q)=(D_r+Q_r)\bar \Theta,
\]
we obtain
\begin{align*}
\widehat Z_y^{[1]}(t)-z
&=
\int_0^t  A_r\bigl(\widehat Z_y^{[1]}(s)-z\bigr)\,ds
\;+\;
\int_0^t  A_r z\,ds \\
&\qquad
-
\int_0^t (D_r+Q_r)
\begin{bmatrix}
\Theta \Theta ^\top \phi^{'}(z) + \eta z_1\\
0\\
\vdots\\
0
\end{bmatrix}ds
\;+\;
\int_0^t \bar \Theta \sqrt{2D}\,dB_s .
\end{align*}

Taking norms and then supremum over \(0\le u\le t\) gives
\begin{align*}
    \sup_{0\le u\le t}\|\widehat Z_y^{[1]}(u)-z\|
&\le
\|A_r\|
\int_0^t
\sup_{0\le r\le s}\|\widehat Z_y^{[1]}(r)-z\|\,ds
\\
& + h\,\|D_r+Q_r\|
\Bigl(
\|\Theta \Theta ^\top \phi'(z_1)\|+\eta\|z_1\|+\|z\|
\Bigr)  +  \sqrt{2 \gamma} C_\Theta.
\end{align*}

Grönwall's inequality then yields
\[
\sup_{0\le t\le h}\|\widehat Z_y^{[1]}(t)-z\|
\le
e^{\|A_r\|h}
\left[
h\,\|D_r+Q_r\|
\Bigl(
\|\Theta \Theta ^\top \phi'(z_1)\|+\eta\|z_1\|+\|z\|
\Bigr)
+\sqrt{2\gamma}\,C_\Theta
\right].
\]
Using \(\|\Theta\Theta^\top\|_{\mathrm{op}}=\Lambda_\Theta\) and the uniform bound
\(\|\phi'(z_1)\|_2\le \sqrt r\,L_{\max}\), we obtain the claim.
\end{proof}

We now establish bounds on the projection of the discrete evolution of the algorithm across steps.
\begin{lemma}[Ridge-separable analogue of Lemma~\ref{lem:discrete_contraction}]
There exist constants \(0<h_1'\le h_1\) \eqref{eq:lem_32_h1}, depending only on the fixed problem
parameters, such that for every constant step size \(h\in[h_1',h_1]\), the
bounds used below hold on nice event \eqref{eq:nice_events_ridge}:
\begin{align}
\|\bar{\Theta }\widehat X((n+1)h)\|_{\widetilde S}^2 \lesssim r \log \left(\frac{3N}{\delta}\right).
\end{align}
\label{lem:discrete_contraction_ridge}
\end{lemma}

\begin{proof}
At each algorithmic step, we focus on the projection of the final Picard iteration $\nu$. Note that at the start of each algorithmic step we initialize the Picard iterations at the current state, i.e., set $y := \widehat X(nh)$ (For notational convenience, we omit $y$ in the proof.) Then the following relation holds:
\begin{align}
\widehat Z((n+1)h)
&:=
\bar \Theta \widehat X^{[\nu]}(h) \nonumber\\
&=
\bar \Theta y
-
\int_0^h
 (D_r+Q_r)
\begin{bmatrix}
\Theta P(s;\widehat X^{[\nu-1]})\\
\Theta \widehat X^{[\nu]}_2(s)\\
\vdots\\
\Theta\widehat X^{[\nu]}_K(s)
\end{bmatrix}\,ds +
\sqrt{2}\int_0^h \bar \Theta\sqrt D\,dB_s \nonumber\\
&=
\Biggl(
\widehat Z(nh)
-
\int_0^h
(D_r+Q_r)
\begin{bmatrix}
\Theta\nabla U(y_1)\\
\Theta y_2\\
\vdots\\
\Theta y_K
\end{bmatrix}\,ds
\Biggr) \nonumber \\
&\qquad
+
\Biggl(
-
\int_0^h
(D_r+Q_r)
\begin{bmatrix}
\Theta P(s;\widehat X^{[\nu-1]})- \Theta \nabla U(y_1)\\
\Theta \widehat X^{[\nu]}_2(s)- \Theta y_2\\
\vdots\\
\Theta \widehat X^{[\nu]}_K(s)- \Theta y_K
\end{bmatrix}\,ds
\Biggr) +
\sqrt{2}\int_0^h \bar \Theta \sqrt D\,dB_s\nonumber \\
&= \underbrace{\Biggl(
\widehat Z(nh)
+
\int_0^h
\widehat J_b
\widehat Z(nh)\,ds
\Biggr)}_{=: (\mathrm I)_{\mathrm{pr}}} + 
\underbrace{\int_0^h
-(D_r+Q_r)
\begin{bmatrix}
 \Theta \Theta^\top \phi' \\
0\\
\vdots\\
0
\end{bmatrix}\,ds}_{=: (\mathrm{II})_{\mathrm{pr}}} \nonumber\\
&+
\underbrace{\Biggl(
-
\int_0^h
(D_r+Q_r)
\begin{bmatrix}
\Theta P(s;\widehat X^{[\nu-1]})- \Theta \nabla U(y_1)\\
\Theta \widehat X^{[\nu]}_2(s)- \Theta y_2\\
\vdots\\
\Theta \widehat X^{[\nu]}_K(s)- \Theta y_K
\end{bmatrix}\,ds
\Biggr)}_{=: (\mathrm{III})_{\mathrm{pr}}}
+
\sqrt{2}\int_0^h \bar \Theta\sqrt D\,dB_s.\\
\nonumber
\end{align}

Consider these three terms separately (first term in the $\widetilde S$-norm \eqref{eq:S_tilde_contraction}):
\begin{align*}
\|(\mathrm I)\|_{\widetilde S}^2
&=\Biggl\|
   \widehat Z(nh)
+
h
\widehat J_b
\widehat Z(nh)
   \Biggr\|_{\widetilde S}^2\\
&= \|\widehat Z(nh)\|_{\widetilde S}^2
   + 2h \,\widehat Z(nh)^\top \widetilde S\widehat J_b\widehat Z(nh)
   + h^2 \|\widetilde S\|\|\widehat J_b\|^2 \|\widehat Z(nh)\|^2\\
&\le (1-2h\,C_{\widetilde S} + h^2\|\widetilde S\|\|\widetilde S^{-1}\|\|\widehat  J_b\|^2)\|\widehat Z(nh)\|_{\widetilde S}^2.
\end{align*}
For \(h \leq 
\min\!\left\{
\frac{C_{\widetilde S}}{4\bigl(\|\widetilde S\|\,\|\widetilde S^{-1}\|\,\|\widehat J_b\|^2+2C_{\widetilde S}^2\bigr)},
\;
\frac{1}{2\sqrt{\|\widetilde S\|\,\|\widetilde S^{-1}\|}\,\|\widehat J_b\|}
\right\}\) we have

\[(1 + C_{\widetilde S} h) \|\mathrm I\|_{\widetilde S}^2 \leq \left(1-\frac{C_{\widetilde S}}{2}h\right)\|\widehat Z(nh)\|_{\widetilde S}^2\]

Let us deal with the second term now:
\[
\left\|(\mathrm{II})
\right\|_{\widetilde S}
\le
\sqrt{r}h\|\widetilde S\|\|D_r+Q_r\| \Lambda_{\Theta} L_{max} = C_0 h \sqrt{r},\]
where we absorb the remaining constants in \(C_0\). We move on to the third term in $l_2$ norm.

\begin{align}
 \|(\mathrm{III})\|_2 &=  \left \| \int_{0}^{h} (D_r{+}Q_r)
      \begin{bmatrix}
        \Theta P\big(s;\,\widehat X^{[\nu-1]}\big) -  \Theta\nabla U\big(\widehat X_1(nh)\big)\\
        \Theta\widehat X^{[\nu]}_2(s) - \Theta\widehat X_2(nh)\\[-2pt]
        \vdots\\[-2pt]
        \Theta\widehat X^{[\nu]}_K(s) - \Theta\widehat X_K(nh)
      \end{bmatrix}\,\mathrm ds \right\|_2 \nonumber\\ 
      &\leq  \sum_{i=1}^M \int_{B_i} \left\| (D_r{+}Q_r)
      \begin{bmatrix}
        \Theta P_{B_i}\big(s;\,\widehat X^{[\nu-1]}\big) -  \Theta \nabla U\big(\widehat X_1(nh)\big)\\
         0\\[-2pt]
        \vdots\\[-2pt]
        0
      \end{bmatrix} \right\|_2 \,\mathrm ds \nonumber\\ 
      &\enspace \enspace\qquad+
        \int_0^h
        \left\|
        (D_r+Q_r)
        \begin{bmatrix}
        0\\
        \Theta\widehat X^{[\nu]}_2(s)-\Theta\widehat X_2(nh)\\
        \vdots\\
        \Theta\widehat X^{[\nu]}_K(s)-\Theta\widehat X_K(nh)
        \end{bmatrix}
        \right\|_2 ds \label{eq:ineq0_ridge}\\
      &\leq  h\Gamma_\phi(\Lambda_\Theta L_{\max}+\eta) \,\|D_r{+}Q_r\| \sup_{s \in [0,h]}  \big\|\Theta\widehat X^{[\nu-1]}(s) -  \Theta\widehat X(nh)\big\| \nonumber\\
      &\enspace \enspace\qquad +\; h\,\|D_r{+}Q_r\| \sup_{s \in [0,h]}\big\|  \Theta\widehat X^{[\nu]}(s) - \Theta\widehat X(nh)\big\|   \label{eq:ineq1_ridge} \\
      & \leq  \rho_{\Theta}\,\|D_r{+}Q_r\|  \sup_{s \in [0,h]}  \big\|\Theta\widehat X^{[\nu-1]}(s) -  \Theta\widehat X(nh)\big\| \nonumber\\
      &\enspace \enspace\qquad +\; h\,\|D_r{+}Q_r\| \sup_{s \in [0,h]} \big\|  \widehat \Theta X^{[\nu]}(s) - \Theta\widehat X(nh)\big\|  \label{eq:ineq2_ridge}\\     
      & \leq \rho_{\Theta}\,\|D_r{+}Q_r\| \frac{1-\rho_{\Theta}^{\nu -1}}{1-\rho_{\Theta}}\sup_{s \in [0,h]}  \big\|\widehat \Theta X^{[1]}(s) -  \Theta \widehat X(nh)\big\| \nonumber \\
      &\enspace \enspace\qquad +\; h\,\|D_r{+}Q_r\|\frac{1-\rho_{\Theta}^{\nu}}{1-\rho_{\Theta}}\sup_{s \in [0,h]} \big\|  \Theta \widehat X^{[1]}(s) - \Theta\widehat X(nh)\big\|  \label{eq:ineq3_ridge}\\
      & \leq  2\!\left[\rho_{\Theta}\,\|D_r{+}Q_r\| \frac{1-\rho_{\Theta}^{\nu-1}}{1-\rho_{\Theta}} + h\,\|D_r{+}Q_r\|\frac{1-\rho_{\Theta}^{\nu}}{1-\rho_{\Theta}}\right]\nonumber \\
      & \hspace{1cm}\times \left[
h\,\|D_r{+}Q_r\|\left((\eta+1) \|\widehat Z(nh)\| + \sqrt{r} \Lambda_\Theta \tau_{\phi} \right) + \sqrt{2\gamma}\,C_\Theta\right]. \label{eq:ineq4_ridge}
\end{align}

We now explain each inequality in detail:
\begin{itemize}
 \item \textbf{\eqref{eq:ineq0_ridge}} follows since \(P(s;X) = P_{B_i}(s;X)\) for \(s \in B_i\), which allows us to decompose the integral blockwise.
  \item \textbf{\eqref{eq:ineq1_ridge}} follows from the Lipschitz continuity of \(\phi^{'}\) in \(\Theta\nabla U\) and the definition of the interpolant \(P_{B_i}(s;\widehat X^{[\nu-1]})\). Using the identity \(\sum_j \ell_{i,j}(s)=1\), we rewrite the difference and then apply the triangle inequality along with the Lipschitz continuity of \(\phi^{'}\).
  
  \item \textbf{\eqref{eq:ineq2_ridge}} applies the definition of 
  \(\rho_{\Theta} = 2\, h\,\Gamma_\phi (\Lambda_{\Theta} L_{max} + \eta)\) used in Remark~\ref{rem:projected-geometric-picard}.
  
  \item \textbf{\eqref{eq:ineq3_ridge}} follows from the geometric convergence bound in Remark~\ref{rem:projected-geometric-picard}.
  
  \item \textbf{\eqref{eq:ineq4_ridge}} follows from Lemma~\ref{lem: First_Picard_increment_bound_ridge}, 
  which bounds the first Picard increment.
\end{itemize}

Next, we express $(\mathrm{III})$ in terms of the current projected state $\widehat Z(nh)$, noting that with $\rho_{\Theta} \leq  \tfrac{1}{2}$ we have
\begin{align*}
 \|(\mathrm{III})\|_2  &\leq 2h\,\|D_r{+}Q_r\|^2(1+\eta)\left[\rho_{\Theta}\frac{1-\rho_{\Theta}^{\nu-1}}{1-\rho_{\Theta}} + h\frac{1-\rho_{\Theta}^{\nu}}{1-\rho_{\Theta}}\right]\|\widehat Z(nh)\| \nonumber\\
 &\;+\;2h\,\|D_r{+}Q_r\|^2\left[\rho_{\Theta}\frac{1-\rho_{\Theta}^{\nu-1}}{1-\rho_{\Theta}} + h\frac{1-\rho_{\Theta}^{\nu}}{1-\rho_{\Theta}}\right]\sqrt{r} \Lambda_{\phi} \gamma_{\phi}\nonumber\\
& \qquad + 2\sqrt{2\gamma}\,C_{\Theta} \|D_r{+}Q_r\|\left[\rho_{\Theta} \, \frac{1-\rho_{\Theta}^{\nu-1}}{1-\rho_{\Theta}} + h \,\frac{1-\rho_{\Theta}^{\nu}}{1-\rho_{\Theta}}\right] \nonumber\\
&\leq 2h\,\|D_r{+}Q_r\|^2(1+\eta)(2\rho_{\Theta} +2h)\|\widehat Z(nh)\|\\
&+ 2\sqrt{r} \Lambda_{\phi} \gamma_{\phi}h\,\|D_r{+}Q_r\|^2(1+\eta)(2\rho_{\Theta} +2h)\\
 &\;+\; 2\sqrt{2\gamma}\,C_\Theta \,\|D_r{+}Q_r\|(2\rho_{\Theta} +2h)\, \qquad (\because\rho \leq 1/2)\nonumber \\
 &\leq 4h^2\,\|D_r{+}Q_r\|^2(1+\eta)(1+2(L_{\max} \Lambda_{\Theta}+1)\Gamma_{\phi})\|\widehat Z(nh)\|\\
& \;+\; 4\sqrt{r} \Lambda_{\phi} \gamma_{\phi}h^2\,\|D_r{+}Q_r\|^2(1+2(L_{\max} \Lambda_{\Theta}+1)\Gamma_{\phi})\\
 &\;+\; 4\sqrt{2\gamma}\,C_b \,\|D_r{+}Q_r\|(1+2(L_{\max} \Lambda_{\Theta}+1)\Gamma_{\phi}) h. \qquad (\because\rho = 2(L_{\max} \Lambda_{\Theta} +1) h\,\Gamma_{\phi})\nonumber \\
 &= C_1 h^2\|\widehat Z(nh)\|
   + C_2 h^2\sqrt r\,
   + C_3 C_\Theta h\, ,
\end{align*}
where we absorb the constants in \(C_1, C_2, C_3.\)
We now proceed to analyze the recursion in the $\widetilde{S}$-norm.  
Combining \eqref{eq: high_prob_1} with the bounds for terms $(\mathrm{I})$-$(\mathrm{II})$ (Converting into $\widetilde S$-norm), we obtain
\begin{align}
 \|\widehat Z((n+1)h)\|_{\widetilde{S}}^2 &  \leq  \left(1-\frac{C_{\widetilde S}}{2}h\right)\|\widehat Z(nh)\|_{\widetilde{S}}^2 + \left(3+ \frac{3}{C_{\widetilde{S}}h}\right )C_0^2 r h^2 \nonumber\\
 & \;+\; \left(3+ \frac{3}{C_{\widetilde{S}}h}\right)\|\widetilde{S}\| \left( 3C_1^2h^4 \|\widetilde S^{-1}\| \|\widehat Z(nh)\|_{\widetilde S}^2 + 3C_2^2 h^4 r + 3C_3^2 C_{\Theta}^2 h^2 \right)\nonumber \\
 & \;+\;  \left(3+ \frac{3}{C_{\widetilde{S}}h}\right)2\gamma C_\Theta^2 \|\widetilde S\| \nonumber \\
&\le
\Bigg[
1-\frac{C_{\widetilde S}}{2}h
+ \left(3+\frac{3}{C_{\widetilde S}h}\right) 3C_1^2 h^4 \|\widetilde S\|\,\|\widetilde S^{-1}\| \Bigg] \|\widehat Z(nh)\|_{\widetilde S}^2
\nonumber\\
&\quad + \left(3+\frac{3}{C_{\widetilde S}h}\right)\Bigl[
C_0^2 r h^2 + \|\widetilde S\|\bigl(3C_2^2 h^4 r + 3C_3^2 C_\Theta^2 h^2\bigr)
+ 2\gamma C_\Theta^2 \|\widetilde S\| \Bigr] \nonumber \\
& \lesssim \left(1-\frac{C_{\widetilde S}}{4}h\right)
\|\widehat Z(nh)\|_{\widetilde S}^2
\nonumber\\
&\quad+ 
C_0^2 r h
+
3\|\widetilde S\|C_2^2 h^3 r + 3\|\widetilde S\|C_3^2 C_\Theta^2 h
+
\frac{1}{h}2\gamma C_\Theta^2 \|\widetilde S\|  \nonumber
\end{align}

where the last inequality is obtained by choosing a sufficiently small stepsize \(h \in [h'_1, h_1]\) such that  
\begin{equation}
h_1 := \min\left\{
\left(
\frac{C_{\widetilde S}}
{72\,C_1^2\|\widetilde S\|\,\|\widetilde S^{-1}\|}
\right)^{1/3},
\quad
\frac{C_{\widetilde S}}
{\sqrt{72}\,C_1\,\|\widetilde S\|^{1/2}\|\widetilde S^{-1}\|^{1/2}}
\right\}, h'_1 = ch_1,
\label{eq:lem_32_h1}
\end{equation}
for some \(c \in (0,1)\). The  lower bound needed for this lemma (\( h'_1\)) will be fixed when \(h\) is
chosen in the proof of the main theorem.

Starting from the initialization $\widehat X(0)=0$, the recursion yields for all $n$,
\begin{align*}
\|\widehat Z((n+1)h)\|_{\widetilde S}^2 \lesssim   r  \log \frac{3N}{\delta}
\end{align*}

where we used the definition of \(C_{\Theta}\), together with the fact that \(h\) is bounded above and below by positive constants.
\end{proof}

Next we bound the projection of the Picard fixed point initialized at the current state $\widehat X(nh)$.

\begin{lemma}
\label{lem:high-prob-bound_fixed_point_ridge} [Ridge-separable analogue of Lemma~\ref{lem:high-prob-bound_fixed_point}]
Assume the step size condition in Lemma~\ref{lem: First_Picard_increment_bound_ridge} and ~\ref{lem:discrete_contraction_ridge}. Then the following bound holds with probability at least \(1-\delta\):
\begin{equation}
\label{eq:high-prob-bound}
\sup_{t \in [0,1]}\|\bar \Theta\widehat X^\ast_y(nh + th)\| 
\lesssim
\left[\, r \,\log\!\frac{3N}{\delta} \right]^{1/2},
\end{equation}
where $y=\widehat X(nh)$.
\end{lemma}

\begin{proof}
For notational convenience, we suppress the subscript $y$, though we emphasize that the Picard fixed point is initialized at $y=\widehat X(nh)$. Under the event $\bigcap_{n=0}^{N-1}\mathcal G_n(h,C_\Theta)$, we have
\begin{align}
\sup_{t \in [0,1]}\|\bar \Theta \widehat X^*(nh + th)\|^2 
&\leq  2\sup_{t \in [0,1]}\|\bar \Theta \widehat X^*(nh + th) - \bar \Theta \widehat X(nh)\|^2 + 2\|\bar \Theta \widehat X(nh)\|^2 \nonumber\\
&\leq  \frac{2}{(1-\rho_\Theta )^2}\sup_{t \in [0,1]}\|\bar \Theta \widehat X^{[1]}(nh + th) - \bar \Theta\widehat X(nh)\|^2 \nonumber\\
&\qquad + 2\|\bar \Theta \widehat X(nh)\|^2 \qquad \text{(by remark \eqref{rem:projected-geometric-picard})}\nonumber\\
&\lesssim (2+h^2)\|\bar \Theta \widehat X(nh)\|^2
+ h^2 r + C_\Theta^2.\qquad \text{(by Lemma~\ref{lem: First_Picard_increment_bound_ridge})}\nonumber
\end{align}
Converting $l_2$ norm into $\widetilde S$ norm and applying  Lemma~\ref{lem:discrete_contraction_ridge} we have

\begin{align}
\sup_{t \in [0,1]}\|\bar \Theta \widehat X^*(nh + th)\|^2 &\lesssim   (2+h^2) \|\widetilde S^{-1}\| \|\bar \Theta \widehat{X}(nh)\|_{\widetilde S}^2 + h^2r +  C_\Theta^2\nonumber\\
&\lesssim  (2+h^2) \|\widetilde S^{-1}\| \left( r  \log \frac{3N}{\delta} \right) + h^2 r + C_{\Theta}^2 \nonumber \\
& \lesssim  r  \log \frac{3N}{\delta}
\end{align}
where we used the definition of \(C_{\Theta}\).
\end{proof}

Finally, we state the moment bound needed to control the interpolation error for the ridge-separable case.

\begin{lemma}
\label{lem:expectaion_power2k_ridge}
Assume the step size condition in Lemma~\ref{lem: First_Picard_increment_bound_ridge} and ~\ref{lem:discrete_contraction_ridge}. Let
\[
Z_{\Theta} \;:=\; \sup_{t\in[0,1]}\bigl\|\bar \Theta \widehat X^\ast_y(nh+th)\bigr\|,
\]
for $y = \widehat X(nh).$ Then for every integer $k\ge1$,
\[
\mathbb{E}\bigl[Z_{\Theta}^{2k}\bigr]
\;\lesssim\; N r^k.
\]
\end{lemma}

\begin{proof}
For $k\ge1$,
\[
\mathbb{E}\bigl[Z_{\Theta}^{2k}\bigr]
= \int_{0}^{\infty}\mathbb{P}\bigl(Z_\Theta^{2k}>t\bigr)\,dt
= \int_{0}^{\infty}\mathbb{P}\bigl(Z_\Theta>u\bigr)\,2k\,u^{2k-1}\,du .
\]
By the tail estimate of Lemma~\ref{lem:high-prob-bound_fixed_point_ridge}, we obtain
\[
\mathbb{E}\bigl[Z_\Theta^{2k}\bigr]
\leq  6kN \int_{0}^{\infty} u^{2k-1}
    \exp\Bigl(-\frac{u^{2}}{Cr}\Bigr)\,du ,
\]
for some positive constant $C > 0$ independent of $d,r, M, N, \varepsilon$.
Let $a:=1/(C r)$. The standard integral
\(
\int_{0}^{\infty} u^{2k-1} e^{-a u^{2}}\,du
= \tfrac{1}{2} a^{-k}\Gamma(k)
\)
yields
\[
\mathbb{E}\bigl[Z_\Theta^{2k}\bigr]
\lesssim N r^k. \]
as claimed.
\end{proof}

\section{Auxiliary lemmas}
\label{Appendix-I}
\begin{lemma}\label{lemma:d_plus_q}
Let $D,Q$ be as defined in \eqref{eq:D-and-Q} with $\gamma>0$, and let $K\ge2$. Let $\|\cdot\|$ denote the operator norm (induced by Euclidean norm) for a matrix.
Then
\[
\sqrt{1+\gamma^2}\ \le\ \|D+Q\|
\;\le\;
\|D\|+\|Q\|
\;\le\;
\gamma+\max\{\,1+\gamma,\;2\gamma\,\}
\;=\;\max\{\,1+2\gamma,\;3\gamma\,\},
\]

Moreover, if
\[
J:=\operatorname{diag}(0,1,\ldots,1)\otimes I_d,
\qquad
A:=-(D+Q)J,
\]
then
\[
\sqrt{1+\gamma^2}
\;\le\;
\|A\|
=
\|(D+Q)J\|
\;\le\;
\|D+Q\|
\;\le\;
\max\{\,1+2\gamma,\;3\gamma\,\}.
\]
In particular, when \(\gamma\ge1\), \(\|A\|\le 3\gamma\) and \(\|D+Q\| \leq 3\gamma\).
\end{lemma}

\begin{proof}
By construction $D=\mathrm{diag}(0,\ldots,0,\gamma)\otimes I_d$, hence
$\|D\|=\gamma$.
Also $Q=T_\gamma\otimes I_d$, where $T_\gamma\in\mathbb{R}^{K\times K}$ is the
(skew–symmetric) tridiagonal matrix
\[
T_\gamma=
\begin{pmatrix}
0      & -1     & 0      & \cdots & 0      & 0 \\
1      & 0      & -\gamma& \ddots & \vdots & \vdots \\
0      & \gamma & 0      & \ddots & 0      & 0 \\
\vdots & \ddots & \ddots & \ddots & -\gamma& 0 \\
0      & \vdots & 0      & \gamma & 0      & -\gamma \\
0      & \vdots & 0      & \cdots & \gamma & 0
\end{pmatrix}.
\]
Kronecker structure gives $\|Q\|=\|T_\gamma\|$.
For any matrix $A$, $\|A\|\le \sqrt{\|A\|_1\|A\|_\infty}$; we now compute
$\|T_\gamma\|_1$ and $\|T_\gamma\|_\infty$. Calculating the row sums (in absolute value) of $T_\gamma$ yields: $\|T_\gamma\|_\infty=\max\{1+\gamma,\,2\gamma\}$.
By the same pattern for column sums, $\|T_\gamma\|_1=\max\{1+\gamma,\,2\gamma\}$.
Therefore
\[
\|Q\|=\|T_\gamma\|
\;\le\; \sqrt{\|T_\gamma\|_1\|T_\gamma\|_\infty}
\;=\; \max\{1+\gamma,\,2\gamma\}.
\]
Finally, by the triangle inequality,
\[
\|D+Q\|
\;\le\; \|D\|+\|Q\|
\;\le\; \gamma+\max\{1+\gamma,\,2\gamma\}
\;=\; \max\{1+2\gamma,\,3\gamma\}.
\]

For the lower bounds, note first that $\|B\otimes I_d\|=\|B\|$.
Write $D+Q=(\tilde D+\tilde Q)\otimes I_d$ with $\tilde D=\mathrm{diag}(0,\ldots,0,\gamma)$
and $\tilde Q=T_\gamma$. For any matrix $M$, $\|M\|\ge \max_j\|Me_j\|_2$.
Reading off the second column of $\tilde D+\tilde Q$ gives
\[
(\tilde D+\tilde Q)e_2=
\begin{cases}
-\,e_1+\gamma e_2,& K=2,\\
-\,e_1+\gamma e_3,& K\ge 3,
\end{cases}
\]
hence $\|(\tilde D+\tilde Q)e_2\|_2=\sqrt{1+\gamma^2}$ for all $K\ge2$, and therefore
$\|D+Q\|\ge \sqrt{1+\gamma^2}$.

Let $P:=\mathrm{diag}(0,1,\ldots,1)$. Since $\|(D+Q)P\otimes I_d\|=\|(\tilde D+\tilde Q)P\|$
and $P$ leaves column $2$ unchanged, the same column calculation yields
\[
\|(D+Q)P\otimes I_d\|
=\|(\tilde D+\tilde Q)P\|
\ \ge\ \|(\tilde D+\tilde Q)Pe_2\|_2
=\sqrt{1+\gamma^2}.
\]
\end{proof}

\begin{lemma}
\label{lem:J_b-operator}
Let
\[
b(x):=-(D+Q)\nabla H(x),
\qquad
H(x)=U(x_1)+\frac12\sum_{k=2}^K\|x_k\|^2.
\]
Assume that
\[
mI_d\preceq \nabla^2 U(x_1)\preceq LI_d,
\qquad \forall x_1\in\mathbb R^d,
\]
for some \(0<m\le L<\infty\). Then, for all \(x\in\mathbb R^{Kd}\), the Jacobian
\[
J_b(x):=\nabla b(x)=-(D+Q)\nabla^2 H(x)
\]
satisfies
\[
\|J_b(x)\|
\le
\max\{1,L\}\max\{1+2\gamma,3\gamma\}.
\]
\end{lemma}

\begin{proof}
Since
\[
\nabla^2 H(x)
=
\operatorname{diag}\!\big(\nabla^2 U(x_1),I_d,\ldots,I_d\big),
\]
we have
\[
\|\nabla^2 H(x)\|
\le
\max\{L,1\}.
\]
Therefore, by Lemma~\ref{lemma:d_plus_q},
\[
\|J_b(x)\|
=
\|(D+Q)\nabla^2 H(x)\|
\le
\|D+Q\|\,\|\nabla^2 H(x)\|
\le
\max\{1+2\gamma,3\gamma\}\max\{1,L\}.
\]
\end{proof}

\begin{lemma}\label{lem:bound-for-basis}
Let $K\ge 2$ and take uniform nodes
\[
c_j=\frac{j-1}{K-2}, \qquad j=1,\ldots,K-1,
\]
on $[0,1]$.
Let $\{\tilde\ell_j\}_{j=1}^{K-1}$ be the associated Lagrange basis and
\[
\Gamma_\phi:=\sup_{\tau\in[0,1]}\sum_{j=1}^{K-1}|\tilde\ell_j(\tau)|
\]
the Lebesgue constant. Then
\[
\Gamma_\phi \;\le\; \frac{2^{K-2}\,(K-2)^{K-2}}{(K-2)!}.
\]
\end{lemma}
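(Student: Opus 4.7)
The plan is to rescale the interval to integer nodes and then bound the numerator and denominator of each Lagrange basis polynomial separately, after which the sum collapses via the binomial theorem.

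First I would change variables by setting $t = (M-1)\tau$, so that $t \in [0,M-1]$ and the rescaled nodes are the integers $t_k = k-1$ for $k=1,\ldots,M$. Under this substitution the basis polynomial rewrites as
\[
\ell_j(\tau) \;=\; \prod_{k\neq j} \frac{\tau - c_k}{c_j - c_k} \;=\; \prod_{k\neq j} \frac{t-(k-1)}{(j-1)-(k-1)},
\]
since the factors of $1/(M-1)$ cancel between numerator and denominator.

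Next I would bound numerator and denominator. The denominator is exactly
\[
\prod_{k\neq j}\bigl|(j-1)-(k-1)\bigr| \;=\; (j-1)!\,(M-j)!,
\]
while for the numerator I would use the crude uniform estimate $|t-(k-1)| \le M-1$ valid for every $t\in[0,M-1]$ and every $k \in \{1,\ldots,M\}$, yielding
\[
\prod_{k\neq j}\bigl|t-(k-1)\bigr| \;\le\; (M-1)^{M-1}.
\]
Combining these two estimates gives
\[
|\ell_j(\tau)| \;\le\; \frac{(M-1)^{M-1}}{(j-1)!\,(M-j)!} \;=\; \frac{(M-1)^{M-1}}{(M-1)!}\,\binom{M-1}{j-1}.
\]

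Finally, summing over $j$ and invoking the binomial identity $\sum_{j=1}^{M}\binom{M-1}{j-1}=2^{M-1}$, then taking the supremum over $\tau\in[0,1]$, yields
\[
\Gamma_\phi \;\le\; \frac{(M-1)^{M-1}}{(M-1)!}\sum_{j=1}^{M}\binom{M-1}{j-1} \;=\; \frac{2^{M-1}\,(M-1)^{M-1}}{(M-1)!},
\]
as required. There is no genuine obstacle here: the only loose step is the uniform numerator bound $|t-(k-1)|\le M-1$, which is why the resulting constant is known to be pessimistic compared with the sharp asymptotic $\Gamma_\phi \sim 2^{M}/(eM\log M)$, but the stated inequality is what the proof needs and the estimate above delivers it in closed form.
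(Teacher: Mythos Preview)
Your proof is correct and is essentially the same argument as the paper's: both bound each $|\ell_j(\tau)|$ by $(M-1)^{M-1}/\big((j-1)!\,(M-j)!\big)$ via the crude numerator estimate and the exact denominator product, then sum using the binomial identity. The only cosmetic difference is that you rescale to integer nodes before bounding, whereas the paper works directly on $[0,1]$ using $|\tau-c_k|\le 1$; the two computations are equivalent.
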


\begin{proof}
Fix $\tau\in[0,1]$ and $j\in\{1,\ldots,K-1\}$. Using the product form,
\[
|\tilde\ell_j(\tau)|
=\prod_{\substack{k=1\\k\ne j}}^{K-1}
\frac{|\tau-c_k|}{|c_j-c_k|}
\le \prod_{\substack{k=1\\k\ne j}}^{K-1}
\frac{1}{|c_j-c_k|}.
\]
For the uniform grid, $|c_j-c_k|=|j-k|/(K-2)$, so
\[
\prod_{k\ne j}|c_j-c_k|
=\Big(\frac{1}{K-2}\Big)^{K-2}\prod_{k\ne j}|j-k|
=\Big(\frac{1}{K-2}\Big)^{K-2}(j-1)!\,(K-1-j)!.
\]
Hence
\[
|\tilde\ell_j(\tau)|
\le \frac{(K-2)^{K-2}}{(j-1)!\,(K-1-j)!}.
\]
Summing over $j$ and using
\[
\sum_{j=1}^{K-1} \binom{K-2}{j-1}=2^{K-2},
\]
we obtain
\[
\sum_{j=1}^{K-1} |\tilde\ell_j(\tau)|
\le \frac{(K-2)^{K-2}}{(K-2)!}\sum_{j=1}^{K-1} \binom{K-2}{j-1}
= \frac{2^{K-2}\,(K-2)^{K-2}}{(K-2)!}.
\]
Taking the supremum over $\tau\in[0,1]$ gives the claim.
\end{proof}

\begin{lemma}[Stationary moment bounds]\label{lem:StationaryMoments}
Let $H(x)=U(x_1)+\frac12\sum_{k=2}^K\|x_k\|^2$ and $\rho(dx)\propto e^{-H(x)}dx$ be the stationary law of the dynamics, with $U$ satisfying Assumption~\ref{assump:U-strong-smooth}. Let $x_\star\in\arg\min U$ and $X=(X_1, \ldots, X_K) \sim\rho$,
then:
\begin{align}
\mathbb{E}_\rho\!\big[\|X_1-x_\star\|^2\big] &\le \frac{d}{m}, \label{eq:BL-second-moment}\\
\mathbb{E}_\rho\!\big[\|\nabla U(X_1)\|^2\big] &\le L^2\,\mathbb{E}_\rho\!\big[\|X_1-x_\star\|^2\big] \;\le\; \frac{L^2}{m}\,d, \label{eq:grad-moment-both}\\
\mathbb{E}_\rho\!\big[\|X\|^2\big] = \mathbb{E}_\rho\!\big[\|X_1\|^2\big] &+ \sum_{k=2}^K\mathbb{E}_\rho\!\big[\|X_k\|^2\big]
\;\le\; \big(2\|x_\star\|^2+\tfrac{2d}{m}\big) \;+\; (K-1)\,d. \label{eq:full-state-moment}
\end{align} 
\end{lemma}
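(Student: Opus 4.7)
The plan is to exploit the product structure of $\rho$ and invoke a Gaussian integration by parts against strong convexity for the $X_1$ marginal. Under $H(x)=U(x_1)+\tfrac12\sum_{k\ge2}\|x_k\|^2$, the stationary measure factorizes as
\[
\rho(dx)\;=\;\rho_1(dx_1)\,\prod_{k=2}^K \mathcal N(0,I_d)(dx_k),\qquad \rho_1(dx_1)\propto e^{-U(x_1)}.
\]
Thus each auxiliary block $X_k$, $k\ge2$, is standard Gaussian on $\mathbb R^d$ with $\mathbb E\|X_k\|^2=d$, and the analysis reduces to bounding $\mathbb E_{\rho_1}\|X_1-x_\star\|^2$ and $\mathbb E_{\rho_1}\|\nabla U(X_1)\|^2$.

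For \eqref{eq:BL-second-moment}, I would apply integration by parts (Stein's identity) to $\rho_1$: for the smooth vector field $f(x)=x-x_\star$,
\[
\mathbb E_{\rho_1}\!\bigl[(X_1-x_\star)^\top\nabla U(X_1)\bigr]\;=\;\mathbb E_{\rho_1}[\nabla\!\cdot f]\;=\;d.
\]
Since $\nabla U(x_\star)=0$ by Assumption~\ref{assump:centered} and $U$ is $m$-strongly convex,
\[
(X_1-x_\star)^\top\nabla U(X_1)\;=\;(X_1-x_\star)^\top\!\bigl(\nabla U(X_1)-\nabla U(x_\star)\bigr)\;\ge\;m\,\|X_1-x_\star\|^2,
\]
so taking expectations yields $d\ge m\,\mathbb E_{\rho_1}\|X_1-x_\star\|^2$, which is \eqref{eq:BL-second-moment}.

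For \eqref{eq:grad-moment-both}, the $L$-smoothness bound in Assumption~\ref{assump:U-strong-smooth} together with $\nabla U(x_\star)=0$ gives the pointwise inequality $\|\nabla U(X_1)\|\le L\|X_1-x_\star\|$, so squaring and taking expectations combines with \eqref{eq:BL-second-moment}. For \eqref{eq:full-state-moment}, I would split $\mathbb E_\rho\|X\|^2=\mathbb E_{\rho_1}\|X_1\|^2+\sum_{k=2}^K\mathbb E\|X_k\|^2$, bound the first term using $\|X_1\|^2\le 2\|x_\star\|^2+2\|X_1-x_\star\|^2$ with \eqref{eq:BL-second-moment}, and use $\mathbb E\|X_k\|^2=d$ for the Gaussian blocks. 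No step is genuinely difficult here; the only subtlety is justifying the integration by parts, which follows from strong convexity (ensuring $\rho_1$ has rapidly decaying tails so that the boundary terms vanish) and is completely standard.
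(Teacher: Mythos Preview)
Your proposal is correct and follows essentially the same approach as the paper: factorize $\rho$, apply integration by parts with the vector field $x_1-x_\star$ to get $\mathbb E_{\rho_1}[\langle\nabla U(X_1),X_1-x_\star\rangle]=d$, invoke $m$-strong convexity for \eqref{eq:BL-second-moment}, then use $L$-smoothness and the Gaussian blocks for the remaining bounds. One trivial remark: $\nabla U(x_\star)=0$ follows directly from $x_\star\in\arg\min U$, not from Assumption~\ref{assump:centered}.
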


\begin{proof}
\textbf{Bound for $\mathbb{E}\|X_1-x_\star\|^2$.}
Write $\pi_U(dx_1)\propto e^{-U(x_1)}dx_1$ for the marginal stationary law of $X_1$; under $\rho$ we have the factorization
$\rho(dx)=\pi_U(dx_1)\otimes\big(\otimes_{k=2}^K\mathcal N(0,I_d)\big)$.
Because $U$ is $m$-strongly convex, $U(x_1)\ge U(x_\star)+\tfrac{m}{2}\|x_1-x_\star\|^2$, so $e^{-U}$ has Gaussian tails. Thus, for $g(x_1)=x_1-x_\star$ (whose divergence is $\nabla\!\cdot g \equiv d$), integration by parts yields
\[
\int_{\mathbb{R}^d} \!\!\langle \nabla U(x_1),\,x_1-x_\star\rangle\,e^{-U(x_1)}\,dx_1
= \int_{\mathbb{R}^d} \!\! \nabla\!\cdot g(x_1)\,e^{-U(x_1)}\,dx_1
= d \int_{\mathbb{R}^d}\! e^{-U(x_1)}\,dx_1,
\]
where the boundary term vanishes since $e^{-U}$ has tails. Normalizing gives
\[
\mathbb{E}_{\pi_U}\!\big[\langle \nabla U(X_1),\,X_1-x_\star\rangle\big]=d.
\]
By strong convexity and $\nabla U(x_\star)=0$,
$\langle \nabla U(x_1),x_1-x_\star\rangle
\ge m\|x_1-x_\star\|^2$. Taking expectation,
\(
d \ge m\,\mathbb{E}_{\pi_U}\|X_1-x_\star\|^2.
\)
Since the $X_1$-marginal of $\rho$ is $\pi_U$, the same bound holds under $\rho$, proving \eqref{eq:BL-second-moment}.

\textbf{Bound for $\mathbb{E}\|\nabla U(X_1)\|^2$.}
First, by $L$-smoothness and $\nabla U(x_\star)=0$,
\(
\|\nabla U(x_1)\|\le L\|x_1-x_\star\|.
\)
Squaring and taking expectations then using \eqref{eq:BL-second-moment} gives
\(
\mathbb{E}\|\nabla U(X_1)\|^2 \le (L^2/m)\,d.
\)

\textbf{Bound for $\mathbb{E}\|X\|^2$.}
Under $\rho$, $X_k\sim\mathcal N(0,I_d)$ for $k\ge2$, hence
$\mathbb{E}\|X_k\|^2=d$. For $X_1$,
\(
\|X_1\|^2=\|X_1-x_\star+x_\star\|^2
\le 2\|X_1-x_\star\|^2+2\|x_\star\|^2.
\)
Taking expectations and applying \eqref{eq:BL-second-moment} gives
\(
\mathbb{E}\|X_1\|^2 \le 2(d/m)+2\|x_\star\|^2.
\)
\end{proof}

\end{document}